\theoremstyle{definition}
\newtheorem{Thm}{Theorem}[section]
\newtheorem{Def}[Thm]{Definition}
\newtheorem{Lem}[Thm]{Lemma}
\newtheorem{Cor}[Thm]{Corollary}
\newtheorem{Ex}[Thm]{Example}
\newtheorem{Prop}[Thm]{Proposition}
\newtheorem{Rem}[Thm]{Remark}
\newtheorem{Not}[Thm]{Notation}
\def\Ker{\mathrm{Ker}}
\def\H{\mathbb{H}}
\def\K{\mathbb{K}}
\def\N{\mathbb{N}}
\def\area{\mathrm{Area^{rel}}}
\title{Notes on relatively hyperbolic groups and relatively quasiconvex subgroups}
\author{
Yoshifumi Matsuda\footnote
{Graduate School of Mathematical Sciences, 
University of Tokyo, 
3-8-1 Komaba, 
Meguro-ku, 
Tokyo, 
153-8914 Japan,
ymatsuda@ms.u-tokyo.ac.jp} \footnote{
The author is supported by the Global COE Program at Graduate School of Mathematical Sciences, the University of Tokyo, and Grant-in-Aid for Scientific Researches for Young Scientists (B) (No. 22740034), Japan Society of Promotion of Science.}, 
Shin-ichi Oguni\footnote
{Department of Mathematics, Faculty of Science, Ehime University,
2-5 Bunkyo-cho, 
Matsuyama, 
Ehime, 
790-8577 Japan,
oguni@math.sci.ehime-u.ac.jp} \footnote{
The author is supported by Grant-in-Aid for Scientific Researches for Young Scientists (B) 
(No. 24740045), Japan Society of Promotion of Science.}, 
Saeko Yamagata\footnote
{Faculty of Education and Human Sciences, Yokohama National University,
240-8501 Yokohama, Japan,
yamagata@ynu.ac.jp}
}
\date{}
\begin{document}

\maketitle
\begin{abstract}
We define relatively quasiconvex subgroups 
of relatively hyperbolic groups in the sense of Osin 
and show that such subgroups have expected properties. 
Also we state several definitions equivalent to 
the definition of relatively hyperbolic groups 
in the sense of Osin.\\

\noindent
Keywords:
Relatively hyperbolic groups; 
relatively quasiconvex subgroups; 
relatively undistorted subgroups \\

\noindent
2010MSC:
20F67;
20F65
\end{abstract}

\section{Introduction}
The notion of a relatively hyperbolic group 
was introduced in \cite{Gro87} and has been studied by many authors
(see for example \cite{Bow12} and \cite{Far98}).
In this note we adopt a definition of a relatively hyperbolic group by Osin 
(\cite[Definition 2.35]{Osi06a}): 
a group $G$ is hyperbolic relative to a family of subgroups $\H$ if 
$G$ is finitely presented relative to $\H$ and the relative Dehn function is linear.
From now on, unless otherwise stated, 
relatively hyperbolic groups mean those in the sense of Osin. 
The definition is equivalent to other several definitions of a relatively hyperbolic group
if $G$ is countable and $\H$ is a finite family (see for example \cite[Section 3]{Hru10}). 
Also a relatively quasiconvex subgroup of a relatively hyperbolic group is defined by several 
equivalent ways if $G$ is countable and $\H$ is a finite family 
(see for example \cite[Section 6]{Hru10}). 
The main aim of this note is to give an appropriate 
definition of a relatively quasiconvex subgroup of 
a relatively hyperbolic group, which allows the case where $G$ is uncountable
and $\H$ is an infinite family. 
Also we give several equivalent definitions of relatively hyperbolic groups. 

Throughout this note, 
$G$ is a group, $\H$ is a family of non-trivial subgroups of $G$
and $L$ is a subgroup of $G$ 
($\H$ may be empty and two elements of $\H$ may coincide as a subgroup of $G$).

Now we define relatively quasiconvex subgroups of relatively hyperbolic groups. 
The following technical definition is essential. 
\begin{Def}\label{**}
We say that $L$ satisfies {\it Condition $(b)$} with respect to $\H$ in $G$ 
if for any $y,y'\in G$ such that $Ly\neq Ly'$, $\H_{L,y,y'}$ is a finite subset of $\H$. 
Here 
\[
\H_{L,y,y'}:=\{H\in\H \ |\  L\cap yHy'^{-1}\neq \emptyset\}.
\]
\end{Def}

\begin{Def}\label{rqc}
Suppose that 
$G$ is hyperbolic relative to $\H$.
The subgroup $L$ is said to be {\it pre-quasiconvex relative to} $\H$ in $G$ with respect to 
a finite relative generating system $X$ of $(G,\H)$
if there exists a finite subset $Y\subset G$ satisfying the following:
for any mutually different $l_1, l_2\in L$ and 
any geodesic $c$ in the relative Cayley graph $\overline{\Gamma}(G,\H,X)$ from $l_1$ to $l_2$, 
all vertices on $c$ are contained in $L\cup LY$.
Here $LY:=\{g\in G \ |\ g=ly, l\in L, y\in Y\}$.

The subgroup $L$ is said to be {\it pre-quasiconvex relative to} $\H$ in $G$ if 
$L$ is pre-quasiconvex relative to $\H$ in $G$ with respect to 
some finite relative generating system $X$.

The subgroup $L$ is said to be {\it quasiconvex relative to} $\H$ in $G$ if
$L$ is pre-quasiconvex relative to $\H$ in $G$ and 
satisfies Condition $(b)$ with respect to $\H$ in $G$.

The subgroup $L$ is said to be {\it strongly quasiconvex relative to} $\H$ in $G$ if
$L$ is quasiconvex relative to $\H$ in $G$
and for any $g\in G$, there exists a finite subset $\H'$ of $\H$ such that 
any $H_1\in \H$ and any $H_2\in \H\setminus \H'$, 
$L\cap gH_1g^{-1}$ is finite and $L\cap gH_2g^{-1}$ is trivial.
\end{Def}
\noindent 
We remark that $L$ is (pre-)quasiconvex (resp. strongly quasiconvex) relative to $\H$ in $G$ 
with respect to every finite relative generating system 
when $L$ is (pre-)quasiconvex (resp. strongly quasiconvex) relative to $\H$ in $G$ 
(see Corollary \ref{choice'''}).

We note that $L$ satisfies Condition $(b)$ with respect to $\H$ in $G$ 
whenever $\H$ is finite. 
Osin (\cite[Definition 4.9]{Osi06a}) originally introduced 
relative quasiconvexity by using a word metric 
when $G$ is finitely generated. We remark that $\H$ consists of finitely many elements
when $G$ is finitely generated (\cite[Corollary 2.48]{Osi06a}). 
Hruska (\cite[Definition 6.10]{Hru10}) extended the notion of relative quasiconvexity 
to the case where $\H$ is finite and $G$ is countable 
by using a left invariant proper metric on $G$. 
Our notion of relative quasiconvexity extends theirs to the case where 
$G$ and $\H$ are general. 

The following are main theorems.
\begin{Thm}\label{cap}
Let $K$ be a subgroup of $G$. 
Suppose that $G$ is hyperbolic relative to $\H$. 
If both $K$ and $L$ are quasiconvex relative to $\H$ in $G$, 
then $K\cap L$ is also quasiconvex relative to $\H$ in $G$. 
\end{Thm}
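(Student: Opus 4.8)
The plan is to verify the two defining properties of relative quasiconvexity for $K\cap L$ separately: pre-quasiconvexity with respect to a single finite relative generating system, and Condition $(b)$. First I would fix one finite relative generating system $X$ of $(G,\H)$ and invoke Corollary \ref{choice'''} so that both $K$ and $L$ are pre-quasiconvex relative to $\H$ with respect to this common $X$; let $Y_K$ and $Y_L$ be the associated finite subsets of $G$, and set $Y_K^+:=Y_K\cup\{1\}$ and $Y_L^+:=Y_L\cup\{1\}$, so that the two pre-quasiconvexity statements read ``every vertex lies in $KY_K^+$'' and ``every vertex lies in $LY_L^+$''. For each pair $(a,b)\in Y_K^+\times Y_L^+$ with $Ka\cap Lb\neq\emptyset$ I pick one representative $v_{a,b}\in Ka\cap Lb$, and let $Y$ be the resulting finite collection of representatives.

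For the pre-quasiconvexity of $K\cap L$, take mutually different $m_1,m_2\in K\cap L$ and a geodesic $c$ in $\overline{\Gamma}(G,\H,X)$ from $m_1$ to $m_2$. The key observation is that this single $c$ simultaneously witnesses pre-quasiconvexity of both subgroups: since $m_1,m_2\in K$, every vertex $v$ of $c$ lies in $KY_K^+$, and since $m_1,m_2\in L$, the same $v$ lies in $LY_L^+$. Hence $v\in Ka\cap Lb$ for some $(a,b)\in Y_K^+\times Y_L^+$, so $v_{a,b}$ exists, and a short computation (writing $v=ka=lb$ and $v_{a,b}=k'a=l'b$ gives $v\,v_{a,b}^{-1}=kk'^{-1}\in K$ and $v\,v_{a,b}^{-1}=ll'^{-1}\in L$) shows $v\,v_{a,b}^{-1}\in K\cap L$, i.e.\ $v\in(K\cap L)v_{a,b}\subseteq(K\cap L)Y$. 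This is exactly pre-quasiconvexity of $K\cap L$ relative to $\H$ in $G$ with the finite set $Y$.

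For Condition $(b)$, given $y,y'\in G$ with $(K\cap L)y\neq(K\cap L)y'$, the element $y(y')^{-1}$ fails to lie in $K\cap L$, hence fails to lie in $K$ or fails to lie in $L$; thus $Ky\neq Ky'$ or $Ly\neq Ly'$. In the first case Condition $(b)$ for $K$ says $\H_{K,y,y'}$ is finite, and the inclusion $K\cap L\subseteq K$ gives $\H_{K\cap L,y,y'}\subseteq\H_{K,y,y'}$ (if $(K\cap L)\cap y H(y')^{-1}\neq\emptyset$ then a fortiori $K\cap yH(y')^{-1}\neq\emptyset$), so $\H_{K\cap L,y,y'}$ is finite; the second case is symmetric, using $L$. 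Combining the two parts shows $K\cap L$ is quasiconvex relative to $\H$ in $G$.

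I expect the only genuine subtlety to be the reduction to a common generating system $X$ via Corollary \ref{choice'''} together with the emphasis that one geodesic can be used for both $K$ and $L$; once this is in place, the coset-intersection bookkeeping and the inclusion argument for Condition $(b)$ are routine. This mirrors the classical fact that an intersection of quasiconvex subgroups of a hyperbolic group is quasiconvex, with the relative refinement Condition $(b)$ inherited by the elementary containment of the associated families.
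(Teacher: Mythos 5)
Your proposal is correct and follows essentially the same route as the paper: the paper also fixes one finite relative generating system, uses that a single geodesic between two points of $K\cap L$ simultaneously witnesses pre-quasiconvexity of $K$ and of $L$, and then applies a coset-intersection lemma (Lemma \ref{teq}, whose proof is exactly your representative-picking computation $v\,v_{a,b}^{-1}\in K\cap L$) together with a separate lemma (Lemma \ref{cap'}) showing Condition $(b)$ passes to intersections via the same dichotomy $Ky\neq Ky'$ or $Ly\neq Ly'$. The only difference is that you inline the two auxiliary lemmas rather than citing them.
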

\noindent 
This was known for the case where $\H$ is finite and $G$ is countable
(see \cite[Theorem 1.1]{Osi06a} and \cite[Theorem 1.2 (2)]{Hru10}).
  
\begin{Thm}\label{qc-undist}
Suppose that $G$ is hyperbolic relative to $\H$ in $G$.
Then we have the following. 
\begin{enumerate}
\item The subgroup $L$ is quasiconvex relative to $\H$ in $G$ if and only if  
$L$ is undistorted relative to $\H$.
\item The subgroup $L$ is strongly quasiconvex relative to $\H$ in $G$ if and only if  
$L$ is strongly undistorted relative to $\H$ in $G$.
In particular if $L$ is strongly quasiconvex relative to $\H$ in $G$, 
then $L$ is hyperbolic. 
\end{enumerate} 
\end{Thm}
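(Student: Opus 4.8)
The plan is to treat both parts by comparing the relative word metric on $G$ restricted to $L$ with an intrinsic relative metric on $L$, where $L$ carries the peripheral family $\H_L$ of infinite intersections $L\cap gHg^{-1}$ induced from $\H$, and recalling that relative undistortion asserts that the inclusion $(L,\H_L)\hookrightarrow(G,\H)$ is a relative quasi-isometric embedding. The first observation is that Condition $(b)$ is exactly the combinatorial input that makes $\H_L$ usable: for cosets $Ly\neq Ly'$ only finitely many $H\in\H$ satisfy $L\cap yHy'^{-1}\neq\emptyset$, so when we read off a relative geodesic of $G$ between two points of $L$, only finitely many peripheral cosets can contribute to any bounded portion of it. I would first record this reformulation and fix, by Corollary \ref{choice'''}, a single finite relative generating system $X$, so that the entire argument takes place in the hyperbolic space $\overline{\Gamma}(G,\H,X)$.

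For part (i) I would prove the two implications separately. Assume first that $L$ is quasiconvex relative to $\H$ in $G$. Pre-quasiconvexity gives a finite $Y\subset G$ with every relative geodesic between points of $L$ contained in $L\cup LY$; pushing such a geodesic into $L$, by replacing each excursion into a peripheral coset $gH$ with the corresponding element of $L\cap gHg^{-1}$ (which is controlled by $Y$), produces a relative quasi-geodesic in $(L,\H_L)$ with the same endpoints, and this yields the lower distortion bound showing that the inclusion is a relative quasi-isometric embedding, while Condition $(b)$ simultaneously guarantees that $\H_L$ is a legitimate peripheral family. Conversely, if $L$ is undistorted relative to $\H$, then a relative geodesic $c$ of $G$ between $l_1,l_2\in L$ and a relative quasi-geodesic in $L$ joining the same pair fellow-travel, by stability of quasi-geodesics in $\overline{\Gamma}(G,\H,X)$; hence $c$ lies in a bounded relative neighbourhood of $L$. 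The delicate point is then to upgrade ``bounded relative neighbourhood of $L$'' to ``contained in $L\cup LY$ for a single finite $Y$'': a priori a bounded relative neighbourhood can meet infinitely many peripheral cosets, and it is precisely Condition $(b)$, which the definition of relative undistortion is designed to encode, that confines the vertices of $c$ to finitely many coset representatives and delivers the required finite $Y$.

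For part (ii) I would run the same comparison, now exploiting the extra hypothesis in the strong case. Strong quasiconvexity forces every intersection $L\cap gH_1g^{-1}$ to be finite and, for each fixed $g$, all but finitely many to be trivial, so the induced family $\H_L$ consists only of finite subgroups; under the identification of the previous paragraph this matches the condition of $L$ being strongly undistorted relative to $\H$ in $G$, giving the stated equivalence. The ``in particular'' statement then follows at once: a group that is hyperbolic relative to a family of finite subgroups is hyperbolic in the ordinary sense, so once $L$ is seen to be relatively hyperbolic (as a relatively undistorted, hence relatively quasiconvex, subgroup) and to carry only finite peripheral subgroups, $L$ is hyperbolic.

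I expect the main obstacle to be the converse direction of (i), specifically the passage from a bounded relative neighbourhood of $L$ to a finite set $Y$ with $c\subset L\cup LY$. In the finitely generated or countable settings of Osin and Hruska one can hide this behind a proper left-invariant metric, but since here $G$ may be uncountable and $\H$ infinite no such metric is available, and the finiteness of $Y$ must be extracted combinatorially from Condition $(b)$ together with the local structure of relative geodesics in $\overline{\Gamma}(G,\H,X)$.
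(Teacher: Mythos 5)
Your overall strategy coincides with the paper's: both directions of (i) are run through a characterization of relative undistortion in terms of quasigeodesics from $1$ to $l$ confined to $L\cup LY$ (this is the paper's Proposition \ref{rundqc}), combined with stability of quasigeodesics without backtracking in $\overline{\Gamma}(G,\H,X)$ (Proposition \ref{k-similar}); and part (ii) is, as you indicate, a bookkeeping argument about the induced family $\H_{L,Y}$ consisting of finitely many finite subgroups, which matches the paper's absorption of $\mathcal H_{L,Y}$ into a finite generating system of $L$ for one direction and, for the converse, a contradiction via Lemmas \ref{choice} and \ref{2''} showing that an infinite intersection $L\cap gHg^{-1}$, or infinitely many nontrivial ones, would distort the metric.

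However, at the step you yourself single out as delicate --- upgrading ``$c$ lies in a bounded relative neighbourhood of $L$'' to ``$c\subset L\cup LY$ for a single finite $Y$'' --- you propose the wrong tool. Condition $(b)$ cannot do this job: it only says that for fixed $y,y'$ with $Ly\neq Ly'$ the set of $H\in\H$ with $yH\cap Ly'\neq\emptyset$ is finite, and it gives no control over how many right cosets of $L$ a ball of bounded radius in $d_{X\sqcup\mathcal H}$ can meet; such a ball contains entire peripheral cosets and typically meets infinitely many $L$-cosets. What actually delivers the finiteness in the paper is Proposition \ref{k-similar}: for quasigeodesics without backtracking that are $k$-similar, the phase vertices fellow-travel in the word metric $d_Y$ of a \emph{finite} relative generating system $Y$, and a $d_Y$-ball of bounded radius is a finite set. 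This BCP-type strengthening of ordinary quasigeodesic stability --- not Condition $(b)$ --- is the ingredient that confines the vertices of a geodesic between $l_1,l_2\in L$ to $L\cup LY_1$ for a finite $Y_1$; Condition $(b)$ enters only as the second half of the definition of relative quasiconvexity and of the characterization in Proposition \ref{rundqc}. With that substitution your outline becomes the paper's proof.
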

\noindent
See Definition \ref{rund} for (strongly) relatively undistorted subgroups. 
Theorem \ref{qc-undist} (ii) was known if $G$ is finitely generated 
(see \cite[Theorems 1.9 and 1.10]{Osi06a}).
Also the part of `only if' in (i) is known if $\H$ is finite and $G$ is countable
(see \cite[Theorem 10.1]{Hru10} and also \cite[Lemma 2.22]{Yan12}). 

\begin{Thm}\label{4}
Suppose that $G$ is hyperbolic relative to $\H$ in $G$
and $L$ is quasiconvex relative to $\H$ in $G$. 
Then $L$ is naturally relatively hyperbolic. 
\end{Thm}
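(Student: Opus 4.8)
The plan is to exhibit a natural peripheral structure on $L$ and verify Osin's definition directly for it. The natural family is
\[
\H_L:=\{L\cap gHg^{-1}\ |\ H\in\H,\ g\in G,\ L\cap gHg^{-1}\ \text{is infinite}\},
\]
where one representative is chosen from each $L$-conjugacy class; Condition $(b)$ is exactly what keeps this indexing under control, since for a fixed pair of cosets only finitely many $H\in\H$ contribute, so the intersections cannot accumulate pathologically even though $\H$ may be infinite. Note that when $L$ is strongly quasiconvex the extra hypothesis in Definition \ref{rqc} forces every infinite intersection to be trivial, so $\H_L$ disappears and one recovers the assertion that $L$ is hyperbolic, consistently with Theorem \ref{qc-undist}(ii). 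To prove that $L$ is hyperbolic relative to $\H_L$ I would produce a finite relative generating system $X_L$ and a finite relative presentation of $(L,\H_L)$ whose relative Dehn function is linear.

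First I would build $X_L$. Fix a finite relative generating system $X$ of $(G,\H)$ and the finite set $Y\subset G$ provided by pre-quasiconvexity with respect to $X$. Given $l\in L$, take a geodesic in $\overline{\Gamma}(G,\H,X)$ from $1$ to $l$; by Definition \ref{rqc} all its vertices lie in $L\cup LY$, and each edge is labelled either by a letter of $X$ or by an element of some $H\in\H$. Reading this path and projecting its vertices back into $L$ converts it into a word over $X_L\cup(\bigsqcup_{\H_L})$, where $X_L$ is assembled from $L\cap X$, from the finitely many elements needed to pass between $L$ and $LY$, and from the corrections produced by the peripheral excursions. This shows that $L$ is finitely generated relative to $\H_L$; essentially the same bookkeeping, applied to the finitely many relators of $(G,\H)$ and to the finite set $Y$, yields a finite relative presentation.

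The core of the argument is the linear relative isoperimetric inequality, and here the mechanism is a relative quasi-isometric embedding. Using quasiconvexity together with Theorem \ref{qc-undist} (so that $L$ is undistorted), the orbit inclusion should extend to a quasi-isometric embedding $\overline{\Gamma}(L,\H_L,X_L)\to\overline{\Gamma}(G,\H,X)$ that moreover carries each peripheral coset of $\H_L$ to within bounded distance of a peripheral coset of $\H$. Given a word $w$ over the $L$-alphabet representing $1$ in $L$, I would view it as a loop in $\overline{\Gamma}(G,\H,X)$, apply the linear relative Dehn inequality of $(G,\H)$ to bound $\area(w)$ by $C|w|$, and then transport the resulting relative van Kampen diagram back across the embedding: each relator cell and each peripheral component over $\H$ is replaced by boundedly many cells and peripheral components over $\H_L$. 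This produces a relative van Kampen diagram over the presentation of $(L,\H_L)$ with $\areaq(w)\le C'|w|$, which is the required linear relative Dehn function.

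The main obstacle is precisely this transport step, and two points are delicate. First, one must show that the embedding is genuinely relative, i.e. that a long excursion of a $G$-geodesic into an $\H$-coset whose endpoints lie near $L$ in fact shadows a single coset of some $L\cap gHg^{-1}\in\H_L$; this is where quasiconvexity and the malnormality-type finiteness of Condition $(b)$ combine to prevent an excursion from drifting between different peripheral subgroups. Second, one must keep the cell count linear while rewriting boundary words from the $G$-alphabet to the $L$-alphabet, which requires uniform bounds, depending only on $X$, $Y$ and the finite relator set, on how many $L$-cells are needed to fill the image of one $G$-cell. Once these uniform bounds are in place the linear estimate follows, and $L$ is hyperbolic relative to $\H_L$ in the sense of Osin.
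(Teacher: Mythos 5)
There is a genuine gap at the central step, the ``transport'' of relative van Kampen diagrams across the quasi-isometric embedding. A diagram filling a loop $w\subset L$ inside $\overline{\Gamma}(G,\H,X)$ has interior cells whose vertices need not lie anywhere near $L\cup LY$, so there is no reason why ``each relator cell and each peripheral component over $\H$ is replaced by boundedly many cells and peripheral components over $\H_L$.'' To make this work you would need a coarsely Lipschitz retraction of $\overline{\Gamma}(G,\H,X)$ onto $L$ that is moreover compatible with the peripheral structure (sending each coset $gH$ met by the diagram to within bounded distance of a single coset of some $L\cap g'Hg'^{-1}$, with filling area controlled by boundary length rather than by a constant); none of this is constructed, and it is exactly where the difficulty lives. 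There is also a circularity in your choice of peripheral structure: taking $\H_L$ to range over all $g\in G$, you cannot know that $L$ is finitely generated relative to $\H_L$ (your geodesic-reading argument only produces the subgroups $L\cap yHy^{-1}$ for $y$ in the finite set $Y$), and the fact that every infinite intersection $L\cap gHg^{-1}$ is $L$-conjugate into this finite list is in the paper a \emph{corollary} of the theorem (proved only for $\H$ finite), not an available input.

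The paper sidesteps the isoperimetric inequality entirely by invoking the equivalence of Osin's definition with condition (i) of Proposition \ref{relhypeq}: it suffices to show that $\widehat{\Gamma}(L,\H_{L,Y}^r,S)$ is hyperbolic, fine, and that $(L,\H_{L,Y}^r,S)$ satisfies Condition $(a)$. Hyperbolicity and Condition $(a)$ come from Theorem \ref{qc-undist} together with Lemmas \ref{red'} and \ref{2''}. The real new content is fineness, proved by an explicit injective correspondence $\iota$ sending vertices, edges and hence circuits of $\widehat{\Gamma}(L,\H_{L,Y}^r,S)$ to circuits of at most twice the length in the fine graph $\widehat{\Gamma}(G,\H,X\sqcup Y\sqcup Y^{-1}\sqcup S)$; injectivity on circuits, which uses that the elements of $Y$ lie in distinct right cosets of $L$ and the definition of $\H_{L,Y}^r$, is what bounds the number of short circuits through a given edge. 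Nothing corresponding to this fineness argument appears in your proposal, so even granting the quasi-isometric embedding you establish only hyperbolicity of the coned-off graph, which by Proposition \ref{relhypeq} is strictly weaker than relative hyperbolicity of $(L,\H_{L,Y}^r)$.
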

\noindent
See Theorem \ref{4'} for a more precise statement.
This was known for the case where $\H$ is finite and $G$ is countable
(see \cite[Theorem 1.2 (1)]{Hru10}, \cite[Theorem 1.8]{MP-W11a} and \cite[Lemma 2.27]{Yan12}).

\vspace{1cm}
In Section \ref{pre}, some terminologies 
related to relatively hyperbolic groups are recalled. 
Also we introduce Condition $(a)$ for $(G,\H)$ 
in order to state equivalent definitions of relatively hyperbolic groups.
In Section \ref{geometry}, 
we give several equivalent definitions of relatively hyperbolic groups 
(Proposition \ref{relhypeq}).  
In Section \ref{relqcdef}, 
relatively undistorted subgroups and relatively quasiconvex subgroups
are studied. In particular, 
Theorems \ref{cap}, \ref{qc-undist} and \ref{4} are shown.  

\section{Preliminaries}\label{pre}

We recall definitions and properties of 
relative generating systems, relative presentations, 
relative Cayley graphs and coned-off Cayley graphs
in Sections 2.1 and 2.2. 
We mainly refer to \cite{Osi06a} (see also \cite{Far98}). 
In Section 2.3, we introduce Condition $(a)$ in order to
state equivalent definitions of relatively hyperbolic groups
in Proposition \ref{relhypeq}.

\subsection{Relative generating systems and relative presentations}

For any set $X$, $F(X)$ denotes a free group generated by $X$. 
When we take a map $\psi_0 \colon X\to G$, 
we regard the set $X$ as a family of elements of $G$ by $\psi_0$
if there is no confusion. 
Note that two distinct elements of $X$ can be the same element of $G$. 
We call $\psi_0 \colon X\to G$ or $X$ a {\it system} of $G$.
A system $\psi_0 \colon X\to G$ is said to be {\it symmetric} if $X$ is endowed with an involution 
$X\ni x\mapsto x^{-1}\in X$ such that $\psi_0(x^{-1})=\psi_0(x)^{-1}$ for every $x\in X$.
A system $\psi_0$ induces the natural homomorphism $\psi_1 \colon F(X) \to G$.  
If $\psi_1 \colon F(X) \to G$ is surjective, then $\psi_0 \colon X\to G$ or $X$ is called 
a {\it generating system} of $G$. 
 
We denote by $\ast_{H\in\H} H$ the free product of all elements of $\H$. 
The inclusion from $H$ to $G$ for every $H\in \H$ 
induces the natural homomorphism $\psi_2 \colon \ast_{H\in \H}H \to G$. 
Let us denote the free product of $F(X)$ for a system $X$ and $\ast_{H\in\H}H$ 
by $F(X) \ast \left(\ast_{H\in\H}H\right)$ and set $F=F(X) \ast \left(\ast_{H\in\H}H\right)$. 
We have the natural homomorphism $\psi\colon F\to G$ induced by $\psi_1$ and $\psi_2$. 
If $\psi$ is surjective, 
then $\psi_0 \colon X \to G$ or $X$ is called 
a {\it relative generating system} of $(G,\H)$. 
If $(G,\H)$ has a finite relative generating system, 
then we say that $G$ is {\it finitely generated relative to} $\H$ or 
$(G,\H)$ is {\it relatively finitely generated}. 
When we put $\mathcal H=\bigsqcup_{H\in\H}(H\setminus \{1\})$, 
$X$ is a relative generating system of $(G,\H)$ 
if and only if $X\sqcup \mathcal H$ is a generating system of $G$.
Since $H\setminus \{1\}$ has the natural involution induced by inverse on $G$, 
$\mathcal H$ has the natural involution. 
If a relative generating system $X$ of $(G,\H)$ is symmetric, 
then $X\sqcup \mathcal H$ is symmetric. 
Throughout this note, we assume that any system is symmetric.  

We fix a generating system $X$ of $G$ relative to $\H$. 
Let us denote by $\left(X \sqcup \mathcal H\right)^\ast$ 
the free monoid generated by $X \sqcup \mathcal H$, 
that is, the set of words over $X \sqcup \mathcal H$. 
Then $\left(X \sqcup \mathcal H\right)^\ast$ has the natural involution. 
For two words $W_1, W_2 \in \left(X \sqcup \mathcal H\right)^\ast$, 
$W_1=_F W_2$ (resp. $W_1=_G W_2$) means that $W_1$ and $W_2$ represent 
the same element of $F$ (resp. $G$). 
For each word $W \in \left(X \sqcup \mathcal H\right)^\ast$, 
$\overline W$ denotes the element of $G$ represented by $W$
and $\| W \|$ means the word length of $W$. 
Note that $\left(X \sqcup \mathcal H\right)^\ast\ni W\mapsto \overline W\in G$ is surjective. 
A word $W\in \left(X \sqcup \mathcal H\right)^\ast$ is said to be {\it reduced} 
if $\|W \|\le \|W' \|$ for any word $W'\in \left(X \sqcup \mathcal H\right)^\ast$ 
such that $W=_F W'$. 
For any word $W'\in \left(X \sqcup \mathcal H\right)^\ast$, 
there exists the unique reduced word $W\in \left(X \sqcup \mathcal H\right)^\ast$ such that $W'=_F W$.  

For a subset $\mathcal R$ of $\left(X \sqcup \mathcal H\right)^\ast$, 
if the normal closure of the image of $\mathcal R$ in $F$ is equal to $\Ker \ \psi$, 
then the pair $(X,\mathcal R)$ is called a {\it relative presentation} of $(G,\H)$. 
Without loss of generality, 
we assume that $\mathcal R$ is reduced in the sense of \cite[Definition 2.24]{Osi06a}, 
that is, every $R\in \mathcal R$ is a reduced word in $\left(X \sqcup \mathcal H\right)^\ast$ 
and for each $R \in \mathcal R$, 
$\mathcal R$ contains $R^{-1}$ and all cyclic shifts of $R$ and $R^{-1}$. 
If both $X$ and $\mathcal R$ are finite, 
then a relative presentation $(X,\mathcal R)$ is said to be {\it finite}, 
and we say that $(G,\H)$ is {\it relatively finitely presented} 
or $G$ is {\it finitely presented relative to} $\H$. 
Throughout this note, 
any relative presentation is assumed to be reduced. 

We fix a finite relative presentation $(X,\mathcal R)$ of $(G,\H)$. 
For each $H\in \H$, we define $\Omega_H$ as the set of all elements of $H\setminus \{1\}$ 
such that each of them is a letter of some word $R\in \mathcal R$. 
Put $\Omega=\bigsqcup_{H\in\H}\Omega_H \subset \mathcal H$. 
Since $\mathcal R$ is reduced, 
$\Omega_H$ has the involution by inverse in $H$. 
Hence $\Omega$ also has the involution. 
Since $\mathcal R$ is finite, $\Omega$ is also finite. 
 
Let $W$ be any word of $\left(X \sqcup \mathcal H\right)^\ast$ 
with $\|W\|\leq n$ and $\overline W=1_G$. 
Then there exist $\{f_i\in \left(X \sqcup \mathcal H\right)^\ast\}_{i=1,\ldots,k}$ 
and $\{R_i\in \mathcal R\}_{i=1,\ldots, k}$ 
such that $W=_F \prod_{i=1}^k f_i^{-1} R_if_i$. 
We denote by $\area(W)$ the smallest number $k$ 
among all such representations of the image of $W$ in $F$ as above. 
If there exists the maximum of $\{\area(W) \mid \|W\|\leq n, \overline W=1_G\}$ 
for each non-negative integer $n$, then 
the {\it relative Dehn function} of $(G,\H)$ with respect to $(X,\mathcal R)$
is defined as 
\[
\delta^{rel}_{G,\H}:\{0\}\sqcup\N\ni n\mapsto 
\max\{\area(W) \mid \|W\|\leq n, \overline W=1_G\}\in \{0\}\sqcup\N.
\]
If not, then we say that 
the relative Dehn function of $(G,\H)$ with respect to $(X,\mathcal R)$
is not well-defined. 

We recall the definition of 
relatively hyperbolic groups by Osin (\cite[Definition 2.35]{Osi06a}).
\begin{Def}\label{O}
If $(G,\H)$ has a finite relative presentation $(X,\mathcal R)$ 
such that the relative Dehn function $\delta^{rel}_{G,\H}$ of $(G,\H)$ 
with respect to $(X,\mathcal R)$ is linear, 
we say that $G$ is {\it hyperbolic relative to} $\H$ or $(G,\H)$ is {\it relatively hyperbolic}. 
\end{Def}

This definition is independent of the choice of a finite relative presentation of $(G,\H)$ by \cite[Theorem 2.34]{Osi06a}. 

\begin{Rem}
Let $\K$ be a family of subgroups of $G$ such that $\K$ may contain trivial subgroups of $G$. 
We suppose that $(G,\K)$ has a finite relative presentation $(X,\mathcal R)$ 
and $G$ is hyperbolic relative to $\K$ in the sense of Osin. 
We denote by $\H$ a subfamily of $\K$ which consists of all non-trivial subgroups of $G$ in $\K$. 
Then $(X,\mathcal R)$ is also a finite relative presentation of $(G,\H)$ 
and $\delta^{rel}_{G,\H}=\delta^{rel}_{G,\K}$. 
Hence we do not consider families of subgroups containing trivial subgroups.
\end{Rem}

\subsection{Relative Cayley graphs and coned-off Cayley graphs}

Let $\Gamma$ be a graph whose set of vertices (resp.\ edges) 
is denoted by $V=V(\Gamma)$ (resp.\ $E=E(\Gamma)$). 
In this note, graphs mean oriented graphs and thus edges are oriented 
(see for example \cite{Ser80}). 
For each edge $e \in E$, we denote the origin and 
the terminus of $e$ by $e_-$ and $e_+$, respectively. 
Let us denote by $e^{-1}$ the inverse edge of $e$ whose origin (resp.\ terminus) 
is $(e^{-1})_-=e_+$ (resp.\ $(e^{-1})_+=e_-$). 

For each $i=1,2,\ldots,l$, let $e_i$ be an edge of $\Gamma$. 
If a sequence of edges $p=e_1e_2\cdots e_l$ satisfies 
$(e_i)_+=(e_{i+1})_-$ for each $i=1,2,\ldots, l -1$, 
then $p$ is called a {\it path} of $\Gamma$. 
The {\it length} of a path $p=e_1e_2\cdots e_l$ 
is defined as the number of its edges $l$ and denoted by $l(p)$. 
The vertices $(e_1)_-$ and $(e_l)_+$ are denoted by $p_-$ and $p_+$, respectively. 
For a path $p=e_1e_2\cdots e_l$, 
we define a path $p^{-1}$ as $e_l^{-1}e_{l-1}^{-1}\cdots e_1^{-1}$.
When $(e_1)_-=(e_l)_+$, the path $p$ is called a {\it cycle} of $\Gamma$. 
When $p$ is a cycle, a subpath of a cyclic shift of $p$ 
is also regarded as a subpath of $p$.
A path $p=e_1e_2\cdots e_l$ is called an {\it arc}
if $(e_1)_-,\ldots,(e_l)_-,(e_l)_+$ are mutually different. 
A cycle $p=e_1e_2\ldots e_l$ is a {\it circuit}
if $(e_1)_-,\ldots,(e_l)_-$ are mutually different
and $e_{1}\ne e_l^{-1}$ is satisfied. 

Let $Z$ be a set endowed with an involution $Z\ni z\mapsto z^{-1}\in Z$ and 
let $Z^\ast$ be the free monoid generated by $Z$ and be endowed with the induced involution. 
We call a map $\phi \colon E \to Z$ such that $\phi(e^{-1})=\phi(e)^{-1}$ a {\it labeling by} $Z$. 
The {\it label} of a path $p=e_1e_2 \cdots e_l$ of $\Gamma$ is defined as 
$\phi(p)=\phi(e_1)\phi(e_2) \cdots \phi(e_l) \in Z^\ast$. 
We note that $\phi(p^{-1})=\phi(p)^{-1}$ for any path $p$.

Let $X$ be a system of $G$, where $X$ is not necessarily a generating system of $G$. 
The {\it Cayley graph} $\Gamma(G,X)$ is a graph whose set of vertices (resp. edges) 
is $G$ (resp. $G\times X$) and for any edge $(g,x)\in G\times X$, 
two ends are $(g,x)_-=g$ and $(g,x)_+=gx$. 
Also we define the labeling by $X$ as $G\times X\ni (g,x)\mapsto x\in X$. 
The graph metric on $G$ is denoted by $d_X$, where 
if $g_1,g_2\in G$ cannot be connected 
by any path of $\Gamma (G,X)$, then we put $d_X(g_1,g_2)=\infty$. 

The {\it relative Cayley graph} 
$\overline \Gamma(G,\H,X)$ of $(G,\H,X)$ is the Cayley graph $\Gamma(G,X\sqcup \mathcal H)$.
We note that this has the labeling by $X\sqcup \mathcal H$. 

The {\it coned-off Cayley graph} $\widehat \Gamma(G,\H,X)$ is a graph 
constructed by adding cone-vertices and cone-edges to the Cayley graph $\Gamma(G,X)$. 
Here cone-vertices and cone-edges are defined as follows: 
For any left coset $gH\in G/H$ for every $H\in \H$, we define a {\it cone-vertex} named by $v(gH)$. 
For any $g\in G$ and any $H\in \H$, we define $[g,v(gH)]$ as a unique edge from $g$ to $v(gH)$
and its inverse edge $[v(gH),g]$ as a unique edge from $v(gH)$ to $g$. 
We call such edges {\it cone-edges}.  
For any $H\in\H$ and any different $g_1,g_2\in G$ with $g_1H=g_2H$, 
we call a path $[g_1, v(g_1H)][v(g_2H),g_2]$ the {\it cone-biedge} from $g_1$ to $g_2$. 
Note that $v(g_1H)=v(g_2H)$.

Note that the family $X$ is a relative generating system of $(G,\H)$ 
if and only if the relative (resp.\ coned-off) Cayley graph of $(G,\H,X)$ is connected. 

For a path $p$ of $\overline \Gamma(G,\H,X)$ and $H\in\H$, 
a subpath $q$ of $p$ is called an {$H$-component} of $p$ 
if $q$ is labeled by $(H\setminus \{1\})^\ast$ 
and not properly contained 
in any subpath $r$ of $p$ which is labeled by $(H\setminus \{1\})^*$.
We call a subpath $q$ of $p$ an {\it $\H$-component} 
if $q$ is an $H$-component of $p$ for some $H\in\H$. 
A path $p$ of $\overline \Gamma(G,\H,X)$ is said to be {\it locally minimal} 
if the length of any $\H$-component of $p$ is equal to $1$. 
A vertex of $p$ is called a {\it phase vertex} of $p$ if it is an end of an $\H$-component or
an edge labeled by $X$.
For paths $p$ and $p'$ of $\overline \Gamma(G,\H,X)$ and $H\in \H$, 
$H$-components $q$ of $p$ and $q'$ of $p'$ are said to be {\it connected} 
if vertices $q_-$ and $q'_-$ belong to the same left coset of $H$, that is, $(q_-)H=(q'_-)H$. 
For a path $p$ of $\overline \Gamma(G,\H,X)$ and $H\in \H$, 
an $H$-component $q$ of $p$ is said to be {\it isolated} 
if any other $H$-component $r$ of $p$ is not connected to $q$. 
A path $p$ of $\overline \Gamma(G,\H,X)$ is said to be {\it without backtracking} 
if every $\H$-component of $p$ is isolated. 

Let a path $p$ of $\widehat \Gamma(G,\H,X)$ consist 
of cone-biedges and edges labeled by elements of $X$. 
In particular its ends belong to $G$. 
For $H\in\H$ and $g\in G$, 
we say that $p$ {\it penetrates} the left coset $gH$ by a subpath $q$ of $p$ 
if $q$ consists of cone-biedges with a vertex $v(gH)$ 
and is not properly contained in any subpath $r$ of $p$ 
which consists of cone-biedges with a vertex $v(gH)$. 
Such a subpath $q$ of $p$ is called a {\it penetrating subpath} of $gH$. 
The path $p$ is said to be {\it locally minimal} 
if for any left coset $gH$, whenever $p$ penetrates $gH$ by a subpath $q$, 
the subpath $q$ consists of a cone-biedge of $\widehat \Gamma(G,\H,X)$. 
A vertex of $p$ is called a {\it phase vertex} of $p$ if it is an end of a penetrating subpath 
or an edge labeled by $X$.
The path $p$ is called a path {\it without backtracking} 
if every left coset $gH$ is penetrated by $p$ less than twice. 

Now we consider relation between paths of $\overline \Gamma(G,\H,X)$ and $\widehat \Gamma(G,\H,X)$. 
Note that $\Gamma(G,X)$ is contained in both $\overline \Gamma(G,\H,X)$ and $\widehat \Gamma(G,\H,X)$. 
Take an arbitrary path $p$ of $\overline \Gamma(G,\H,X)$.
We construct a path $\pi(p)$ of $\widehat \Gamma(G,\H,X)$ from $p$ 
by replacing all edges labeled by $\mathcal H$ with cone-biedges, that is, 
replacing an edge $(g, h)$ labeled by $h\in H\setminus \{1\}$ from $g\in G$ to $gh\in G$
with a cone-biedge $[g,v(gH)][v(gH),gh]$. 
Here we collect well-known properties between relative Cayley graphs and coned-off Cayley graphs. 

\begin{Lem}\label{dict}
Suppose that $G$ is finitely generated relative to $\H$ and denote by $X$ 
a finite generating system of $G$ relative to $\H$. 
Then we have the following: 
\begin{enumerate}
\item The map $id_G:G\to G$ is $(2,0)$-quasi-isometric from $G$ with $d_{X\sqcup \mathcal H}$ 
to $G$ with the distance induced by $d_{\widehat \Gamma(G,\H,X)}$. 
\item For any $g_1,g_2\in G$ we take any path $\widehat p$ from $g_1$ to $g_2$ 
of $\widehat \Gamma(G,\H,X)$ which consists of cone-biedges and edges labeled by elements of $X$. 
Then there exists a unique path $\overline p$ from $g_1$ to $g_2$ of 
$\overline \Gamma(G,\H,X)$ with $\pi(\overline p)=\widehat p$ 
and the set of phase vertices of $\widehat p$ is equal to that of $\overline p$. 
\item For any $g_1,g_2\in G$, we consider a path $\overline p$ from $g_1$ to $g_2$ 
of $\overline \Gamma(G,\H,X)$ and put $\widehat p=\pi(\overline p)$. 
Then the following hold:
\begin{enumerate} 
\item For a subpath $\overline q$ of $\overline p$, 
$\overline q$ is an $H$-component of $\overline p$
if and only if $\widehat p$ penetrates a left coset of $H$ 
by a penetrating subpath $\pi(\overline q)$. 
\item The path $\overline p$ is locally minimal if and only if $\widehat p$ is locally minimal. 
\item The path $\overline p$ is a path without backtracking if and only 
if $\widehat p$ is a path without backtracking. 
\item The path $\overline p$ is a locally minimal circuit without backtracking 
(resp. a locally minimal arc without backtracking)
if and only if $\widehat p$ is a circuit (resp. an arc).  
\end{enumerate}
\end{enumerate}
\end{Lem}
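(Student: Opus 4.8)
The whole lemma is bookkeeping around the single map $\pi$, which replaces each edge of $\overline{\Gamma}(G,\H,X)$ labeled by some $h\in H\setminus\{1\}$ (an edge from $g$ to $gh$) by the cone-biedge $[g,v(gH)][v(gH),gh]$ and leaves every edge labeled by $X$ untouched. The plan is to read off each assertion from this local replacement rule, keeping edges labeled by $X$ and edges labeled by $\mathcal H$ separate throughout.

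For the quasi-isometry in the first part I would argue with geodesics. Writing a geodesic of $\overline{\Gamma}(G,\H,X)$ from $g_1$ to $g_2$ as $a$ edges labeled by $X$ together with $b$ edges labeled by $\mathcal H$, so that $d_{X\sqcup\mathcal H}(g_1,g_2)=a+b$, its image under $\pi$ is a path in $\widehat{\Gamma}(G,\H,X)$ of length $a+2b$, whence $d_{\widehat{\Gamma}(G,\H,X)}(g_1,g_2)\le a+2b\le 2\,d_{X\sqcup\mathcal H}(g_1,g_2)$. Conversely a geodesic of $\widehat{\Gamma}(G,\H,X)$ between the $G$-vertices $g_1,g_2$ visits each vertex at most once and so decomposes into $a'$ edges labeled by $X$ and $b'$ cone-biedges (a cone-vertex, not being an endpoint, is entered and left exactly once); replacing each cone-biedge by the corresponding edge labeled by $\mathcal H$ yields a path in $\overline{\Gamma}(G,\H,X)$ of length $a'+b'$, so that $d_{X\sqcup\mathcal H}(g_1,g_2)\le a'+b'\le a'+2b'=d_{\widehat{\Gamma}(G,\H,X)}(g_1,g_2)$. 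Together these give $d_{X\sqcup\mathcal H}\le d_{\widehat{\Gamma}(G,\H,X)}\le 2\,d_{X\sqcup\mathcal H}$ on $G$, which, since $id_G$ is onto the $G$-vertices, is exactly a $(2,0)$-quasi-isometry.

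For the second part the construction of $\overline p$ is forced: a cone-biedge $[g,v(gH)][v(gH),gh]$ of $\widehat p$ has $gh\in gH$ with $gh\neq g$, so $gh=gh_0$ for a unique $h_0\in H\setminus\{1\}$, and I replace it by the edge from $g$ to $gh_0$ labeled by $h_0$, keeping the edges labeled by $X$; this produces a path $\overline p$ with $\pi(\overline p)=\widehat p$. Uniqueness holds because $\pi$ sends edges labeled by $X$ to edges labeled by $X$ and edges labeled by $\mathcal H$ to cone-biedges in a bijective fashion, so any preimage of $\widehat p$ must make precisely these replacements. The core observation, shared by the second and third parts, is that a maximal block of consecutive edges labeled in a fixed $H\setminus\{1\}$ stays inside one coset $gH$ and therefore becomes, under $\pi$, a maximal string of cone-biedges through the single cone-vertex $v(gH)$; this is precisely the correspondence between $H$-components of $\overline p$ and penetrating subpaths of $\widehat p$ asserted in (iii)(a). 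Granting it, the phase vertices match (both are the endpoints of these blocks together with the endpoints of edges labeled by $X$), which gives the remaining claim of the second part, while (iii)(b) and (iii)(c) are immediate translations: a length-one $\H$-component becomes a single cone-biedge, and two $H$-components lying in one coset become a coset penetrated twice.

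The delicate point, and where I expect the real work, is (iii)(d); here I would track the vertices of $\widehat p$ by type. If $\overline p$ is a locally minimal arc without backtracking, then local minimality makes every penetrating subpath of $\widehat p$ a single cone-biedge, so all vertices of $\overline p$ are phase vertices and occur as the $G$-vertices of $\widehat p$, and these are pairwise distinct because $\overline p$ is an arc; the cone-vertices of $\widehat p$ are pairwise distinct because distinct cone-biedges arise from distinct $H$-components and ``without backtracking'' forbids two of them sharing a coset of the same $H$. As $G$-vertices and cone-vertices are of different types, every vertex of $\widehat p$ is distinct and $\widehat p$ is an arc. The converse runs the same way: if $\widehat p$ is an arc then no cone-vertex recurs, which both forbids a penetrating subpath of two or more cone-biedges (inside which $v(gH)$ would reappear), forcing local minimality, and forbids two penetrations of one coset, forcing ``without backtracking'', while distinctness of the $G$-vertices makes $\overline p$ an arc. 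The circuit case is identical after passing to cyclic shifts, the condition $e_1\neq e_l^{-1}$ matching the non-degeneracy required of a circuit. The one thing demanding constant care is the indexing of cone-vertices: since the paper allows two members of $\H$ to coincide as subgroups of $G$, a cone-vertex must be regarded as indexed by the pair of the element $H\in\H$ and the coset, so that distinct members of $\H$ always give distinct cone-vertices.
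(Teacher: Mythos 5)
The paper states this lemma without proof, introducing it only as a collection of well-known facts about the map $\pi$, so there is no argument of the authors' to compare against; your verification via the local replacement rule (an $X$-labeled edge stays put, an $\mathcal H$-labeled edge from $g$ to $gh$ becomes the cone-biedge through $v(gH)$) is correct and is the standard argument one would supply. Your closing remark that cone-vertices must be indexed by the pair consisting of the element $H\in\H$ and the coset, rather than by the underlying subgroup, is exactly the point that needs care here, since the paper allows two members of $\H$ to coincide as subgroups of $G$.
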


We recall the definition of fine graphs due to Bowditch \cite[p.9]{Bow12}. 

\begin{Def}
A graph $\Gamma$ is said to be {\it fine} 
if for any edge $e$ of $\Gamma$ and any positive integer $n$, 
the set of circuits of length at most $n$ containing $e$ in $\Gamma$ is finite. 
\end{Def}

We confirm well-known facts that both hyperbolicity and fineness 
are independent of the choice of a finite relative generating system. 
\begin{Lem}\label{choiceX}
Suppose that $(G,\H)$ has two finite relative generating systems $X$ and $Y$. 
Then we obtain the following: 
\begin{enumerate}
\item $X\sqcup Y$ is also a finite relative generating system of $(G,\H)$ 
and the natural inclusion from $\widehat{\Gamma}(G,\H,X)$ into $\widehat{\Gamma}(G,\H,X\sqcup Y)$
(resp.\ from $\widehat{\Gamma}(G,\H,Y)$ to $\widehat{\Gamma}(G,\H,X\sqcup Y)$) is a quasi-isometry. 
\item $\widehat{\Gamma}(G,\H,X)$ is hyperbolic if and only if $\widehat{\Gamma}(G,\H,Y)$ is hyperbolic. 
\item $\widehat{\Gamma}(G,\H,X)$ is fine if and only if $\widehat{\Gamma}(G,\H,Y)$ is fine. 
\end{enumerate}
\end{Lem}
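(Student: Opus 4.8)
The plan is to reduce all three statements to the comparison of $\widehat\Gamma(G,\H,X)$ with $\widehat\Gamma(G,\H,X\sqcup Y)$, since $X\sqcup Y$ is symmetric in $X$ and $Y$; the assertions about $X$ versus $Y$ then follow by routing through $X\sqcup Y$. For (i), first observe that $X\sqcup Y$ is a (finite, symmetric) relative generating system because $X$ already is. The graphs $\widehat\Gamma(G,\H,X)$ and $\widehat\Gamma(G,\H,X\sqcup Y)$ have literally the same vertex set (cone-vertices depend only on $\H$) and the same cone-edges, and every $X$-edge is an $(X\sqcup Y)$-edge; hence the natural inclusion is the identity on vertices and exhibits $\widehat\Gamma(G,\H,X)$ as a subgraph. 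Writing $\hat d_X$ and $\hat d_{X\sqcup Y}$ for the two path metrics, this gives $\hat d_{X\sqcup Y}\le \hat d_X$ at once. For the reverse inequality I would use that, since $X$ is a relative generating system, each $y\in Y$ satisfies $y=\overline{W_y}$ for some $W_y\in(X\sqcup\mathcal H)^\ast$; reading $W_y$ in $\widehat\Gamma(G,\H,X)$ (each $X$-letter as an $X$-edge, each $\mathcal H$-letter as a cone-biedge of length $2$, i.e.\ applying $\pi$) yields a path from $1$ to $y$ of length at most $2\|W_y\|$. Setting $M:=\max_{y\in Y}2\|W_y\|<\infty$ and replacing, in any $\widehat\Gamma(G,\H,X\sqcup Y)$-geodesic, each $Y$-edge by the suitable left translate of such a path (the $X$-edges and cone-edges being unchanged) gives $\hat d_X\le M\,\hat d_{X\sqcup Y}$. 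Thus the inclusion is a quasi-isometry, proving (i).

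Part (ii) is then immediate: by (i) the three graphs are pairwise quasi-isometric, they are geodesic spaces (being connected graphs), and Gromov hyperbolicity is a quasi-isometry invariant of geodesic spaces, so one of them is hyperbolic precisely when all are.

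For (iii) it suffices, by the same reduction, to show that for finite relative generating systems $Z$ and $Z\sqcup W$ one has $\widehat\Gamma(G,\H,Z)$ fine if and only if $\widehat\Gamma(G,\H,Z\sqcup W)$ fine; set $\Gamma_1=\widehat\Gamma(G,\H,Z)$ and $\Gamma_2=\widehat\Gamma(G,\H,Z\sqcup W)$, so $\Gamma_1\subseteq\Gamma_2$ is a subgraph with identical vertices and cone-edges. The direction \emph{$\Gamma_2$ fine $\Rightarrow\Gamma_1$ fine} is easy: any circuit of $\Gamma_1$ through an edge $e$ is also a circuit of $\Gamma_2$ of the same length through $e$, so finiteness in $\Gamma_2$ forces it in $\Gamma_1$. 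The substantial direction is \emph{$\Gamma_1$ fine $\Rightarrow\Gamma_2$ fine}. Here I fix an edge $e$ of $\Gamma_2$ and $n\in\N$, take a circuit $c$ of length at most $n$ through $e$, and decompose it along its $W$-edges: deleting the $W$-edges cuts $c$ into subpaths $\beta_0,\dots,\beta_k$ with $k\le n$, each an arc lying in $\Gamma_1$ (since it uses only $Z$-edges and cone-edges), separated by $W$-edges with labels $w_1,\dots,w_k\in W$. Since $e$, $k$, and the labels $w_j$ range over finite sets, the circuit $c$ is determined by this finite combinatorial pattern together with the arcs $\beta_j$ and their endpoints, so it is enough to bound the number of realizations.

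The main obstacle is exactly this bounding step. The intermediate endpoints of the $\beta_j$ are group vertices that are a priori unconstrained, and near a cone-vertex $v(gH)$ a single cone-edge joins $v(gH)$ to all of the (possibly infinite) coset $gH$; so one cannot naively bound how many vertices occur, and indeed fineness of $\Gamma_1$ controls circuits through a \emph{fixed} edge but not, say, closed walks or arcs with a free endpoint. The resolution must exploit that $c$ is a genuine circuit — all its vertices distinct and closing up through the fixed edge $e$ — a global constraint that fineness of $\Gamma_1$ converts into finiteness. Concretely I would argue by contradiction: if infinitely many such $c$ occurred, then, after grouping them into the finitely many combinatorial patterns above, infinitely many would share one pattern and differ only at the coset elements chosen in some cone-vertex passage $a\to v(gH)\to b$, forcing infinitely many distinct $b\in gH$; splicing two such circuits across the cone-biedge between the corresponding coset elements would produce infinitely many distinct circuits of uniformly bounded length through a single fixed edge of $\Gamma_1$ at $v(gH)$, contradicting fineness of $\Gamma_1$. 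Organizing the finitely many patterns and making the splicing yield honest circuits (distinct vertices) through a fixed edge is the technical heart of the argument; an induction on the number $k$ of $W$-edges is the natural way to structure it, and Lemma \ref{dict}, which translates circuits of $\widehat\Gamma$ into locally minimal arcs without backtracking in $\overline\Gamma(G,\H,Z)$, is the convenient bookkeeping device for tracking the cone-vertex passages throughout.
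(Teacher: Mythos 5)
Your handling of (i) and (ii) is correct and follows the same route as the paper: reduce everything to the inclusion $\widehat{\Gamma}(G,\H,X)\hookrightarrow\widehat{\Gamma}(G,\H,X\sqcup Y)$, note it is a subgraph inclusion that is the identity on vertices, and get the quasi-isometry by replacing each $Y$-edge by a uniformly bounded $\widehat{\Gamma}(G,\H,X)$-path (the paper simply cites \cite[Lemma 2.7]{MP-W11a} for this, but the content is the argument you give). The easy half of (iii) (a subgraph of a fine graph is fine) is also exactly what the paper uses. The genuine gap is in the hard half of (iii), which is precisely the step the paper outsources to \cite[Lemma 2.9]{MP-W11a} (adding finitely many $G$-orbits of edges, with endpoints joined by uniformly bounded paths and with finite stabilizers, to a fine graph preserves fineness).

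Two steps of your contradiction argument fail as written. First, the reduction ``infinitely many circuits would share one pattern and differ only at the coset elements chosen in some cone-vertex passage'' is not available: your combinatorial pattern records $k$, the labels $w_1,\dots,w_k$, the position of $e$ and the lengths of the $\beta_j$, but the endpoints of the $W$-edges are arbitrary elements of $G$, so even after pigeonholing on patterns the circuits may differ at all of these endpoints at once; confining those endpoints to a finite set is essentially the statement being proved, so the argument is circular exactly at its load-bearing point. Second, two distinct circuits cannot literally agree except at one vertex $b$ of a passage $a\to v(gH)\to b$: if the tails from $b_1$ and from $b_2$ back to $e$ coincided as edge-paths they would have the same initial vertex, forcing $b_1=b_2$; so the splicing has no well-defined input. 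The missing idea is Bowditch's equivalent formulation of fineness: a graph is fine if and only if for every pair of vertices $u,v$ and every $n$ the set of \emph{arcs} from $u$ to $v$ of length at most $n$ is finite. With both endpoints of the relevant arcs pinned down, one can run your induction on the number of $W$-edges (replacing the first $W$-edge by a fixed $\Gamma_1$-path of length at most $M$ between its endpoints) and actually control the floating intermediate vertices; without it, fineness of $\Gamma_1$ says nothing about arcs with a free endpoint, as you yourself observe. This arc characterization is what underlies the cited \cite[Lemma 2.9]{MP-W11a}, and your proof is incomplete until that mechanism (or a citation to it) is supplied.
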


\begin{proof}
(i) It is clear that $X\sqcup Y$ is a finite generating system of $G$ relative to $\H$. 
Each of $\widehat \Gamma(G,\H,X)$ and $\widehat \Gamma(G,\H,Y)$ 
is naturally regarded as a subgraph of $\widehat \Gamma(G,\H,X\sqcup Y)$. 
By \cite[Lemma 2.7]{MP-W11a}, the natural inclusion from $\widehat \Gamma(G,\H,X)$ 
into $\widehat \Gamma(G,\H,X\sqcup Y)$ 
(resp.\ from $\widehat \Gamma(G,\H,Y)$ into $\widehat \Gamma(G,\H,X\sqcup Y)$) 
is a quasi-isometry. 

(ii) Since $\widehat \Gamma(G,\H,X)$ is quasi-isometric to $\widehat \Gamma(G,\H,Y)$, 
one is hyperbolic if and only if the other is hyperbolic. 

(iii) Note that the stabilizer of any edge of $\widehat \Gamma(G,\H,X\sqcup Y)$ is finite. 
By the same way as in the proof of \cite[Lemma 2.9]{MP-W11a}, 
we can show that if $\widehat \Gamma(G,\H,X)$ is fine, 
then $\widehat \Gamma(G,\H,X\sqcup Y)$ is also fine. 
Since any subgraph of a fine graph is fine, 
the coned-off Cayley graph $\widehat \Gamma(G,\H,X)$ is fine 
if $\widehat \Gamma(G,\H,X\sqcup Y)$ is fine. 
\end{proof}

BCP property due to Farb \cite{Far98} can be extended 
to a group which is not necessarily finitely generated. 
The following is based on \cite[Theorem 2.14 (2)]{MP11}. 

\begin{Def}\label{BCP}
Suppose that $(G,\H)$ has a finite relative generating system $X$ of $(G,\H)$. 
We say that $(G,\H,X)$ satisfies {\it BCP property} 
if there exists a finite relative generating system $Y$ of $(G, \H)$ satisfying the following:
For any $\mu\ge 1$ and any $C\ge 0$, there exists a constant $a=a(\mu, C)$ such that 
for any $g\in G$, any $H\in \H$ and 
any $(\mu,C)$-quasigeodesics $p$ and $q$ without backtracking in $\widehat{\Gamma}(G,\H,X)$ 
satisfying $p_-,p_+,q_-,q_+ \in G$, $p_-=q_-$ and $p_+=q_+$, 
if $p$ penetrates $gH$ by a penetrating subpath $s$ and $q$ does not penetrate $gH$, 
then $d_Y(s_-,s_+)\le a$. 
\end{Def}

By Lemma \ref{dict}, we have the following equivalent definition. 
\begin{Def}\label{BCP'}
Suppose that $(G,\H)$ has a finite relative generating system $X$ of $(G,\H)$. 
Then $(G,\H,X)$ satisfies BCP property 
if there exists a finite relative generating system $Y$ of $(G, \H)$ satisfying the following:
For any $\mu\ge 1$ and any $C\ge 0$, there exists a constant $a=a(\mu, C)$ 
such that for any $H\in \H$ and any $(\mu, C)$-quasigeodesics $p$ and $q$ without backtracking 
in $\overline{\Gamma}(G,\H,X)$ satisfying $p_-=q_-$ and $p_+=q_+$, 
if an $H$-component $s$ of $p$ is connected 
with no $H$-components of $q$, then $d_Y(s_-,s_+)\le a$.
\end{Def}

\begin{Rem}
By \cite[Appendix A]{Dar03}, 
when $G$ is finitely generated, 
Definitions \ref{BCP} and \ref{BCP'} are equivalent to 
BCP property due to Farb \cite{Far98}. 
\end{Rem}

\subsection{Condition $(a)$}

Before defining Condition $(a)$, we show the following lemma. 

\begin{Lem}\label{cycle} 
Let $X$ be a relative generating system of $(G,\H)$. 
Take a subfamily $\H'$ of $\H$. 
Then the following are equivalent:
\begin{enumerate}
\item Any circuit of $\widehat{\Gamma}(G,\H,X)$
penetrates no left cosets of each $H\in \H\setminus\H'$;
\item Any locally minimal circuit without backtracking of 
$\overline{\Gamma}(G,\H,X)$ contains no isolated $H$-components for each $H\in\H\setminus \H'$; 
\item Any cycle of $\overline{\Gamma}(G,\H,X)$ contains no isolated $H$-components 
with different endpoints for each $H\in\H\setminus \H'$; 
\item The group $G$ is freely decomposed into $\langle X,\H'\rangle \ast(\ast_{H\in \H\setminus\H'}H)$, where $\langle X,\H'\rangle$ is a subgroup generated by $X$ and all elements of $\H'$ in $G$;
\item Any different elements of each $H\in \H\setminus \H'$ 
cannot be connected by any path of $\overline{\Gamma}(G,\H,X)$ 
without edges belonging to $H\times (H\setminus \{1\})\subset G\times (H\setminus \{1\})$. 
\end{enumerate}
\end{Lem}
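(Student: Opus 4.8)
The plan is to establish the five equivalences by proving a cycle of implications, exploiting Lemma \ref{dict} to translate freely between the relative Cayley graph $\overline\Gamma(G,\H,X)$ and the coned-off Cayley graph $\widehat\Gamma(G,\H,X)$, and exploiting the dictionary between free-product decompositions and the structure of words/paths. A natural route is
\[
\text{(i)}\Leftrightarrow\text{(ii)}\Leftrightarrow\text{(iii)},\qquad \text{(iii)}\Rightarrow\text{(iv)}\Rightarrow\text{(v)}\Rightarrow\text{(iii)}.
\]
First I would dispatch (i)$\Leftrightarrow$(ii): by Lemma \ref{dict}(iii)(d) a locally minimal circuit without backtracking of $\overline\Gamma$ corresponds precisely to a circuit of $\widehat\Gamma$, and by Lemma \ref{dict}(iii)(a) an $H$-component of $\overline p$ corresponds to a penetration of a left coset of $H$ by $\widehat p=\pi(\overline p)$. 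An isolated $H$-component (no other $H$-component connected to it, i.e.\ in the same coset) translates exactly into the condition that the coset is penetrated, and the ``without backtracking'' hypothesis forces that penetration to be unique. So (i) and (ii) are two phrasings of the same combinatorial fact.

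The implication (ii)$\Rightarrow$(iii) is immediate once one observes that from an arbitrary cycle containing an isolated $H$-component $q$ with $q_-\neq q_+$ one can extract, by deleting non-isolated components and straightening $\H$-components to length one, a locally minimal circuit without backtracking that still contains an isolated $H$-component with distinct endpoints; conversely a locally minimal circuit without backtracking is in particular a cycle, giving (iii)$\Rightarrow$(ii) with essentially no work. The genuinely structural part is (iii)$\Leftrightarrow$(iv)$\Leftrightarrow$(v), which identifies the purely graph-theoretic conditions with the algebraic free-product splitting. For (iv)$\Rightarrow$(v): if $G=\langle X,\H'\rangle \ast(\ast_{H\in\H\setminus\H'}H)$, then a path in $\overline\Gamma$ connecting two distinct elements $h,h'$ of a fixed $H\in\H\setminus\H'$ while using no edge from $H\times(H\setminus\{1\})$ would read off a word representing $h^{-1}h'\in H\setminus\{1\}$ as a product of generators from $X$, from the other free factors, and from $H$-letters that never stay inside the distinguished copy of $H$; normal-form uniqueness in the free product forbids such a nontrivial relation. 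For (v)$\Rightarrow$(iii): an isolated $H$-component with distinct endpoints inside a cycle provides exactly a return path from $h$ to $h'$ in $H$ avoiding $H$-edges, contradicting (v) after closing up the cycle. For (iii)$\Rightarrow$(iv) I would build the free-product decomposition directly: define the candidate free factors, and use the absence of isolated $H$-components with distinct endpoints in cycles to show that no nontrivial reduced word over the factors can equal the identity, i.e.\ that $\psi$ restricted to the free product $\langle X,\H'\rangle \ast(\ast_{H\in\H\setminus\H'}H)$ is injective (surjectivity is clear since $X$ together with all of $\H$ generates $G$).

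The main obstacle is the implication (iii)$\Rightarrow$(iv): translating a negative condition on cycles in a graph into the injectivity of a homomorphism out of a free product requires care in matching a would-be nontrivial relation in the free product with a cycle in $\overline\Gamma(G,\H,X)$ that is forced to contain an isolated $H$-component with distinct endpoints. The delicate point is reducing an arbitrary relator to a cycle that is simultaneously \emph{without backtracking} and exhibits a genuinely \emph{isolated} component — one must argue that if every $H$-component were non-isolated (connected to another in the same coset) one could shorten or cancel, so a minimal nontrivial relator must display an isolated component, contradicting (iii). I expect this reduction-to-minimal-counterexample argument, together with the bookkeeping needed to keep endpoints distinct, to carry essentially all the weight of the proof; the remaining implications are formal consequences of Lemma \ref{dict} and free-product normal forms.
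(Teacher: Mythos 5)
Your proposal is correct and logically complete, but it distributes the effort differently from the paper. The paper's written argument consists of exactly two steps: (i)$\Leftrightarrow$(ii) via Lemma \ref{dict} (iii), just as you do, and (ii)$\Rightarrow$(iii) (in contrapositive), which it proves by replacing the offending isolated $H$-component $s$ with $s_-\neq s_+$ by the single edge $e=(s_-,h)$, $h=_F\phi(s)$, and then taking a \emph{shortest} cycle containing $e$ as an isolated $H$-component; such a shortest cycle is automatically a locally minimal circuit without backtracking. That fix-an-edge-and-minimize formulation is sharper than your ``deleting non-isolated components and straightening,'' which as literally stated does not obviously preserve cyclehood or the isolatedness of the surviving component, so you should adopt the paper's version of the surgery. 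The equivalence of (iii), (iv) and (v), which you treat as the structural heart of the lemma and prove via (iii)$\Rightarrow$(iv)$\Rightarrow$(v)$\Rightarrow$(iii), the paper simply declares ``clear'': your minimal-relator argument for (iii)$\Rightarrow$(iv) and your normal-form argument for (iv)$\Rightarrow$(v) are precisely the standard free-product facts being invoked there, so writing them out is harmless and arguably a service to the reader. One caution on (iv)$\Rightarrow$(v): condition (v) only forbids edges whose origin lies in the subgroup $H$ itself, so the path from $h$ to $h'$ may still traverse $H$-labelled edges inside other cosets $gH$; the contradiction therefore cannot be read off from the label word alone, and you must track the vertices visited (via the tree-of-pieces structure of the Cayley graph of a free product, or induction on syllable length). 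Your (v)$\Rightarrow$(iii) step is correct as sketched: translating the complement of the isolated component by $s_-^{-1}$ and using isolatedness shows no internal $H$-edge occurs.
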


\begin{proof}
It follows from Lemma \ref{dict} (iii) (d) that (i) is equivalent to (ii). 
It is clear that (iii), (iv) and (v) are equivalent. 
Also (iii) obviously implies (ii).
 
We prove that (ii) implies (iii). 
Take an $H\in \H$ and a cycle $c$ of $\overline \Gamma(G,\H,X)$. 
Suppose that $c$ contains an isolated $H$-component $s$ with $s_-\ne s_+$. 
We consider an edge $e=(s_-, h)\in G\times (H\setminus \{1\})$ where $h=_F\phi(s)$. 
When we replace $s$ with $e$, we have a cycle $c'$, 
where $e$ is an isolated $H$-component of $c'$. 
There exist shortest cycles which 
contain $e$ as an isolated $H$-component.
Such cycles are locally minimal and without backtracking.    
\end{proof} 

The following is clear. 
\begin{Lem}\label{iso}
Let $X$ be a relative generating system of $(G,\H)$. 
Take a subfamily $\H'$ of $\H$. 
Suppose that $\H'$ satisfies 
one of five (mutually equivalent) conditions in Lemma \ref{cycle}.
Then the natural embedding from 
$\langle X,\H'\rangle$ with $d_{X\sqcup \mathcal H'}$
to $G$ with $d_{X\sqcup \mathcal H}$ is isometric, where 
$\mathcal H':=\bigsqcup_{H'\in \H'} (H'\setminus\{1\})$. 
\end{Lem}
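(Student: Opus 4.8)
The plan is to deduce the statement directly from condition (iv) of Lemma \ref{cycle}, which furnishes the free-product decomposition $G = M \ast P$, where I abbreviate $M := \langle X, \H'\rangle$ and $P := \ast_{H \in \H \setminus \H'} H$. The entire argument rests on the canonical retraction of a free product onto one of its factors, together with a letter-by-letter projection of words. Note first that $X \subseteq M$ and $\mathcal H' \subseteq M$, so $X \sqcup \mathcal H'$ generates $M$ and $d_{X \sqcup \mathcal H'}$ is the graph metric of the connected relative Cayley graph $\overline{\Gamma}(M, \H', X)$; this is what makes the statement meaningful.

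First I would record the trivial inequality. Since $X \sqcup \mathcal H' \subseteq X \sqcup \mathcal H$ as systems, every word over $X \sqcup \mathcal H'$ is a fortiori a word over $X \sqcup \mathcal H$, so any path in $\overline{\Gamma}(M, \H', X)$ joining two elements of $M$ is also a path in $\overline{\Gamma}(G, \H, X)$. Hence $d_{X \sqcup \mathcal H}(g_1, g_2) \le d_{X \sqcup \mathcal H'}(g_1, g_2)$ for all $g_1, g_2 \in M$; in particular the embedding is $1$-Lipschitz, hence well defined and distance-nonincreasing.

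For the reverse inequality I would introduce the retraction homomorphism $\rho \colon G \to M$ determined by $G = M \ast P$, namely the identity on $M$ and trivial on $P$. The key is its compatibility with the two systems: because $X \subseteq M$ and $\mathcal H' \subseteq M$ while $\mathcal H \setminus \mathcal H' \subseteq P$, the map $\rho$ fixes every letter of $X \sqcup \mathcal H'$ and sends every letter of $\mathcal H \setminus \mathcal H'$ to $1$. Now fix $g_1, g_2 \in M$ and, by left-invariance, choose a geodesic word $W = w_1 \cdots w_n$ over $X \sqcup \mathcal H$ with $\overline{W} = g_1^{-1} g_2$ and $\|W\| = d_{X \sqcup \mathcal H}(g_1, g_2)$. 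Let $W'$ be obtained from $W$ by deleting exactly the letters lying in $\mathcal H \setminus \mathcal H'$; this is a legitimate word over $X \sqcup \mathcal H'$ with $\|W'\| \le \|W\|$. Applying $\rho$ letter by letter and using that $\rho$ is a homomorphism gives $\overline{W'} = \rho(\overline{W}) = \rho(g_1^{-1} g_2) = g_1^{-1} g_2$, the last equality because $g_1^{-1} g_2 \in M$. Therefore $d_{X \sqcup \mathcal H'}(g_1, g_2) \le \|W'\| \le \|W\| = d_{X \sqcup \mathcal H}(g_1, g_2)$, which combined with the previous paragraph gives equality.

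I do not anticipate a genuine obstacle here, which is why the statement is announced as clear; the only real content is the bookkeeping underlying the retraction. Concretely, one must check that every letter of $X \sqcup \mathcal H$ falls into exactly one of the two classes ``fixed by $\rho$'' (the letters of $X \sqcup \mathcal H'$) or ``killed by $\rho$'' (the letters of $\mathcal H \setminus \mathcal H'$), and that the killed letters are precisely those counted in $\|W\|$ but discarded in $\|W'\|$, so that deleting them is length-nonincreasing and yields a valid word over $X \sqcup \mathcal H'$. With this classification in place the equality of the two word metrics follows immediately.
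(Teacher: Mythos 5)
Your proof is correct: the paper states this lemma without proof (``The following is clear''), and the retraction argument you give --- the $1$-Lipschitz inclusion in one direction, and in the other direction the projection $G = \langle X,\H'\rangle \ast (\ast_{H\in\H\setminus\H'}H) \to \langle X,\H'\rangle$ applied letter by letter to a geodesic word, deleting the letters of $\mathcal H\setminus\mathcal H'$ --- is exactly the standard justification the authors have in mind via condition (iv) of Lemma \ref{cycle}. The bookkeeping you flag (letters of $X\sqcup\mathcal H'$ are fixed since they represent elements of $\langle X,\H'\rangle$, letters of $\mathcal H\setminus\mathcal H'$ are killed since each such $H$ is a free factor meeting $\langle X,\H'\rangle$ trivially) is all that is needed, so nothing is missing.
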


Now we define Condition $(a)$. 
\begin{Def}\label{*}
Let $X$ be a relative generating system of $(G,\H)$. 
We say that $(G, \H, X)$ satisfies {\it Condition $(a)$} 
if a finite subfamily $\H'$ of $\H$ satisfying
one of five (mutually equivalent) conditions in Lemma \ref{cycle}.

We say that $(G, \H)$ satisfies {\it Condition $(a)$} 
if $(G, \H, X)$ satisfies {\it Condition $(a)$} for some 
relative generating system $X$ of $(G,\H)$. 
\end{Def}

\begin{Rem}\label{aut}
If $\H$ is finite, then $(G,\H, X)$ trivially satisfies Condition $(a)$
for any relative generating system $X$ of $(G,\H)$.
Also if $(G,\H)$ has a finite relative presentation $(X,{\mathcal R})$, 
then $(G,\H, X)$ satisfies Condition $(a)$.
\end{Rem}

The following is clear and is used without being explicitly mentioned
from now on. 
\begin{Lem}
Let $X$ be a relative generating system of $(G,\H)$. 
We denote by $\H_X$ the smallest subfamily of $\H$ satisfying
one of five (mutually equivalent) conditions in Lemma \ref{cycle}.
Then $(G, \H, X)$ satisfies Condition $(a)$ 
if and only if $\H_X$ is finite. 
\end{Lem}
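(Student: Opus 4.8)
The plan is to pin down, explicitly, which subfamilies $\H'\subseteq\H$ satisfy the (equivalent) conditions of Lemma \ref{cycle}, and I would do this through condition (v), since it is the one phrased member-by-member in $H$. First I would fix, for each $H\in\H$, the property $P(H)$ that any two distinct elements of $H$ cannot be connected by a path of $\overline{\Gamma}(G,\H,X)$ without traversing an edge in $H\times(H\setminus\{1\})$; this is precisely what condition (v) demands of each $H\in\H\setminus\H'$. I then set
\[
B:=\{H\in\H \mid \text{$P(H)$ fails}\},
\]
the subfamily of those members for which some two distinct elements \emph{are} joinable by such a path.

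The key observation is that a subfamily $\H'$ satisfies condition (v) if and only if $B\subseteq\H'$. Indeed, (v) requires that every $H\in\H\setminus\H'$ satisfy $P(H)$, which is the contrapositive of the implication ``$P(H)$ fails $\Rightarrow H\in\H'$'', i.e.\ $B\subseteq\H'$. In particular $B$ itself satisfies (v), because $\H\setminus B$ consists exactly of the members for which $P(H)$ holds; by Lemma \ref{cycle} it then satisfies all five conditions. Hence the collection of valid subfamilies is exactly $\{\H' \mid \H'\supseteq B\}$, which has $B$ as its minimum. This shows the smallest valid subfamily $\H_X$ exists and equals $B$, so the statement is well-posed.

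With this description the equivalence is formal. If $\H_X=B$ is finite, then $B$ is itself a finite subfamily satisfying the conditions, so $(G,\H,X)$ satisfies Condition $(a)$. Conversely, if $(G,\H,X)$ satisfies Condition $(a)$, there is a finite subfamily $\H'$ satisfying one (hence all) of the conditions; by the key observation $\H_X=B\subseteq\H'$, so $\H_X$ is contained in a finite family and is therefore finite.

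The only genuinely delicate point is justifying that a smallest valid subfamily exists at all, so that ``$\H_X$ is the smallest such subfamily'' even makes sense; I expect this to be the main thing to get right. Its resolution is exactly the member-by-member form of condition (v): it exhibits the valid subfamilies as the principal up-set generated by $B$, which in particular is closed under arbitrary intersection and has a least element. Everything after that is bookkeeping.
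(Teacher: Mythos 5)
Your argument is correct, and it supplies a proof where the paper supplies none: the authors simply declare this lemma ``clear'' and never address the one point you rightly single out as delicate, namely why a \emph{smallest} subfamily satisfying the conditions of Lemma \ref{cycle} exists at all. Your resolution is the natural one. Condition (v) is the only one of the five stated member-by-member, so defining $P(H)$ as in your proposal and setting $B=\{H\in\H\mid P(H)\ \text{fails}\}$ does show that $\H'$ satisfies (v) exactly when $B\subseteq\H'$ (note that $P(H)$ depends only on $H$ and on the fixed graph $\overline{\Gamma}(G,\H,X)$, not on $\H'$, which is what makes the up-set description legitimate); hence the valid subfamilies form the principal up-set over $B$, the minimum $\H_X=B$ exists, and the equivalence with Condition $(a)$ is then immediate in both directions as you say. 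One could instead try to argue existence of the minimum via condition (iv) and uniqueness properties of free product decompositions, but that is heavier and less transparent than reading it off from (v). The only cosmetic remark is that you might note explicitly that the collection of valid subfamilies is nonempty (e.g.\ $\H'=\H$ works vacuously), though this is subsumed by your verification that $B$ itself satisfies (v). Also, the argument is insensitive to the paper's convention that two members of $\H$ may coincide as subgroups: if $H_1\neq H_2$ in $\H$ are equal as subgroups, then $P$ fails for both (an edge labelled by the other copy joins any two of their elements), so both land in $B$, consistently with condition (iv).
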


\begin{Lem}\label{(*)}
Let $X$ and $X'$ be relative generating systems of $(G,\H)$, respectively.
Suppose that $X'$ includes $X$. 
Then $(G,\H, X)$ satisfies Condition $(a)$ if 
$(G,\H, X')$ satisfies Condition $(a)$.
Moreover suppose that $X'\setminus X$ is finite. 
Then $(G,\H, X)$ satisfies Condition $(a)$ only if 
$(G,\H, X')$ satisfies Condition $(a)$. 

In particular if $(G, \H)$ is relatively finitely generated 
and $(G, \H)$ satisfies Condition $(a)$,
then $(G, \H, X)$ satisfies Condition $(a)$ 
for any finite relative generating system $X$ of $(G,\H)$.
\end{Lem}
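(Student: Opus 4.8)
The plan is to work throughout with the characterisation recorded just before this lemma: $(G,\H,X)$ satisfies Condition $(a)$ if and only if the smallest subfamily $\H_X\subseteq\H$ satisfying the equivalent conditions of Lemma \ref{cycle} is finite. The key point is that condition (v) of Lemma \ref{cycle} is \emph{pointwise} in $H$: given $X$, call $H\in\H$ \emph{$X$-separated} if no two distinct elements of $H$ are joined by a path of $\overline\Gamma(G,\H,X)$ avoiding the edges in $H\times(H\setminus\{1\})$. A subfamily $\H'$ then satisfies condition (v) exactly when every $H\in\H\setminus\H'$ is $X$-separated, so $\H_X$ is precisely the family of non-$X$-separated subgroups.

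For the first assertion (passing from $X'$ to the smaller $X$) the crux is monotonicity of separation under enlarging the generating system. Since $X\subseteq X'$, the graph $\overline\Gamma(G,\H,X)$ is a subgraph of $\overline\Gamma(G,\H,X')$ (same vertex set $G$, fewer edges), so every path of the former is a path of the latter. Hence $X'$-separated implies $X$-separated, so any finite family $\H'$ witnessing condition (v) for $X'$ witnesses it for $X$ as well; equivalently $\H_X\subseteq\H_{X'}$. Thus finiteness of $\H_{X'}$ forces finiteness of $\H_X$, and $(G,\H,X)$ satisfies Condition $(a)$.

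For the second assertion I would argue instead with condition (iv), the free decomposition. Assume $X'\setminus X$ is finite and $\H_X$ is finite, so $G=\langle X,\H_X\rangle\ast\bigl(\ast_{H\in\H\setminus\H_X}H\bigr)$. Each of the finitely many elements of $X'\setminus X$ has a finite normal form in this free product, so together they involve only finitely many of the free factors $H\in\H\setminus\H_X$; call this finite collection $T$. Consequently $X'\setminus X\subseteq\langle X,\H_X\rangle\ast(\ast_{H\in T}H)$, and since the subgroup generated by a subset of the free factors is their free product, the right-hand side equals $\langle X',\H_X\cup T\rangle$. Regrouping the free product then gives $G=\langle X',\H_X\cup T\rangle\ast\bigl(\ast_{H\in\H\setminus(\H_X\cup T)}H\bigr)$, which is condition (iv) of Lemma \ref{cycle} for $(G,\H,X')$ with the finite family $\H_X\cup T$. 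Hence $\H_{X'}\subseteq\H_X\cup T$ is finite and $(G,\H,X')$ satisfies Condition $(a)$. This second assertion is where I expect the real work to lie, and it is exactly here that finiteness of $X'\setminus X$ is indispensable: infinitely many new generators could meet infinitely many factors, so that $T$ would fail to be finite.

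Finally, the ``in particular'' statement follows formally from the first two. Suppose $(G,\H)$ is relatively finitely generated and fix, by hypothesis, a relative generating system $Z$ with $(G,\H,Z)$ satisfying Condition $(a)$ (so $\H_Z$ is finite), and let $X$ be an arbitrary finite relative generating system. Put $X'=X\sqcup Z$. Applying the second assertion to the inclusion $Z\subseteq X'$ (here $X'\setminus Z$ is the copy of $X$, hence finite, and $\H_Z$ is finite) shows that $(G,\H,X')$ satisfies Condition $(a)$; then applying the first assertion to $X\subseteq X'$ shows that $(G,\H,X)$ satisfies Condition $(a)$, as required.
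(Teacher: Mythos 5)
Your proof is correct and follows essentially the same route as the paper: the first assertion comes from the inclusion $\H_X\subseteq\H_{X'}$ (which you justify via condition (v); the paper simply asserts it), and the second from observing that the finitely many elements of $X'\setminus X$ meet only finitely many free factors in the decomposition $G=\langle X,\H_X\rangle\ast(\ast_{H\in\H\setminus\H_X}H)$, so that regrouping yields condition (iv) for $X'$ with the finite family $\H_X\cup T$ (the paper's $\H'$). The deduction of the ``in particular'' statement is the same routine combination of the two assertions.
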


\begin{proof}
Since we have $\H_X\subset \H_{X'}$, 
$\H_X$ is finite if $\H_{X'}$ is finite. 

Suppose that $\H_X$ is finite and that $Y=X'\setminus X$ is finite. 
Since $Y$ is finite, 
there exists a finite subfamily $\H'$ of $\H$ such that $\H_{X}\subset \H'$ 
and all elements of $Y$ belong to 
$\langle X,\H_{X}\rangle \ast(\ast_{H\in \H'\setminus\H_X}H)=\langle X,\H'\rangle$. 
Since 
$G=\langle X,\H_{X}\rangle \ast(\ast_{H\in \H'\setminus\H_X}H)\ast(\ast_{H\in \H\setminus \H'}H)
=\langle X, \H'\rangle \ast (\ast_{H\in \H\setminus \H'}H)$, 
$Y$ has no relation with $H\in \H\setminus \H'$. 
Hence $\H'$ includes $\H_{X'}$
and thus $\H_{X'}$ is finite.  
\end{proof}

\section{Relative hyperbolicity for groups}\label{geometry}

In this section we discuss definitions of relative hyperbolicity for groups.

\subsection{Equivalent definitions of relative hyperbolicity}
We show the following. 

\begin{Prop}\label{relhypeq}
Let $G$ be a group and $\H$ be a family of non-trivial subgroups of $G$.
Suppose that $(G,\H)$ has a finite relative generating system $X$. 
Then the following conditions are equivalent: 
\begin{enumerate}
\item[(i)] The coned-off Cayley graph $\widehat{\Gamma}(G,\H,X)$ is hyperbolic, fine 
and $(G,\H,X)$ satisfies Condition $(a)$; 
\item[(ii)] $(G,\H)$ has a finite relative presentation $(X,\mathcal R)$ 
whose relative Dehn function is linear; 
\item[(iii)] The relative Cayley graph $\overline{\Gamma}(G,\H,X)$ is hyperbolic 
and $(G,\H,X)$ satisfies BCP property and Condition $(a)$;
\item[(iv)] The family $\H$ is hyperbolically embedded in $G$ with respect to $X$
(in the sense of \cite{D-G-O12}) and $(G,\H,X)$ satisfies Condition $(a)$.  
\end{enumerate}
The family $\H$ is {\it hyperbolically embedded} in $G$ with respect to $X$ if 
$\overline{\Gamma}(G,\H,X)$ is hyperbolic and 
for any $H\in \H$ and any positive integer $n$, 
the set of vertices of $H$ connected to $1$ by paths with at most length $n$
in $\overline{\Gamma}(G,\H,X)$ without edges belonging to $H\times (H\setminus \{1\})$
is finite (see \cite[Theorem 4.24 a) and Definition 4.25]{D-G-O12} for details).   
\end{Prop}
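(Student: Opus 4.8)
The plan is to prove the four conditions equivalent by a cycle of implications, treating Condition $(a)$ as a common backbone. First I would observe that each of (i)--(iv) entails Condition $(a)$: for (i), (iii), (iv) it is explicitly part of the statement, while for (ii) it follows from Remark \ref{aut}, since a finite relative presentation of $(G,\H)$ forces $(G,\H,X)$ to satisfy Condition $(a)$. Thus the genuine content is to show that, \emph{granting} Condition $(a)$, the three geometric/combinatorial hypotheses (coned-off hyperbolic and fine; relative Cayley hyperbolic with BCP; hyperbolically embedded) and the analytic one (linear relative Dehn function) coincide.

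Next I would exploit Condition $(a)$ to cut the problem down to a finite family. By definition there is a finite subfamily $\H'\subset\H$ with $G=\langle X,\H'\rangle\ast\left(\ast_{H\in\H\setminus\H'}H\right)$; write $K=\langle X,\H'\rangle$, so that $X$ is a finite relative generating system of $(K,\H')$ and $(K,\H',X)$ trivially satisfies Condition $(a)$ because $\H'$ is finite. The free factors in $\H\setminus\H'$ do not interact with $X$, so in both $\widehat{\Gamma}(G,\H,X)$ and $\overline{\Gamma}(G,\H,X)$ they contribute only tree-like appendages attached along single cone-vertices (equivalently, via isolated $\H$-components with coinciding endpoints, by Lemma \ref{cycle}). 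The core of the reduction is a family of transfer lemmas asserting that hyperbolicity, fineness, BCP property, hyperbolic embeddedness, and linearity of the relative Dehn function are all inherited in both directions between $(G,\H,X)$ and $(K,\H',X)$; Lemma \ref{iso} already supplies the isometric embedding $\langle X,\H'\rangle\hookrightarrow G$ that anchors these comparisons, and Lemma \ref{dict} provides the dictionary between $\widehat{\Gamma}$ and $\overline{\Gamma}$ used repeatedly below.

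With the family now finite, I would run the classical cycle. For (ii)$\Rightarrow$(i) I would feed a linear isoperimetric inequality over the finite relative presentation into a van Kampen / Bowditch argument, obtaining thin triangles (hyperbolicity of $\widehat{\Gamma}$) and, from the finiteness of $\mathcal R$ together with $\Omega$, the fineness of $\widehat{\Gamma}$. For (i)$\Rightarrow$(iii) I would translate along $\pi$ using Lemma \ref{dict}: hyperbolicity passes between $\widehat{\Gamma}$ and $\overline{\Gamma}$, and fineness of $\widehat{\Gamma}$ yields BCP by bounding the number of circuits through a fixed edge, hence controlling the cosets penetrated by one quasigeodesic but not by another. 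For (iii)$\Rightarrow$(ii) I would reconstruct a finite relative presentation from the hyperbolic relative Cayley graph and read linearity of $\delta^{rel}_{G,\H}$ off the combination of hyperbolicity and BCP. Finally, for the equivalence with (iv), I would match the DGO condition to fineness: the finiteness of the set of $H$-vertices joined to $1$ by short paths in $\overline{\Gamma}$ avoiding $H\times(H\setminus\{1\})$ is exactly the local-finiteness counterpart of fineness of $\widehat{\Gamma}$, and hyperbolicity of $\overline{\Gamma}$ is already shared with (iii); so (iv) slots into the cycle.

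The main obstacle I anticipate is twofold and lies precisely where this note departs from the literature. First, none of the classical statements is available off the shelf in the present generality --- $G$ may be uncountable and $\H$ infinite --- so each transfer lemma in the reduction must be proved by hand rather than quoted; the delicate point is checking that the tree-like contribution of the free factors $\ast_{H\in\H\setminus\H'}H$ genuinely preserves BCP and the DGO finiteness condition, since these quantify over \emph{all} $H\in\H$, including the infinitely many split-off factors. Second, the purely geometric core equivalences, though morally classical, must be re-examined so that no step secretly invokes countability of $G$ or local finiteness of the graphs; the arguments for fineness $\Leftrightarrow$ BCP and for the DGO condition are the ones most at risk, and I would spend the bulk of the effort there.
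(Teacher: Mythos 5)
Your plan is coherent in outline, but it differs from the paper's proof in two respects, and in both places the genuinely hard content is flagged rather than supplied. The first difference is the wholesale reduction to a finite family: you split $G=\langle X,\H'\rangle\ast\left(\ast_{H\in\H\setminus\H'}H\right)$ once and for all and claim that hyperbolicity, fineness, BCP, hyperbolic embeddedness and linearity of the relative Dehn function all transfer in both directions between $(G,\H,X)$ and $(K,\H',X)$. The paper performs this reduction only where it is unavoidable, namely in (iv)$\Rightarrow$(ii), where \cite[Proposition 4.28 b)]{D-G-O12} is stated for finite families; there the only transfers needed are the isometric embedding of Lemma \ref{iso} and the identification of relative presentations and Dehn functions via Lemma \ref{cycle}, both of which are cheap. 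By contrast, your transfer lemmas for BCP and for the DGO finiteness condition quantify over all $H\in\H$ and over all quasigeodesics of the ambient graph; establishing them is comparable in difficulty to proving the target implications directly, and in the proposal they are asserted, not proved.

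The second and more serious gap is the appeal to ``the classical cycle'' after the reduction. Since $K=\langle X,\H'\rangle$ can still be uncountable, none of the classical statements applies off the shelf, as you yourself observe; so the reduction does not land on known ground, and the steps you list as obstacles to be re-examined are precisely the content of the proof. Moreover your orientation of the cycle requires (i)$\Rightarrow$(iii), i.e.\ ``fine $\Rightarrow$ BCP'', an implication the paper deliberately never proves: it routes (ii)$\Rightarrow$(iii)$\Rightarrow$(i)$\Rightarrow$(iv)$\Rightarrow$(ii), quoting \cite[Theorem 2.14]{MP11} for (ii)$\Rightarrow$(iii) and \cite[Proposition 4.28]{D-G-O12} for (ii)$\Leftrightarrow$(iv) (both valid in the required generality), and proving by hand only (iii)$\Rightarrow$(i) --- BCP plus Condition $(a)$ implies fineness, via the circuit-counting argument of Lemmas \ref{circuit} and \ref{BCP&(*)}, where the finiteness of $\H_X$ enters the bound $\{\#(X\sqcup Y)\}^{a(n,0)\cdot n}\cdot{}_{a(n,0)\cdot n}C_2\cdot\#\H_X$ --- and (i)$\Rightarrow$(iv), via Lemma \ref{locfinite}. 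To complete your plan you would need either to prove ``fine $\Rightarrow$ BCP'' for uncountable groups or to reorient the cycle as the paper does; as it stands the proposal contains neither.
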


\begin{Rem}
Recall that $(G,\H)$ is said to be relatively hyperbolic if 
there exists a finite relative presentation of $(G,\H)$ 
whose relative Dehn function is linear.
Also note that the condition (i) in Proposition \ref{relhypeq} 
is independent of choice of finite relative generating systems of $(G,\H)$ 
by Lemmas \ref{choiceX} and \ref{(*)}. 
Hence $(G,\H)$ is relatively hyperbolic if and only if 
$(G,\H)$ is finitely relatively generated and 
some (resp. any) finite relative generating system $X$ of $(G,\H)$ satisfies
one of (mutually equivalent) four conditions in Proposition \ref{relhypeq}.
\end{Rem}
 
If $G$ is countable and $\H$ is finite, it is well-known 
that (i), (ii) and (iii) are equivalent 
(see \cite[Appendix A]{Dar03}, \cite[Appendix]{Osi06a}, \cite[Section 3]{Hru10}, 
\cite[Theorem 2.14]{MP11} and \cite[Theorem 2.18]{Yan12}).  
Note that if $\H$ is finite, then $(G,\H,X)$ automatically satisfies Condition $(a)$. 

Even if $G$ and $\H$ are not necessarily countable nor finite, respectively, 
some implications follows from known facts.
Indeed since $(G,\H, X)$ satisfies Condition $(a)$ whenever 
$(X,\mathcal R)$ is a finite relative presentation of $(G,\H)$, 
it follows from \cite[Proposition 4.28 a)]{D-G-O12} that (ii) implies (iv) 
and also it follows from \cite[Theorem 2.14]{MP11} that (ii) implies (iii). 
We remark that \cite[Theorem 2.14]{MP11} also claims the following: 

\begin{Prop}\label{k-similar} 
Suppose that $(G,\H)$ has a finite relative presentation $(X,\mathcal R)$ 
whose relative Dehn function is linear. 
Then $(G,\H)$ has a finite relative generating system $Y\supset X$
satisfying the following: 
For any $\mu\ge 1$, $C\ge 0$, $k\ge 0$, there exists a constant $K=K(\mu, C, k)$ 
such that for any $(\mu,C)$-quasigeodesics $p$ and $q$ which are $k$-similar by $d_X$ 
and without backtracking in $\overline{\Gamma}(G,\H,X)$, 
the sets of phase vertices of $p$ and $q$ are contained 
in the closed $K$-neighborhoods with respect to $d_Y$ of each other. 
Here the paths $p$ and $q$ are said to be {\it $k$-similar} by $d_X$
if there exists $k\ge 0$ such that $d_X(p_-,q_-)\le k$ and $d_X(p_+,q_+)\le k$.
\end{Prop}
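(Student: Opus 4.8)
The plan is to extract the needed geometric inputs from relative hyperbolicity, reduce the $k$-similar case to the case of equal endpoints, and then invoke the fellow-traveling of phase vertices for quasigeodesics without backtracking sharing endpoints. Since $(G,\H)$ has a finite relative presentation with linear relative Dehn function, Proposition \ref{relhypeq} gives that $\overline{\Gamma}(G,\H,X)$ is hyperbolic and that $(G,\H,X)$ satisfies BCP property. Let $Y_0$ be a finite relative generating system witnessing BCP property as in Definition \ref{BCP'}. Replacing $Y_0$ by $Y:=X\sqcup Y_0$, which is again a finite relative generating system by Lemma \ref{choiceX}, we may assume $Y\supset X$; this preserves BCP property because enlarging the generating system only decreases distances, so the bound $d_{Y_0}(s_-,s_+)\le a$ continues to hold for $d_Y$. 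This $Y$ will be the generating system in the statement.

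Second, I would reduce to equal endpoints. Given $k$-similar $(\mu,C)$-quasigeodesics $p,q$ without backtracking, choose $d_X$-geodesics $\alpha$ from $p_-$ to $q_-$ and $\beta$ from $q_+$ to $p_+$, each of length at most $k$; since $d_X$ is the graph metric of $\Gamma(G,X)$, these lie in $\Gamma(G,X)\subset\overline{\Gamma}(G,\H,X)$ and consist only of edges labeled by $X$. Set $\tilde q:=\alpha q\beta$, a path from $p_-$ to $p_+$. The point of requiring $k$-similarity with respect to $d_X$ rather than $d_{X\sqcup\mathcal H}$ is exactly that $\alpha$ and $\beta$ carry no $\H$-component, so the $\H$-components of $\tilde q$ coincide with those of $q$ together with their connection relation; hence $\tilde q$ is without backtracking precisely because $q$ is. Moreover, attaching paths of length at most $k$ to the ends of a $(\mu,C)$-quasigeodesic yields a $(\mu',C')$-quasigeodesic with $\mu',C'$ depending only on $\mu,C,k$.

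Third, I would apply the equal-endpoints result to $p$ and $\tilde q$. By hyperbolicity of $\overline{\Gamma}(G,\H,X)$ the two paths coarsely fellow-travel in $d_{X\sqcup\mathcal H}$, and BCP property then upgrades this to the conclusion that every phase vertex of $p$ lies in a bounded $d_Y$-neighborhood of a phase vertex of $\tilde q$ and conversely, with the bound depending only on $\mu',C'$, hence on $\mu,C,k$. This fellow-traveling of phase vertices for quasigeodesics without backtracking having the same endpoints is the content of \cite[Theorem 2.14]{MP11}. Finally, since $\tilde q$ differs from $q$ only in the initial and terminal segments $\alpha,\beta$, each of $d_Y$-diameter at most $k$, every phase vertex of $q$ is a phase vertex of $\tilde q$ and every phase vertex of $\tilde q$ is within $d_Y$-distance $k$ of a phase vertex of $q$; absorbing $k$ into the constant yields $K=K(\mu,C,k)$ and the claim. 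The main obstacle is the equal-endpoints step, where passing from the coarse $d_{X\sqcup\mathcal H}$-fellow-traveling supplied by hyperbolicity to genuine $d_Y$-proximity of phase vertices forces one to control the cosets penetrated by only one of the two paths, which is exactly where BCP property and the without-backtracking hypothesis are indispensable; the endpoint reduction, by contrast, is routine once one observes that $d_X$-connectors are invisible to the coset-penetration structure.
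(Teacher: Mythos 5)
Your argument is sound and, at bottom, takes the same route as the paper: the paper gives no independent proof of Proposition \ref{k-similar} but simply records it as a claim of \cite[Theorem 2.14]{MP11}, which is also where the real content of your equal-endpoints step lives (your ``hyperbolicity plus BCP upgrades to phase-vertex proximity'' is exactly the nontrivial part you are outsourcing to that citation). Your reduction from $k$-similar paths to paths with common endpoints --- attaching $d_X$-geodesic connectors, which carry no $\H$-components and hence preserve both the quasigeodesic constants and the without-backtracking property --- is correct, but it is already subsumed by the way the paper invokes \cite[Theorem 2.14]{MP11}, namely for $k$-similar paths directly.
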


Also it follows from \cite[Proposition 4.28 b)]{D-G-O12} that (iv) implies (ii) 
in view of Lemma \ref{cycle} (vi).
Indeed when $\H$ is hyperbolically embedded in $G$ with respect to $X$, 
$\H_X$ is hyperbolically embedded in $G_0:=\langle X,\H_X\rangle$ 
because $\overline{\Gamma}(G_0, \H_X, X)$ 
is naturally isometrically embedded into $\overline{\Gamma}(G, \H, X)$
by Lemma \ref{iso}.
When $(G,\H,X)$ satisfies Condition $(a)$, $\H_X$ is finite. 
Hence \cite[Proposition 4.28 b)]{D-G-O12} implies that $(G_0,\H_X)$ has 
a finite relative presentation $(X,\mathcal R_0)$ 
whose relative Dehn function is linear. 
Then $(X,\mathcal R_0)$ can be regarded as 
a finite relative presentation of $(G,\H)$ by Lemma \ref{cycle} (iv).  
Then the relative Dehn function of $(G,\H)$ with respect to $(X,\mathcal R_0)$ is linear
because it is equal to 
the relative Dehn function of $(G_0,\H_X)$ with respect to $(X,\mathcal R_0)$.

In order to complete a proof of Proposition \ref{relhypeq}, 
we show that (i) implies (iv) and that (iii) implies (i). 

The following shows that (i) implies (iv). 
\begin{Lem}\label{locfinite}
Let $X$ be a generating system of $(G,\H)$.
Suppose that $\widehat{\Gamma}(G,\H,X)$ is fine.
Then for any $H\in \H$ and any positive integer $n$, 
the set of vertices of $H$ connected to $1$ by paths with length at most $n$
in $\overline{\Gamma}(G,\H,X)$ without edges belonging to $H\times (H\setminus \{1\})$
is finite.   
\end{Lem}
\begin{proof}
Take a positive integer $n$, $H\in \H$, $h\in H\setminus\{1\}$ and a shortest path $q$ from $h$ to $1$ 
in $\overline{\Gamma}(G,\H,X)$ without edges belonging to $H\times (H\setminus \{1\})$. 
Suppose that length of $q$ is at most $n$. 
We denote by $e$ the edge from $1$ to $h$ labeled by $h\in H\setminus \{1\}$. 
Then $p:=eq$ is a locally minimal circuit without backtracking in $\overline{\Gamma}(G,\H,X)$, 
whose length is at most $n+1$.
When we consider the cycle $\pi(p)$ in $\widehat{\Gamma}(G,\H,X)$ 
which is constructed on the above of Lemma \ref{dict}, 
this is a circuit by Lemma \ref{dict} (3)(d).
Note that $\pi(p)$ contains the cone-biedge $[1,v(H)][v(H),h]$ 
and has length at most $2(n+1)$.
When $\widehat{\Gamma}(G,\H,X)$ is fine, 
cardinality of circuits containing $[1,v(H)]$  
with length at most $2(n+1)$ is finite. 
Hence the assertion follows. 
\end{proof}

Finally we show that (iii) implies (i) (refer to \cite[A.2 Proposition 1]{Dar03}).
\begin{Lem}\label{circuit}
Suppose that $(G,\H)$ has a finite relative generating system $X$ and 
$(G, \H, X)$ satisfies BCP property. 
We take the finite relative generating system $Y$ of $(G,\H)$ in Definition \ref{BCP}. 
Let $\widehat{c}$ be a circuit of length $L\ge 1$ in $\widehat{\Gamma}(G,\H,X)$.
Then we obtain a cycle $c$ in $\Gamma(G, X\sqcup Y)$ 
by replacing any cone-biedge of $\widehat{c}$ with a path labeled by elements of $Y$ 
whose $d_Y$-length is at most $a=a(L,0)$. 
The length of $c$ is thus at most $aL$.
\end{Lem}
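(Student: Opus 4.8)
The plan is to isolate each cone-biedge of $\widehat c$ and compare it with the remainder of the circuit, invoking BCP property with parameters $\mu=L$ and $C=0$. First I would record the structural fact that, because $\widehat c$ is a circuit, every vertex---in particular every cone-vertex $v(gH)$---is visited at most once. Hence $\widehat c$ is a concatenation of edges labelled by $X$ and of cone-biedges, each cone-biedge passing through a distinct cone-vertex $v(gH)$ exactly once and therefore penetrating the coset $gH$ exactly once. The two endpoints $g',g''\in G$ of such a cone-biedge are distinct, since otherwise either two origins of $\widehat c$ would coincide or the condition $e_1\neq e_l^{-1}$ would fail; in particular $d_Y(g',g'')\ge 1$, so $a:=a(L,0)\ge 1$, and a circuit containing a cone-biedge has length $L\ge 3$.

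Fix one cone-biedge $s$, penetrating $gH$ and running from $g'$ to $g''$, and write $\widehat c=s\,r$ with $r$ the nonempty complementary arc from $g''$ back to $g'$. Set $p:=s$ and $q:=r^{-1}$, both paths from $g'$ to $g''$ with endpoints in $G$. The crucial point is that $p$ and $q$ are $(L,0)$-quasigeodesics without backtracking. Indeed $q$ is an arc of length at most $L$, so for any subpath $q'$ of positive length its endpoints are distinct and hence at distance at least $1$ in $\widehat\Gamma(G,\H,X)$, whence $l(q')\le L\le L\,d_{\widehat\Gamma(G,\H,X)}(q'_-,q'_+)$; being an arc, $q$ also penetrates each coset at most once and is thus without backtracking. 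The same holds for $p=s$: it penetrates only $gH$, and once, so it is without backtracking, and its length $2$ does not exceed $L$. Now $p$ penetrates $gH$ by the penetrating subpath $s$, while $q$ does not penetrate $gH$ because $v(gH)$ is used by $\widehat c$ only within $s$. Applying BCP property (Definition \ref{BCP}) with $\mu=L$ and $C=0$ then gives $d_Y(s_-,s_+)=d_Y(g',g'')\le a$.

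Since $\mu=L$ and $C=0$ are fixed, this bound holds simultaneously for every cone-biedge of $\widehat c$. I would then replace each cone-biedge (from $g'$ to $g''$) by a geodesic of $\Gamma(G,Y)$ from $g'$ to $g''$---which exists and has length at most $a$ because $d_Y(g',g'')\le a$---while leaving every $X$-labelled edge untouched. As these replacements preserve endpoints, the result is a cycle $c$ in $\Gamma(G,X\sqcup Y)$. If $\widehat c$ has $b$ cone-biedges and $x$ edges labelled by $X$, then $L=2b+x$ and the length of $c$ is at most $ab+x$; since $a\ge 1$ this is at most $a(b+x)\le a(2b+x)=aL$, as claimed.

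The step I expect to require the right viewpoint is recognizing that the long complementary arc $q$ is automatically an $(L,0)$-quasigeodesic: this is what allows the single constant $a(L,0)$ supplied by BCP property to control every cone-biedge of $\widehat c$ at once, and it rests on the elementary observation that positive-length subpaths of an arc have endpoints at graph distance at least $1$. The other point to treat carefully is verifying that $q$ genuinely fails to penetrate $gH$---rather than re-entering it through some other cone-biedge---which is exactly where the defining distinct-vertex property of a circuit enters.
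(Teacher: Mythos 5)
Your proof is correct and follows essentially the same route as the paper's: both rest on the observation that subarcs of a circuit of length $L$ are $(L,0)$-quasigeodesics without backtracking which penetrate no common cosets, so that BCP property with parameters $(L,0)$ bounds $d_Y(s_-,s_+)$ for each cone-biedge $s$. The only cosmetic difference is the decomposition --- the paper splits $\widehat{c}$ once into two complementary subarcs at two $G$-vertices and applies BCP to the cone-biedges of one against the other, whereas you compare each cone-biedge directly with its complementary arc --- and both work for the same reason.
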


\begin{proof}
Since any circuit of length $L\le 2$ in $\widehat{\Gamma}(G,\H,X)$ 
does not contain any cone-biedge, we assume that $L\ge 3$. 

Let $\widehat{c}$ be a circuit of length $L$ in $\widehat{\Gamma}(G,\H,X)$.
Then this is a locally minimal path without backtracking. 
Since $L\ge 3$, the circuit $\widehat c$ contains two different vertices $u,v\in G$. 
We divide $\widehat c$ into two paths $\widehat p$ and $\widehat q$ with $\widehat c=\widehat p {\widehat q}^{-1}$, $\widehat p_-=\widehat q_-=u$ and $\widehat p_+=\widehat q_+=v$.
The paths $\widehat p$ and $\widehat q$ are locally minimal $(L,0)$-quasigeodesics without backtracking from $u$ to $v$. 
We hence obtain paths $p$ and $q$ in $\Gamma(G,X\sqcup Y)$ by replacing any cone-biedge of $\widehat p$ and $\widehat q$ with a path of  $d_Y$-length at most $a$ whose labels are elements of $Y$ because for any $H\in \H$, $\widehat p$ and $\widehat q$ penetrate no common cosets of $G/H$. 
Put $c=pq^{-1}$. 
The path $c$ is a cycle in $\Gamma(G,X\sqcup Y)$ whose length is at most $aL$. 
\end{proof}

\begin{Lem}\label{BCP&(*)}
Suppose that $(G,\H)$ has a finite relative generating system $X$.
If $(G,\H,X)$ satisfies BCP property and Condition $(a)$, 
then $\widehat{\Gamma}(G,\H,X)$ is fine.
\end{Lem}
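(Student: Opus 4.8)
The plan is to fix an edge $e$ of $\widehat{\Gamma}(G,\H,X)$ and a positive integer $n$, and to bound the number of circuits of length at most $n$ containing $e$. Let $g_0\in G$ be an endpoint of $e$ lying in $G$; such an endpoint exists whether $e$ is labeled by $X$ or is a cone-edge, and in either case $g_0$ is a phase vertex of every circuit through $e$ (it is an end of an edge labeled by $X$, or an end of a penetrating subpath). Using the finite relative generating system $Y$ supplied by the BCP property (Definition \ref{BCP}), set $A=\max_{1\le L\le n}a(L,0)$, which is finite. For each circuit $\widehat c$ of length at most $n$ through $e$, Lemma \ref{circuit} produces a cycle $c$ in $\Gamma(G,X\sqcup Y)$ of length at most $An$; since the replacement only alters cone-biedges and retains their $G$-endpoints together with all $X$-edges, $g_0$ is a vertex of $c$.

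The first main step is local finiteness. Because $X\sqcup Y$ is a finite set, the Cayley graph $\Gamma(G,X\sqcup Y)$ is locally finite, so only finitely many cycles of length at most $An$ pass through the fixed vertex $g_0$. Thus every circuit $\widehat c$ to be counted maps into a finite set $\mathcal D$ of cycles, and it remains to bound, for each $c\in\mathcal D$, the number of circuits $\widehat c$ giving rise to $c$.

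The second main step is this fiber count, and it is where Condition $(a)$ enters. A circuit $\widehat c$ mapping to $c$ is determined by recording which vertices of $c$ are its phase vertices (finitely many choices, as $c$ has boundedly many vertices) together with, for each consecutive pair $u,w$ of phase vertices, whether the corresponding portion of $\widehat c$ is an $X$-edge or a cone-biedge, and in the latter case which $H\in\H$ with $u^{-1}w\in H$ it penetrates (the cone-vertex being then the uniquely determined $v(uH)$). The assignment sending $\widehat c$ to $c$ together with this data is injective, so it suffices to see that for fixed $u\ne w$ the set $\{H\in\H\mid u^{-1}w\in H\}$ is finite. By Condition $(a)$ and Lemma \ref{cycle} (iv) there is a finite subfamily $\H_X$ with $G=\langle X,\H_X\rangle\ast(\ast_{H\in\H\setminus\H_X}H)$, and in this free product a nontrivial element lies in at most one factor $H\in\H\setminus\H_X$ and in at most $|\H_X|$ members of $\H_X$; hence $u^{-1}w$ (which is nontrivial since $u\ne w$) lies in at most $|\H_X|+1$ members of $\H$.

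The main obstacle is precisely this last finiteness. Without Condition $(a)$ a single element $u^{-1}w$ could lie in infinitely many $H\in\H$, so infinitely many distinct cone-biedges --- and thus infinitely many distinct circuits --- would collapse onto the same cycle $c$, and fineness would genuinely fail. Once the free-product decomposition bounds the number of such $H$, combining the finiteness of $\mathcal D$ with the finiteness of each fiber shows that only finitely many circuits of length at most $n$ contain $e$, which is exactly fineness of $\widehat{\Gamma}(G,\H,X)$.
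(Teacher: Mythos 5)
Your proof is correct and follows essentially the same route as the paper's: Lemma \ref{circuit} sends each circuit through $e$ to a cycle of bounded length in the locally finite graph $\Gamma(G,X\sqcup Y)$, and Condition $(a)$ bounds the fiber of this assignment. The only (harmless) variation is that you control the choice of $H$ for each cone-biedge via the free-product decomposition of Lemma \ref{cycle} (iv), showing a fixed nontrivial $u^{-1}w$ lies in at most $\#\H_X+1$ members of $\H$, whereas the paper invokes the equivalent condition (i) to say circuits only penetrate cosets of the finitely many $H\in\H_X$.
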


\begin{proof}
Let $e$ be an edge of $\widehat{\Gamma}(G,\H,X)$ and let $n$ be a positive integer.
If $n\le 2$, then the set of circuits of length at most $n$ containing $e$ is finite 
because such a circuit does not contain any cone-biedge and $X$ is finite. 

We thus assume that $n\ge 3$. 
Let us denote by $Y$ the finite relative generating system of $(G,\H)$ in Definition \ref{BCP}. 
For any circuit $\widehat c$ in $\widehat \Gamma(G,\H,X)$ containing $e$ of length $n$, 
a cycle $c$ of length at most $a(n,0)\cdot n$ 
in $\Gamma(G,X\sqcup Y)$ is constructed by Lemma \ref{circuit}. 
Since two different vertices of $c$ 
may be connected by a cone-biedge in $\widehat{c}$, 
the cycle $c$ can be obtained from 
at most $_{a(n,0)\cdot n}\!C_2\cdot\#\H_X$ mutually different circuits 
in $\widehat \Gamma(G,\H,X)$ by the way of Lemma \ref{circuit}
in view of Lemma \ref{cycle} (1). 
Here we denote the cardinality of $\H_X$ by $\#\H_X$, 
which is finite by Condition $(a)$.

Hence the cardinality of the set of circuits containing $e$ of length $n$ 
is at most $\{\#(X\sqcup Y)\}^{a(n,0)\cdot n}\cdot _{a(n,0)\cdot n}\!C_2\cdot\#\H_X$, 
which is finite.  
\end{proof}

By Lemma \ref{dict} (i) and Lemma \ref{BCP&(*)}, we obtain that (iii) implies (i).

\subsection{The case where the family of subgroups is finite}
We deal with the case where $\H$ is finite.
The following is based on \cite[Definition 2]{Bow12}. 

\begin{Def}\label{bow}
Let $\H$ be a finite family of infinite subgroups of $G$. 
Suppose that $G$ acts on a connected fine hyperbolic graph 
$\Gamma$ with finite edge stabilizers and finitely many orbits of edges, 
and $\H$ is a set of representatives of conjugacy classes of infinite vertex stabilizers. 
Then $\Gamma$ is called a {\it $(G,\H)$-graph}.
\end{Def}

The following is well-known if $G$ is finitely generated
(see \cite[Appendix A]{Dar03}).
\begin{Prop}
Let $G$ be a group and let $\H$ be a finite family of infinite subgroups of $G$. 
Then $G$ is hyperbolic relative to $\H$ 
if and only if there exists a $(G,\H)$-graph.
\end{Prop}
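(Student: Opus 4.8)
The plan is to prove both implications by recognizing the coned-off Cayley graph as the relevant $(G,\H)$-graph, and, conversely, by comparing an abstract $(G,\H)$-graph with a coned-off Cayley graph built from a relative generating system extracted from the action. Throughout I use that, since $\H$ is finite, $(G,\H,X)$ automatically satisfies Condition $(a)$ by Remark \ref{aut}, so that Proposition \ref{relhypeq} reduces relative hyperbolicity of $(G,\H)$ to the existence of a finite relative generating system $X$ for which $\widehat{\Gamma}(G,\H,X)$ is hyperbolic and fine.

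For the forward implication, suppose $G$ is hyperbolic relative to $\H$. Then $(G,\H)$ admits a finite relative presentation, hence a finite relative generating system $X$, and by Proposition \ref{relhypeq} the coned-off Cayley graph $\widehat{\Gamma}(G,\H,X)$ is hyperbolic and fine; it is connected because $X$ is a relative generating system. I would then check directly that $\widehat{\Gamma}(G,\H,X)$ is a $(G,\H)$-graph in the sense of Definition \ref{bow}: the group $G$ acts by left translation, every edge (whether labelled by $X$ or a cone-edge) has trivial, hence finite, stabiliser, and the orbits of edges are indexed by the finite sets $X$ and $\H$, so there are finitely many. The stabiliser of a vertex $g\in G$ is trivial, while the stabiliser of a cone-vertex $v(gH)$ is the conjugate $gHg^{-1}$; thus the infinite vertex stabilisers are exactly the conjugates of the members of $\H$, and $\H$ is a set of representatives of their conjugacy classes. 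This exhibits the required graph.

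For the converse, suppose a $(G,\H)$-graph $\Gamma$ is given. First I would produce a finite relative generating system. Since $G$ acts on the connected graph $\Gamma$ with finitely many orbits of edges and finite edge stabilisers, every vertex is an endpoint of an edge, so there are finitely many orbits of vertices, and Bass--Serre theory presents $G$ as the fundamental group of a finite graph of groups whose edge groups are finite, whose infinite vertex groups are conjugates of the members of $\H$, and whose remaining vertex groups are finite. Collecting the elements of the finite vertex and edge groups together with the finitely many stable letters yields a finite relative generating system $X$ of $(G,\H)$; in particular $(G,\H)$ is relatively finitely generated. Fixing a base vertex $o$ of $\Gamma$ and vertices $w_i$ with $\mathrm{Stab}(w_i)=H_i$, the assignments $g\mapsto g\cdot o$ on the vertices $g\in G$ and $v(gH_i)\mapsto g\cdot w_i$ on cone-vertices define a $G$-equivariant map $\widehat{\Gamma}(G,\H,X)\to \Gamma$, which an orbit-map argument of Milnor--Svarc type (using cocompactness on edges) shows to be a quasi-isometry. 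As hyperbolicity is a quasi-isometry invariant of graphs with their path metric, $\widehat{\Gamma}(G,\H,X)$ is hyperbolic.

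It then remains to transfer fineness from $\Gamma$ to $\widehat{\Gamma}(G,\H,X)$, and this I expect to be the main obstacle, since fineness is not a quasi-isometry invariant in general. The point is to exploit the equivariance together with the finiteness of edge stabilisers and of edge orbits, in the spirit of the transfer used for Lemma \ref{choiceX}(iii). Concretely, I would fix an edge $e$ and an integer $n$, and argue that any circuit of length at most $n$ through $e$ is carried by the quasi-isometry to a closed path of uniformly bounded length in $\Gamma$ running through a bounded neighbourhood of the image of $e$; fineness of $\Gamma$ bounds the number of such closed paths, while the finite edge stabilisers and the bounded multiplicity of the map bound the number of circuits lying over each image. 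Once fineness of $\widehat{\Gamma}(G,\H,X)$ is established, Proposition \ref{relhypeq} together with Remark \ref{aut} shows that $(G,\H)$ has a finite relative presentation with linear relative Dehn function, that is, $G$ is hyperbolic relative to $\H$, completing the proof.
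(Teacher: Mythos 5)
Your overall strategy coincides with the paper's: both reduce the statement to showing that a $(G,\H)$-graph exists if and only if the coned-off Cayley graph $\widehat{\Gamma}(G,\H,X)$ is itself a $(G,\H)$-graph for some finite relative generating system $X$, after which condition (i) of Proposition \ref{relhypeq} (with Condition $(a)$ free from Remark \ref{aut}) finishes the argument. The paper simply cites \cite[Proposition 4.3]{MP-W11a} for that equivalence. Your forward direction, which verifies directly that $\widehat{\Gamma}(G,\H,X)$ is a $(G,\H)$-graph, is essentially correct (modulo the small point that one should check distinct members of $\H$ are non-conjugate, so that $\H$ really is a set of representatives of the conjugacy classes of infinite vertex stabilizers).

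The converse is where you are doing the work that the citation hides, and your sketch has genuine gaps there. First, the comparison map $g\mapsto g\cdot o$ need not have finite fibers: every vertex of $\Gamma$ may have infinite stabilizer (e.g.\ the Bass--Serre tree of $A\ast_C B$ with $A$, $B$ infinite), so there is no ``bounded multiplicity of the map,'' and since the action is not proper, Milnor--\v{S}varc does not apply as stated; the quasi-isometry must be established by hand from the finiteness of edge orbits, as in \cite[Lemma 2.7]{MP-W11a}. More seriously, your fineness transfer rests on the claim that fineness of $\Gamma$ bounds the number of closed paths of bounded length in a bounded region. This is false for non-locally-finite graphs --- an infinite star is fine yet has infinitely many closed paths of length $2$ through its center --- and $(G,\H)$-graphs are never locally finite because the members of $\H$ are infinite. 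Fineness controls circuits (equivalently arcs), but the images of circuits of $\widehat{\Gamma}(G,\H,X)$ under your map are merely closed paths, so the counting does not go through as written. The standard repair, which is the content of \cite[Proposition 4.3]{MP-W11a} and of the argument behind Lemma \ref{choiceX}(iii), is to attach to $\Gamma$ finitely many $G$-orbits of edges with finite stabilizers one at a time, showing at each stage (via Bowditch's arc characterization of fineness) that fineness and hyperbolicity are preserved, until the resulting graph contains a copy of $\widehat{\Gamma}(G,\H,X)$; fineness then passes to subgraphs. Without some such argument the converse direction is not proved.
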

\begin{proof} 
Note that $(G,\H)$ satisfies Condition $(a)$ because $\H$ is finite. 
By \cite[Proposition 4.3]{MP-W11a}, 
there exists a $(G,\H)$-graph if and only if 
there exists a finite generating system $X$ of $G$ relative to $\H$, 
the coned-off Cayley graph $\widehat \Gamma(G,\H,X)$ is a $(G,\H)$-graph.
The assertion follows from condition (i) in Proposition \ref{relhypeq}. 
\end{proof}

\begin{Rem}
Another definition of relative hyperbolicity which allows the case 
where $G$ can be uncountable is given by Gerasimov \cite{Ger09} as follows: 
The group $G$ is said to be {\it hyperbolic relative to} a family $\H$ of subgroups of $G$
in the sense of Gerasimov 
if $G$ acts $3$-properly and $2$-cocompactly on a compact Hausdorff space $X$
and $\H$ is a set of representatives of conjugacy classes of maximal parabolic subgroups.  
Note that $\H$ is known to be finite (\cite[Main Theorem]{Ger09}).
When we suppose that $\H$ is finite, 
existence of the above $X$ is equivalent to existence of a $(G,\H)$-graph 
according to \cite[Section 9.1]{Ger10} and \cite[Proposition 7.1.2]{G-P11}. 
\end{Rem}


\section{Relative quasiconvexity for subgroups}\label{relqcdef}

In this section we study
relatively undistorted subgroups and relatively quasiconvex subgroups.
Theorems \ref{cap}, \ref{qc-undist} and \ref{4} are proved in Section 4.3.   

\subsection{Condition $(b)$ and relatively finitely generated subgroups}
In this section we study Condition $(b)$ (Definition \ref{**}) 
and relatively finitely generated subgroups. 
We give several lemmas which are used in Sections 4.2 and 4.3.

First we define relatively finitely generated subgroups.
\begin{Not} 
Take a subset $Y\subset G$. We put 
\begin{align*}
&\H_{L,Y}:=\{L\cap yHy^{-1} \ |\ H\in \H, y\in Y, L\cap yHy^{-1}\neq \{1\}  \},\\
&\mathcal H_{L,Y}:=\bigsqcup_{L\cap yHy^{-1}\in \H_{L,Y}} \{(L\cap yHy^{-1})\setminus \{1\}\}.
\end{align*}
\end{Not}

\begin{Def}\label{relfgdef} 
The subgroup $L$ is {\it finitely generated relative to} $\H$ in $G$ if 
there exists a finite subset $Y\subset G$ such that 
$L$ is finitely generated relative to $\H_{L,Y}$.
\end{Def}

\begin{Rem}\label{YY'}
On the above, 
if we take a finite subset $Y'\subset G$ such that $Y'\supset Y$, 
then $L$ is also finitely generated relative to $\H_{L,Y'}$.
\end{Rem}

\begin{Lem}\label{conjfg}
The subgroup $L$ is finitely generated relative to $\H$ in $G$ if and only if 
$gLg^{-1}$ is finitely generated relative to $\H$ in $G$ for every $g\in G$. 
\end{Lem}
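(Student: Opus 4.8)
The plan is to reduce the equivalence to a single implication and then transport the relative generating structure through the inner automorphism $\mathrm{conj}_g\colon x\mapsto gxg^{-1}$. The converse direction is immediate: specializing the hypothesis ``$gLg^{-1}$ is finitely generated relative to $\H$ in $G$ for every $g$'' to $g=1_G$ yields that $L$ itself is finitely generated relative to $\H$ in $G$. Hence it suffices to prove that if $L$ is finitely generated relative to $\H$ in $G$, then $gLg^{-1}$ is finitely generated relative to $\H$ in $G$ for each fixed $g\in G$.

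First I would record the elementary conjugation identity
\[
g\bigl(L\cap yHy^{-1}\bigr)g^{-1}=gLg^{-1}\cap (gy)H(gy)^{-1}\qquad(y\in G,\ H\in\H),
\]
verified by a direct double inclusion. In particular $L\cap yHy^{-1}\neq\{1\}$ holds if and only if $gLg^{-1}\cap(gy)H(gy)^{-1}\neq\{1\}$, so the nontriviality condition defining the families is preserved under conjugation.

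Next, assuming a finite subset $Y\subset G$ with $L$ finitely generated relative to $\H_{L,Y}$, I would set $Y':=gY$, which is again finite. Running $y'=gy$ over $Y'$ and applying the identity shows that $K\mapsto gKg^{-1}$ is a bijection from $\H_{L,Y}$ onto $\H_{gLg^{-1},Y'}$, and correspondingly identifies $\mathcal H_{gLg^{-1},Y'}$ with $g\,\mathcal H_{L,Y}\,g^{-1}$. Since $\mathrm{conj}_g$ restricts to a group isomorphism $L\to gLg^{-1}$, and since (by the characterization in Section~\ref{pre}) a subset is a relative generating system precisely when together with the disjoint union of the associated parabolic pieces it generates the ambient group, a finite relative generating system $S$ of $(L,\H_{L,Y})$ is carried to the finite set $gSg^{-1}$, which is a relative generating system of $(gLg^{-1},\H_{gLg^{-1},Y'})$. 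Thus $gLg^{-1}$ is finitely generated relative to $\H_{gLg^{-1},Y'}$ with $Y'$ finite, i.e.\ finitely generated relative to $\H$ in $G$ by Definition~\ref{relfgdef}.

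This is a routine transport-of-structure argument, so I do not expect a serious obstacle; the only care needed is the bookkeeping of the families $\H_{L,Y}$ and $\mathcal H_{L,Y}$, which are defined as sets and disjoint unions and may exhibit collapsing when distinct pairs $(H,y)$ yield the same intersection. Because $\mathrm{conj}_g$ is a bijection on the subgroups of $G$, it preserves exactly this collapsing, so the induced correspondence between the two families is genuinely bijective and structure-preserving, and no delicate estimates are involved.
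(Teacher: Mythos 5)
Your proposal is correct and follows essentially the same route as the paper: the paper's proof likewise observes that $\H_{gLg^{-1},gY}=\{gKg^{-1}\mid K\in\H_{L,Y}\}$ and that $gSg^{-1}$ is then a finite relative generating system of $(gLg^{-1},\H_{gLg^{-1},gY})$. Your additional remarks on the conjugation identity and the bijectivity of the induced correspondence are just a more explicit rendering of the same transport-of-structure argument.
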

\begin{proof}
Suppose that we have a finite subset $Y\subset G$ such that 
$L$ is finitely generated relative to $\H_{L,Y}$. 
Take a finite relative generating system $S$ of $(L, \H_{L,Y})$. 
For any $g\in G$, we have 
$\H_{gLg^{-1},gY}=\{gKg^{-1} \ |\ K\in \H_{L,Y}\}$.  
Then $gSg^{-1}$ is a finite relative generating system of $(gLg^{-1}, \H_{gLg^{-1},gY})$. 
\end{proof}

\begin{Lem}\label{cap'} 
Let $K$ be a subgroup of $G$. 
Suppose that both $K$ and $L$ 
satisfy Condition $(b)$ with respect to $\H$ in $G$. 
Then $K\cap L$ also satisfies Condition $(b)$ with respect to $\H$ in $G$. 
\end{Lem}
\begin{proof}
We take arbitrarily $y_1,y_2\in G$ with $(K\cap L)y_1 \cap (K\cap L)y_2=\emptyset$. 
Note that $(K\cap L)y_1 \cap (K\cap L)y_2=(Ky_1\cap Ky_2) \cap (Ly_1\cap Ly_2)=\emptyset$. 
If both $Ky_1\cap Ky_2\ne \emptyset$ and $Ly_1\cap Ly_2\ne \emptyset$ held, 
we would have $k\in K $ and $l\in L$ with $y_1=ky_2=ly_2$. 
This implies that $k=l \in K\cap L$ and contradicts 
$(K\cap L)y_1 \cap (K\cap L)y_2=\emptyset$. 
We thus obtain at least one of $Ky_1\cap Ky_2=\emptyset$ and $Ly_1\cap Ly_2=\emptyset$. 

Without loss of generality, we assume that $Ky_1\cap Ky_2=\emptyset$ holds. 
It follows that $\H_{K\cap L,y_1,y_2}$ is finite 
because $\H_{K\cap L,y_1,y_2} \subset \H_{K,y_1,y_2}$.  
\end{proof}

The following is often useful. 
\begin{Lem}\label{Y}
Suppose that $L$ is finitely generated relative to $\H$ in $G$. 
Then there exists a finite subset $Y\subset G$ 
such that $L$ is finitely generated relative to $\H_{L,Y}$
and all elements of $Y$ belong to mutually different right cosets of $L$ in $G$.
\end{Lem}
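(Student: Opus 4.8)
The plan is to start from any finite set $Y\subset G$ witnessing that $L$ is finitely generated relative to $\H$ in $G$, and then to thin it out to a single representative per right coset of $L$, absorbing the resulting discrepancy into finitely many extra generators. Concretely, I would fix such a finite $Y$ together with a finite relative generating system $S$ of $(L,\H_{L,Y})$, so that $S\sqcup\mathcal H_{L,Y}$ generates $L$. Since $Y$ is finite, it meets only finitely many right cosets of $L$, so I can choose a subset $Y_0\subseteq Y$ containing exactly one element from each such coset. Then $Y_0$ is finite and, by construction, its elements lie in mutually different right cosets of $L$; this $Y_0$ will be the required set.

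The key computation is a conjugacy identity. For $y\in Y$ let $y_0\in Y_0$ be the representative with $Ly=Ly_0$, and set $l_y:=yy_0^{-1}\in L$, so that $y=l_yy_0$. Then for every $H\in\H$,
\[
L\cap yHy^{-1}=L\cap l_y\bigl(y_0Hy_0^{-1}\bigr)l_y^{-1}=l_y\bigl(L\cap y_0Hy_0^{-1}\bigr)l_y^{-1},
\]
the last equality using $l_y\in L$. Thus every subgroup appearing in $\H_{L,Y}$ is an $L$-conjugate, by one of the finitely many elements $l_y$, of a subgroup appearing in $\H_{L,Y_0}$.

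With this identity in hand I would check that $S':=S\cup\{l_y\mid y\in Y\}$, which is finite, together with $\mathcal H_{L,Y_0}$ generates $L$. Indeed each element of $\mathcal H_{L,Y}$ has the form $l_yh_0l_y^{-1}$ with $h_0\in\mathcal H_{L,Y_0}$ and $l_y\in S'$, so it lies in $\langle S'\cup\mathcal H_{L,Y_0}\rangle$; since $S\subseteq S'$ and $S\sqcup\mathcal H_{L,Y}$ generates $L$, this forces $L\subseteq\langle S'\cup\mathcal H_{L,Y_0}\rangle$, and the reverse inclusion is immediate because all listed elements lie in $L$. Hence $L$ is finitely generated relative to $\H_{L,Y_0}$, which completes the proof.

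I do not expect a serious obstacle here: the verification of the conjugacy identity and the bookkeeping of generators are routine. The one point to handle carefully is that passing from $Y$ to the smaller $Y_0$ must not discard any of the subgroups $L\cap yHy^{-1}$, and this is exactly what the identity guarantees, since each such subgroup is merely rewritten as an $L$-conjugate of one indexed by $Y_0$ rather than thrown away. A minor subtlety is that $\H_{L,Y_0}$ should be read with the set-builder convention of the Notation, identifying distinct pairs $(y,H)$ that yield the same subgroup, but this has no effect on the generation argument.
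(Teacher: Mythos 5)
Your proof is correct and follows essentially the same route as the paper: the paper also discards redundant coset representatives one at a time, absorbing each discrepancy $l$ with $y'=ly$ into the finite relative generating system, which is exactly your conjugacy identity $L\cap yHy^{-1}=l_y(L\cap y_0Hy_0^{-1})l_y^{-1}$ applied iteratively. The only cosmetic difference is that you thin $Y$ to $Y_0$ in one step rather than element by element (and you should formally add the inverses $l_y^{-1}$ to $S'$ to keep the system symmetric, as the paper does).
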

\begin{proof}
Take a finite subset $Y\subset G$ such that 
$L$ is finitely generated relative to $\H_{L,Y}$ 
and also a finite relative generating system $S$ of $(L,\H_{L,Y})$.
If we have $y'=ly$ for some $y,y'\in Y$ and $l\in L$, 
then $S\sqcup\{l\}\sqcup\{l^{-1}\}$ is a finite relative generating system
of $(L,\H_{L,Y\setminus \{y'\}})$.
\end{proof}

The following implies that Condition $(b)$ is not strong. 
\begin{Lem}\label{weakrelfg}
Let $(G,\H)$ have a finite relative generating system $X$. 
Suppose that $(G,\H,X)$ satisfies Condition $(a)$.
If $L$ is finitely generated relative to $\H$ in $G$, 
then $L$ satisfies Condition $(b)$ with respect to $\H$ in $G$.
\end{Lem}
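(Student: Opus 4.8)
The plan is to combine the free factor decomposition supplied by Condition $(a)$ with an action of $L$ on the associated Bass--Serre tree, and to read off Condition $(b)$ from the first Betti number of the quotient graph.

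First I would reduce to the free factors. By Condition $(a)$ the subfamily $\H_X$ is finite, and Lemma \ref{cycle} (iv) gives a free decomposition $G=G_0\ast\left(\ast_{H\in\mathcal{J}}H\right)$ with $G_0:=\langle X,\H_X\rangle$ and $\mathcal{J}:=\H\setminus\H_X$. Fix $y,y'\in G$ with $Ly\neq Ly'$. Since $\H_{L,y,y'}\cap\H_X$ is finite, it suffices to bound $\H_{L,y,y'}\cap\mathcal{J}$. Let $T$ be the Bass--Serre tree of this free product: its central vertices are the elements of $G$ (on which $G$, hence $L$, acts freely on the left by left translation), its remaining vertices are the cosets $gH$ and $gG_0$, and $g$ is joined to each such coset containing it. Each $H\in\mathcal{J}$ determines a \emph{type} of leaf, and every element of $G$ preserves types.

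Next I would translate membership in $\H_{L,y,y'}$ into the geometry of $\Theta:=L\backslash T$. If $H\in\H_{L,y,y'}\cap\mathcal{J}$, choose $l\in L$ and $h\in H\setminus\{1\}$ with $l=yhy'^{-1}$, so that $yh=ly'$. Then the leaf $yH$ is joined in $T$ both to the central vertex $y\in Ly$ and to the central vertex $yh=ly'\in Ly'$, whence its $L$-orbit $[yH]$ is incident in $\Theta$ to the two central-vertex orbits $[Ly]$ and $[Ly']$, which are \emph{distinct} precisely because $Ly\neq Ly'$. Because $L$ preserves types, distinct $H\in\mathcal{J}$ produce distinct leaf orbits $[yH]$ and hence distinct incident edges; thus each $H\in\H_{L,y,y'}\cap\mathcal{J}$ yields a path $[Ly]\to[yH]\to[Ly']$ of length two in $\Theta$ joining the two fixed distinct vertices $[Ly]$ and $[Ly']$, and these paths are pairwise edge-disjoint away from their endpoints.

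Finally I would bound the number of such paths. Taking a finite subset $Y\subset G$ with $L$ finitely generated relative to $\H_{L,Y}$ (Definition \ref{relfgdef}) and a finite relative generating system $S$ of $(L,\H_{L,Y})$, every element of $\mathcal{H}_{L,Y}$ lies in a vertex stabilizer of $T$, i.e.\ in a vertex group of the graph of groups $\Theta$ (whose edge groups are trivial since the decomposition is a free product). Hence the natural retraction $L\to\pi_1(\Theta)$ onto the free group $\pi_1(\Theta)$ of rank $b_1(\Theta)$ kills $\mathcal{H}_{L,Y}$, so $\pi_1(\Theta)$ is generated by the images of $S$ and $b_1(\Theta)\leq\#S<\infty$. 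Any collection of pairwise edge-disjoint length-two paths joining a fixed pair of distinct vertices of $\Theta$ has cardinality at most $b_1(\Theta)+1$, since consecutive paths bound homologically independent cycles; therefore $\#\bigl(\H_{L,y,y'}\cap\mathcal{J}\bigr)\leq b_1(\Theta)+1$, and $\H_{L,y,y'}$ is finite. The main obstacle is this last step: making the $L$-action on $T$ precise and extracting the bound $b_1(L\backslash T)\leq\#S$ from relative finite generation (equivalently, that relative finite generation forces the underlying graph of $L\backslash T$ to have finite first Betti number), together with the careful verification that distinct factors in $\H_{L,y,y'}\cap\mathcal{J}$ really give edge-disjoint paths between the \emph{same} pair of distinct vertices. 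The reduction to $\mathcal{J}$ and the middle translation are routine once Condition $(a)$ is invoked.
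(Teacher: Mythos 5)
Your argument is correct, but it takes a genuinely different route from the paper. The paper's proof stays entirely inside the relative Cayley graph: it picks the element $l\in L\cap y_1Hy_2^{-1}$ nearest to $1$ in $\overline{\Gamma}(L,\H_{L,Y},S)$, expands a geodesic from $l$ to $1$ into a path of $\overline{\Gamma}(G,\H,X\sqcup Y\sqcup Y^{-1}\sqcup S)$, closes it up with the three edges labeled $y_1$, $h$, $y_2^{-1}$, and performs a surgery on connected $H$-components to produce a cycle in which some edge labeled by $H\setminus\{1\}$ is an isolated $H$-component with distinct endpoints; Lemma \ref{cycle} (iii) then forces $H\in\H_{X\sqcup Y\sqcup Y^{-1}\sqcup S}$, which is finite by Lemma \ref{(*)}, so $\H_{L,y_1,y_2}\subset\H_{X\sqcup Y\sqcup Y^{-1}\sqcup S}$. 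You instead pass to the Bass--Serre tree of the splitting $G=\langle X,\H_X\rangle\ast(\ast_{H\in\H\setminus\H_X}H)$ given by Lemma \ref{cycle} (iv) and count internally disjoint length-two paths between the two distinct vertices $[Ly]$ and $[Ly']$ of $L\backslash T$. I checked the points you flagged as delicate and they do go through: edge stabilizers of $T$ are trivial, every group in $\H_{L,Y}$ (including those coming from $\H_X$, which land in a conjugate of $\langle X,\H_X\rangle$) sits in a vertex stabilizer, so the retraction of $L$ onto the free group $\pi_1(L\backslash T)$ kills $\mathcal H_{L,Y}$ and is still surjective, giving $b_1(L\backslash T)\le\#S$; and for each relevant $H$ the two edges of the path $[Ly]\to[yH]\to[Ly']$ lie in distinct $L$-orbits precisely because $Ly\ne Ly'$, while type-preservation separates the middle vertices for distinct $H$. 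Both proofs ultimately rest on the same free decomposition from Condition $(a)$, but yours buys an explicit uniform bound $\#\H_{L,y,y'}\le\#\H_X+\#S+1$ independent of $y,y'$, whereas the paper's identifies $\H_{L,y_1,y_2}$ inside the concrete finite subfamily $\H_{X\sqcup Y\sqcup Y^{-1}\sqcup S}$ and keeps the argument in the component/cycle formalism reused throughout Section 4 (e.g.\ in Lemma \ref{2''}); the cost of your approach is importing Bass--Serre theory, which the paper otherwise avoids.
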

\begin{proof}
Take $y_1,y_2\in G$ such that $Ly_1\neq Ly_2$.
We take a finite subset $Y\subset G$ and 
a finite generating system $S$ of $L$ relative to $\H_{L,Y}$.
We can assume that 
$y_1$ and $y_2$ belong to $Y$ and 
all elements of $Y$ represent
mutually different right cosets of $L$ in $G$ by Remark \ref{YY'} and Lemma \ref{Y}. 
Suppose that $H$ is an element of $\H_{L,y_1,y_2}$. 
Then we take an element $l=y_1hy_2^{-1}\in L\cap y_1Hy_2^{-1}$ 
which is one of the nearest vertices from $1$ by $d_{S\sqcup \mathcal H_{L,Y}}$
among $L\cap y_1Hy_2^{-1}$. 
Take a geodesic $p$ of $\overline{\Gamma}(L,\H_{L,Y},S)$ from $l$ to $1$.
We note that $p$ is trivial if $l=1$. 
When we replace each edge $e$ of $p$ labeled by some $yh'y^{-1}\in (L\cap yH'y^{-1})\setminus \{1\}$ 
for some $L\cap yH'y^{-1}\in \H_{L,Y}$ with a path of three edges labeled by 
$y\in Y$, $h'\in H'\setminus \{1\}$ and $y^{-1}\in Y^{-1}$, we have a path $p'$ 
of $\overline{\Gamma}(G,\H,X\sqcup Y\sqcup Y^{-1}\sqcup S)$. 
Now we take the path $q$ from $1$ to $l$ of three edges $e_1$, $e_2$ and $e_3$ 
labeled by $y_1\in Y$, $h\in H$ and $y_2^{-1}\in Y^{-1}$, respectively. 
Then $qp'$ is a cycle in $\overline{\Gamma}(G, \H, X\sqcup Y\sqcup Y^{-1}\sqcup S)$. 
If $e_2$ is an isolated $H$-component in $qp'$, then 
$H\in \H_{X\sqcup Y\sqcup Y^{-1}\sqcup S}$ by Lemma \ref{cycle} (iii).
We note that $e_2$ satisfies the case if $l=1$.
If not, then we have an $H$-component $e_4$ connected to $e_2$
such that there exists no $H$-component
which is connected to $e_2$ on the subpath $r$ of $qp'$ 
from $(e_2)_+$ to $(e_4)_-$. 
Assume that $(e_2)_+=(e_4)_-$. 
Since all elements of $Y$ belong to mutually different right cosets of $L$ in $G$, 
we have an edge $e_5$ labeled by $y_2^{-1}$ such that $e_3^{-1}e_4e_5$ is a subpath 
of $p'$ from $l$ to some $l'\in L$. 
Since we have a path $e_1e_2e_4e_5$ of 
$\overline{\Gamma}(G, \mathbb{H},X\sqcup Y\sqcup Y^{-1}\sqcup S)$ 
from $1$ to $l'$, 
the element $l'$ lie in $y_1 H y_2^{-1}$.
Then the subpath $s$ of $p$ from $l'$ to $1$ is shorter than $p$. 
This contradicts choice of $l$. 
Hence we have $(e_2)_+\neq (e_4)_-$. 
When we denote by $e_6$ the edge from $(e_4)_-$ to $(e_2)_+$ 
labeled by an element of $H\setminus \{1\}$, 
$e_6r$ is a cycle, 
where $e_6$ is an isolated $H$-component with different ends. 
Hence $H\in \H_{X\sqcup Y\sqcup Y^{-1}\sqcup S}$ by Lemma \ref{cycle} (iii).
Thus $\H_{L,y_1,y_2}\subset \H_{X\sqcup Y\sqcup Y^{-1}\sqcup S}$. 
Since $\H_{X\sqcup Y\sqcup Y^{-1}\sqcup S}$ is finite by Lemma \ref{(*)}, 
$\H_{L,y_1,y_2}$ is also finite. 
\end{proof}

\begin{Lem}\label{2''}
Let $(G,\H)$ have a finite relative generating system $X$. 
Suppose that $(G,\H,X)$ satisfies Condition $(a)$.
Let $L$ be finitely generated relative to $\H$ in $G$. 
For a finite subset $Y\subset G$ such that 
all elements of $Y$ belong to mutually different right cosets of $L$ in $G$, 
when $S$ is a finite relative generating system of $(L,\H_{L,Y})$, 
$(L, \H_{L,Y}, S)$ satisfies Condition $(a)$.  
\end{Lem}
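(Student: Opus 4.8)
The plan is to prove the equivalent statement that the smallest subfamily $\H'$ of $\H_{L,Y}$ satisfying the (mutually equivalent) conditions of Lemma \ref{cycle} for the triple $(L,\H_{L,Y},S)$ is finite; by the characterization of Condition $(a)$ through finiteness of this smallest subfamily, this is exactly what must be shown. Writing $(\H_{L,Y})_S$ for that subfamily and using form (v) of Lemma \ref{cycle}, an element $K\in\H_{L,Y}$ lies in $(\H_{L,Y})_S$ if and only if two distinct elements of $K$ can be joined by a path of $\overline{\Gamma}(L,\H_{L,Y},S)$ using no edge in $K\times(K\setminus\{1\})$. Since $Y$ is finite and each $K\in\H_{L,Y}$ equals $L\cap yHy^{-1}$ for some $y\in Y$ and $H\in\H$, it suffices to show that for each such $K\in(\H_{L,Y})_S$ the subgroup $H$ lies in a finite subfamily of $\H$ depending only on $y$; then $(\H_{L,Y})_S$ is contained in the finite set $\{\,L\cap yHy^{-1}\mid y\in Y,\ H\in(\text{that finite subfamily})\,\}$.

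So I fix $K=L\cap yHy^{-1}\in(\H_{L,Y})_S$ and distinct $l_1,l_2\in K$ joined in $\overline{\Gamma}(L,\H_{L,Y},S)$ by a path $p$ avoiding edges of $K\times(K\setminus\{1\})$. Left translation by $l_1^{-1}\in K$ is a label-preserving automorphism carrying such edges to such edges, so I may assume $l_1=1$ and $l_2=l=yhy^{-1}$ with $h\in H\setminus\{1\}$. Adjoining the edge $e_K$ from $l$ to $1$ labelled $l^{-1}\in K\setminus\{1\}$ produces a cycle $c=pe_K$. As in the lifting used in the proof of Lemma \ref{weakrelfg}, I replace each edge labelled by an element $y'h'y'^{-1}\in(L\cap y'H'y'^{-1})\setminus\{1\}$ of $\mathcal H_{L,Y}$ by the three consecutive edges labelled $y'$, $h'$, $y'^{-1}$, obtaining a cycle $c'$ in $\overline{\Gamma}(G,\H,X')$ with $X'=X\sqcup Y\sqcup Y^{-1}\sqcup S$. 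The edge $e_K$ contributes to $c'$ a single $H$-edge $\tilde e$ from $yh$ to $y$, whose endpoints lie in the coset $yH$ and are distinct; moreover every $H$-component of $c'$ has length one, since each inserted edge labelled in $H\setminus\{1\}$ is flanked by edges labelled in $Y^{\pm 1}$ or $S$.

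Now I split into two cases according to whether $\tilde e$ is isolated in $c'$. If it is, then $c'$ is a cycle carrying an isolated $H$-component with distinct endpoints, so Lemma \ref{cycle} (iii) applied with $\H'=\H_{X'}$ forces $H\in\H_{X'}$; and $\H_{X'}$ is finite by Lemma \ref{(*)}, because $X\subset X'$ with $X'\setminus X$ finite and $(G,\H,X)$ satisfies Condition $(a)$. If $\tilde e$ is not isolated, there is another $H$-component $\tilde f$ of $c'$ connected to it; since $e_K$ contributes only $\tilde e$, this $\tilde f$ comes from a replaced edge of $p$ with subgroup $L\cap y'Hy'^{-1}$, and its initial vertex $gy'$ (with $g\in L$) lies in $yH$, so $g\in L\cap yHy'^{-1}$. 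The decisive point is that $y'\neq y$: were $y'=y$, then $g\in L\cap yHy^{-1}=K$ and the replaced edge would be an internal $K$-edge of $p$, contrary to the choice of $p$. Hence $y,y'$ are distinct elements of $Y$, so $Ly\neq Ly'$, and $g\in L\cap yHy'^{-1}\neq\emptyset$ shows $H\in\H_{L,y,y'}$, which is finite by Condition $(b)$ for $L$; here Condition $(b)$ for $L$ is available from Lemma \ref{weakrelfg}, since $L$ is finitely generated relative to $\H$ and $(G,\H,X)$ satisfies Condition $(a)$.

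Combining the two cases, the subgroup $H$ always lies in the finite family $\H_{X'}\cup\bigcup_{y'\in Y\setminus\{y\}}\H_{L,y,y'}$, which completes the reduction and hence the proof. I expect the main obstacle to be purely organizational: verifying that the lift $c'$ is a genuine cycle all of whose $H$-components have length one, and that a companion $H$-component connected to $\tilde e$ must arise from $p$ rather than from $e_K$. The conceptual heart is the observation that such a companion is forced to involve a second coset representative $y'\neq y$, which is precisely what lets Condition $(b)$ for $L$ finish the estimate; in contrast to the proof of Lemma \ref{weakrelfg}, no minimality choice for $p$ is required, exactly because Condition $(b)$ is already at our disposal.
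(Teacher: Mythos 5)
Your proof is correct. Structurally it follows the paper's own argument: both lift a cycle of $\overline{\Gamma}(L,\H_{L,Y},S)$ witnessing $K=L\cap yHy^{-1}\in(\H_{L,Y})_S$ to a cycle of $\overline{\Gamma}(G,\H,X\sqcup Y\sqcup Y^{-1}\sqcup S)$ via the three-edge substitution, and both split on whether the distinguished $H$-edge is isolated, with the isolated case handled identically through Lemma \ref{cycle} (iii) and Lemma \ref{(*)}. The divergence is in the non-isolated case. The paper starts from a \emph{shortest} locally minimal circuit without backtracking containing a $K$-edge and reuses the minimality argument from the proof of Lemma \ref{weakrelfg} to show that the two connected $H$-components have distinct connecting endpoints; it then inserts a new $H$-edge to manufacture a cycle carrying an isolated $H$-component with different ends, so that $H\in\H_{X\sqcup Y\sqcup Y^{-1}\sqcup S}$ in both cases. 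You instead start from an arbitrary path avoiding $K\times(K\setminus\{1\})$ (condition (v) of Lemma \ref{cycle}) and, in the non-isolated case, observe that the companion component is forced to involve a second representative $y'\neq y$ (else the corresponding edge of $p$ would be a forbidden $K$-edge), whence $L\cap yHy'^{-1}\neq\emptyset$, $Ly\neq Ly'$, and $H\in\H_{L,y,y'}$, which is finite by Condition $(b)$ supplied by Lemma \ref{weakrelfg}. Your route trades the paper's minimality and backtracking bookkeeping for an extra, but already established, input (Condition $(b)$), at the cost of a conclusion split over two finite families; the paper's route keeps the conclusion uniform and self-contained within the $\H_{X}$ machinery. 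Either way $(\H_{L,Y})_S$ is contained in $\{L\cap yHy^{-1}\mid y\in Y,\ H\in\mathbb{F}\}$ for a finite subfamily $\mathbb{F}$ of $\H$, and finiteness follows since $Y$ is finite.
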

\begin{proof}
We take a finite relative generating system $S$ of $(L, \H_{L,Y})$. 
Take an element $L\cap y_0H_0y_0^{-1}\in (\H_{L,Y})_S$ where $y_0\in Y$ and $H_0\in\H$. 
We take a locally minimal circuit $c$ without backtracking 
in $\overline{\Gamma}(L, \H_{L,Y}, S)$ which 
have an edge labeled by some element of $(L\cap y_0H_0y_0^{-1})\setminus\{1\}$.
We suppose that the length of $c$ is shortest among such circuits.  
Then $c$ has an edge $e_0$ labeled by $y_0h_0y_0^{-1}\in (L\cap y_0H_0y_0^{-1})\setminus\{1\}$.
When we replace each edge in $c$ labeled by some element $yhy^{-1}\in (L\cap yHy^{-1})\setminus \{1\}$ 
for each $L\cap yHy^{-1}\in (\H_{L,Y})_S$ 
with a path of three edges labeled by $y\in Y$, $h\in H\setminus\{1\}$ and $y^{-1}\in Y^{-1}$, 
we have a cycle $c'$ in $\overline{\Gamma}(G, \H, X\sqcup Y\sqcup Y^{-1}\sqcup S)$.  
Suppose that $e_0$ is replaced with a path $e_1e_2e_3$ of three edges 
labeled by $y_0\in Y$, $h_0\in H_0\setminus\{1\}$ and $y_0^{-1}\in Y^{-1}$.
If $e_2$ is an isolated $H_0$-component in $c'$, then 
$H_0\in \H_{X\sqcup Y\sqcup Y^{-1}\sqcup S}$ by Lemma \ref{cycle} (iii).
If not, then we take an $H_0$-component $e$ connected to $e_2$
such that there exists no $H_0$-component which is connected to $e_2$
on the subpath $p$ of $c'$ from $e_+$ to $(e_2)_-$. 
Since all elements of $Y$ belong to 
mutually different right cosets of $L$ in $G$, 
we have $e_+\neq (e_2)_-$ by choice of $c$ 
(refer to the argument in Proof of Lemma \ref{weakrelfg}). 
When we denote by $e'$ the edge from $(e_2)_-$ to $e_+$ 
labeled by an element of $H_0\setminus \{1\}$, 
$e'p$ is a cycle, 
where $e'$ is an isolated $H_0$-component with different ends. 
Hence $H_0\in \H_{X\sqcup Y\sqcup Y^{-1}\sqcup S}$ by Lemma \ref{cycle} (iii).
Since $\H_{X\sqcup Y\sqcup Y^{-1}\sqcup S}$ is finite by Lemma \ref{(*)} and $Y$ is finite, 
$(\H_{L,Y})_S$ is also finite.
\end{proof}

We consider a kind of reduced version of $\H_{L,Y}$. 
\begin{Not} 
When $n$ is a positive integer and $Y=\{y_1,\ldots,y_n\}$ is a subset of $G$, 
we put
\begin{align*}
&\H_{L,Y}^r:=\{L\cap y_jHy_j^{-1} \in \H_{L,Y} \ |\ \text{for any }i<j, 
L\cap y_iHy_j^{-1}= \emptyset\},\\
&\mathcal H_{L,Y}^r:=\bigsqcup_{L\cap yHy^{-1}\in \H_{L,Y}^r} \{(L\cap yHy^{-1})\setminus \{1\}\}.
\end{align*}

When we have two finite subsets $Y=\{y_1,\ldots,y_{n}\}$ and 
$Y'=\{y_1,\ldots, y_n,y_{n+1}\ldots,y_{n'}\}$ of $G$, 
we have $\H_{L,Y}^r\subset \H_{L,Y'}^r$ by definition.   
\end{Not}

The following is trivial, but it is often useful. 
\begin{Lem}\label{teq'}
Take $y,y'\in G$ such that $Ly\neq Ly'$ and $H\in \H_{L,y,y'}$. 
Then for any $l\in L\cap yHy'^{-1}$, we have 
$L\cap yHy^{-1}=l(L\cap y'Hy'^{-1})l^{-1}$ and $L\cap yH=l(L\cap y'H)$.
\end{Lem}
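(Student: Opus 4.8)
The plan is to first distill the hypothesis into a single algebraic identity and then reduce both asserted equalities to elementary coset bookkeeping. Since $l\in L\cap yHy'^{-1}$, there exists $h\in H$ with $l=yhy'^{-1}$, equivalently
\[
ly'=yh.
\]
This one relation, together with the fact that $l\in L$ forces $lLl^{-1}=L$ (conjugation by an element of $L$ is an inner automorphism of $L$), is essentially all I would use. I would also invoke the trivial facts that conjugation by a fixed element of $G$ is a bijection and hence commutes with intersection, and that $hHh^{-1}=H$ for $h\in H$.

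For the first equality I would compute
\[
l(L\cap y'Hy'^{-1})l^{-1}=lLl^{-1}\cap l\,y'Hy'^{-1}\,l^{-1}=L\cap (ly')H(ly')^{-1},
\]
and then substitute $ly'=yh$ to get $(ly')H(ly')^{-1}=yhHh^{-1}y^{-1}=yHy^{-1}$, which yields $L\cap yHy^{-1}$ as desired.

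For the second equality I would verify the two inclusions directly. Given $m\in L\cap y'H$, writing $m=y'h'$ with $h'\in H$ gives $lm=yhh'\in yH$ while $lm\in L$, so $l(L\cap y'H)\subseteq L\cap yH$. Conversely, given $n\in L\cap yH$, the element $m:=l^{-1}n$ lies in $L$, and using $l^{-1}=y'h^{-1}y^{-1}$ one finds $m=y'h^{-1}(y^{-1}n)\in y'H$ since $y^{-1}n\in H$; hence $n=lm\in l(L\cap y'H)$, giving the reverse inclusion. There is no real obstacle here: the only place demanding slight care is this last step, where I must keep straight whether I am applying $ly'=yh$ or its inverse $l^{-1}y=y'h^{-1}$ so that each product lands in the intended coset. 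This matches the remark in the statement that the lemma is trivial.
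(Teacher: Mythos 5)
Your proof is correct: the identity $ly'=yh$, the facts $lLl^{-1}=L$ and $hHh^{-1}=H$, and the compatibility of conjugation with intersection are exactly what is needed, and both inclusion checks for the coset identity are sound. The paper itself offers no proof, dismissing the lemma as trivial, so your writeup simply supplies the routine verification the authors omit.
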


\begin{Lem}\label{red'}
Let $(G,\H)$ have a finite relative generating system $X$. 
Suppose that $(G,\H,X)$ satisfies Condition $(a)$.
Let $L$ be finitely generated relative to $\H$ in $G$.  
Take any finite subset $Y=\{y_1,\ldots,y_n\}$ of $G$ such that 
$L$ is finitely generated relative to $\H_{L,Y}$ and 
all elements of $Y$ belong to mutually different right cosets of $L$ in $G$.  
Then $\H_{L,Y}\setminus \H_{L,Y}^r$ consists of finitely many elements and 
$L$ is finitely generated relative to $\H_{L,Y}^r$. 
Moreover for any finite relative generating system $S$ of $(L,\H^r_{L,Y})$, 
the identity map on $L$ is quasi-isometric from 
$(L,d_{S\sqcup \mathcal H^r_{L,Y}})$ to $(L,d_{S\sqcup \mathcal H_{L,Y}})$.  
Also $(L,\H_{L,Y}^r, S)$ satisfies Condition $(a)$. 
\end{Lem}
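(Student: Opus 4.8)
The plan is to establish the four assertions in turn, reducing each to the finiteness of $\H_{L,Y}\setminus\H_{L,Y}^r$. First I would note that Condition $(a)$ together with Lemma \ref{weakrelfg} forces $L$ to satisfy Condition $(b)$ with respect to $\H$ in $G$, so that $\H_{L,y_i,y_j}$ is finite whenever $Ly_i\neq Ly_j$. A subgroup $L\cap y_jHy_j^{-1}$ fails to lie in $\H_{L,Y}^r$ exactly when $L\cap y_iHy_j^{-1}\neq\emptyset$ for some $i<j$, i.e.\ $H\in\H_{L,y_i,y_j}$; here $Ly_i\neq Ly_j$ because the elements of $Y$ represent distinct right cosets of $L$. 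As $Y$ is finite there are finitely many pairs $(i,j)$, and each $\H_{L,y_i,y_j}$ is finite, so $\H_{L,Y}\setminus\H_{L,Y}^r$ is finite, which is the first claim.

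For relative finite generation over $\H_{L,Y}^r$, the key tool is Lemma \ref{teq'}: whenever $L\cap y_iHy_j^{-1}\neq\emptyset$, the subgroups $L\cap y_iHy_i^{-1}$ and $L\cap y_jHy_j^{-1}$ are conjugate in $L$. Passing to the smallest index in the associated class (the relation ``$L\cap y_iHy_j^{-1}\neq\emptyset$'' is easily checked to be an equivalence relation on indices, and the least index of each class yields a member of $\H_{L,Y}^r$), I conclude that every $K\in\H_{L,Y}\setminus\H_{L,Y}^r$ equals $l^{-1}K'l$ for some $K'\in\H_{L,Y}^r$ and $l\in L$. Since by the first claim there are only finitely many such $K$, I would fix one conjugator $l$ for each and adjoin them together with their inverses to a finite relative generating system of $(L,\H_{L,Y})$; every letter of $\mathcal H_{L,Y}\setminus\mathcal H_{L,Y}^r$ then becomes expressible through a letter of $\mathcal H_{L,Y}^r$ and these conjugators, so the enlarged finite set is a relative generating system of $(L,\H_{L,Y}^r)$.

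For the quasi-isometry, the inclusion $\mathcal H_{L,Y}^r\subseteq\mathcal H_{L,Y}$ gives $d_{S\sqcup\mathcal H_{L,Y}}\le d_{S\sqcup\mathcal H_{L,Y}^r}$ at once. For the reverse bound I would again use $K=l^{-1}K'l$: fixing the finitely many conjugators and letting $N$ bound their $d_{S\sqcup\mathcal H_{L,Y}^r}$-lengths, each letter of $\mathcal H_{L,Y}\setminus\mathcal H_{L,Y}^r$ rewrites as $l^{-1}h'l$ with $h'\in\mathcal H_{L,Y}^r$ and hence has $d_{S\sqcup\mathcal H_{L,Y}^r}$-length at most $2N+1$. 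Replacing each edge of a $d_{S\sqcup\mathcal H_{L,Y}}$-geodesic in this way yields $d_{S\sqcup\mathcal H_{L,Y}^r}\le (2N+1)\,d_{S\sqcup\mathcal H_{L,Y}}$, so the identity map is a (bi-Lipschitz) quasi-isometry; note that $N$ depends on $S$ but is finite for every $S$.

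Finally, for Condition $(a)$ of $(L,\H_{L,Y}^r,S)$, the crucial observation is that any finite relative generating system $S$ of $(L,\H_{L,Y}^r)$ is automatically one of $(L,\H_{L,Y})$, since $S\sqcup\mathcal H_{L,Y}\supseteq S\sqcup\mathcal H_{L,Y}^r$ already generates $L$. Thus Lemma \ref{2''} yields that $(\H_{L,Y})_S$ is finite. Because $\overline\Gamma(L,\H_{L,Y}^r,S)$ is a subgraph of $\overline\Gamma(L,\H_{L,Y},S)$, any path in the smaller graph connecting two elements of some $K\in\H_{L,Y}^r$ without edges of $K\times(K\setminus\{1\})$ is also such a path in the larger graph; in the language of Lemma \ref{cycle} (v) this shows $(\H_{L,Y}^r)_S\subseteq(\H_{L,Y})_S$, so $(\H_{L,Y}^r)_S$ is finite and Condition $(a)$ holds. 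I expect this last step to be the main obstacle: rather than arguing directly inside the reduced family, one must route through the full family $\H_{L,Y}$ — invoking both the Condition $(b)$ consequence of Lemma \ref{weakrelfg} and Lemma \ref{2''} — and carefully use that the reduced Cayley graph embeds in the full one so that connectivity can only increase.
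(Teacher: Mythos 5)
Your proof is correct and follows essentially the same route as the paper's: Condition $(b)$ via Lemma \ref{weakrelfg} for the finiteness of $\H_{L,Y}\setminus\H_{L,Y}^r$, conjugating elements of $L\cap y_iHy_j^{-1}$ (Lemma \ref{teq'}) to obtain both relative finite generation over $\H_{L,Y}^r$ and the bi-Lipschitz estimate, and Lemma \ref{2''} for Condition $(a)$. The only difference is that you spell out details the paper leaves implicit, notably that $S$ is automatically a relative generating system of $(L,\H_{L,Y})$ and that $(\H_{L,Y}^r)_S\subseteq(\H_{L,Y})_S$ via the subgraph inclusion, which is indeed the intended content of the paper's terse appeal to Lemma \ref{2''}.
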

\begin{proof}
It follows from Lemma \ref{weakrelfg}
that $\H_{L,Y}\setminus \H_{L,Y}^r$ is a finite subset of $\H_{L,Y}$.
When $i,j\in\{1,\ldots,n\}$ and $H\in \H$ satisfy 
$L\cap y_iHy_j^{-1}\neq \emptyset$ and $i<j$, 
we take an element $l_{j,i,H}\in L\cap y_iHy_j^{-1}$.
We denote the subset of $L$ consisting of such elements and their inverses by $T$.
We fix a finite relative generating system $U$ of $(L,\H_{L,Y})$. 
Then $S':=T\sqcup U$ is a finite relative generating system of $(L,\H_{L,Y}^r)$ 
in view of Lemma \ref{teq'}.
Also we can straightforwardly prove that the identity map on $L$ is quasi-isometric from 
$(L,d_{U\sqcup \mathcal H_{L,Y}})$ to $(L,d_{S'\sqcup \mathcal H_{L,Y}^r})$
in view of Lemma \ref{teq'}. 
Hence Lemma \ref{choiceX} (i) implies that for any finite relative 
generating system $S$ of $(L,\H_{L,Y}^r)$, 
the identity map on $L$ is quasi-isometric from 
$(L,d_{S\sqcup \mathcal H_{L,Y}})$ to $(L_{S\sqcup \mathcal H_{L,Y}^r})$.
It follows from Lemma \ref{2''} that $(L,\H_{L,Y}^r)$ satisfies Condition $(a)$. 
\end{proof}

\begin{Lem}\label{red''}
Let $(G,\H)$ have a finite relative generating system $X$. 
Suppose that $(G,\H,X)$ satisfies Condition $(a)$.
Let $L$ be finitely generated relative to $\H$ in $G$.  
Take any finite subset $Y'=\{y_1,\ldots,y_{n'}\}$ of $G$ such that
all elements of $Y'$ belong to mutually different right cosets of $L$ in $G$. 
Take a subset $Y:=\{y_1,\ldots,y_{n}\}\subset Y'$
and suppose that 
$L$ is finitely generated relative to $\H_{L,Y}$.    
Then $\H_{L,Y'}^r\setminus \H_{L,Y}^r$ is finite.
\end{Lem}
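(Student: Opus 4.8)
The claim is that if $L$ is finitely generated relative to $\H$ in $G$, and $Y' = \{y_1,\dots,y_{n'}\}$ consists of elements lying in mutually different right cosets of $L$, and $Y = \{y_1,\dots,y_n\} \subset Y'$ is a subset with respect to which $L$ is already finitely generated relative to $\H_{L,Y}$, then the set $\H_{L,Y'}^r \setminus \H_{L,Y}^r$ is finite. Let me think carefully about what this set consists of.

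The reduced family $\H_{L,Y'}^r$ keeps, for each $H \in \H$, only those $L \cap y_j H y_j^{-1}$ for which no earlier index $i < j$ satisfies $L \cap y_i H y_j^{-1} \neq \emptyset$. By the embedded remark that $\H_{L,Y}^r \subset \H_{L,Y'}^r$, the difference $\H_{L,Y'}^r \setminus \H_{L,Y}^r$ consists of elements $L \cap y_j H y_j^{-1}$ with the reducedness property in $Y'$ but which are NOT reduced elements coming from $Y$.
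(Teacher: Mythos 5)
Your proposal stops before any argument begins: after unpacking the definition of $\H_{L,Y'}^r\setminus\H_{L,Y}^r$, you have only restated what the set is, not given any reason why it should be finite. Nothing in what you wrote uses the hypothesis that $L$ is finitely generated relative to $\H_{L,Y}$, nor Condition $(a)$, and without those the statement is simply false in spirit --- a priori there could be infinitely many indices $j>n$ and infinitely many $H\in\H$ contributing new reduced intersections.

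The missing content, which is the whole of the paper's proof, runs as follows. By Lemma \ref{red'} there is a finite relative generating system $S$ of $(L,\H_{L,Y}^r)$; since $\H_{L,Y}^r\subset\H_{L,Y'}^r$, the same $S$ is a finite relative generating system of $(L,\H_{L,Y'}^r)$, and by Lemma \ref{red'} (applied to $Y'$, using Remark \ref{YY'}) the triple $(L,\H_{L,Y'}^r,S)$ satisfies Condition $(a)$, i.e.\ $(\H_{L,Y'}^r)_S$ is finite. The key step is then the free product decomposition of Lemma \ref{cycle} (iv): writing $L=\langle S,(\H_{L,Y}^r)_S\rangle\ast(\ast_{K\in \H_{L,Y}^r\setminus(\H_{L,Y}^r)_S}K)=\langle S,(\H_{L,Y'}^r)_S\rangle\ast(\ast_{K'\in \H_{L,Y'}^r\setminus(\H_{L,Y'}^r)_S}K')$ and comparing, one obtains $\H_{L,Y'}^r\setminus(\H_{L,Y'}^r)_S=\H_{L,Y}^r\setminus(\H_{L,Y'}^r)_S$, whence every element of $\H_{L,Y'}^r$ not already in $\H_{L,Y}^r$ must lie in the finite family $(\H_{L,Y'}^r)_S$. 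That containment $\H_{L,Y'}^r\setminus\H_{L,Y}^r\subset(\H_{L,Y'}^r)_S$ is what delivers finiteness; your write-up contains no substitute for it.
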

\begin{proof}
Note that we have $\H_{L,Y}^r\subset \H_{L,Y'}^r$.
We take a finite relative generating system $S$ of $(L,\H_{L,Y}^r)$.
Then $S$ is also a finite relative generating system of $(L,\H_{L,Y'}^r)$.
By definition, we have $(\H_{L,Y}^r)_S\subset (\H_{L,Y'}^r)_S$.
By the condition (iv) of Lemma \ref{cycle}, we have 
$L=\langle S,(\H_{L,Y}^r)_S\rangle\ast(\ast_{K\in \H_{L,Y}^r\setminus(\H_{L,Y}^r)_S}K)
=\langle S,(\H_{L,Y'}^r)_S\rangle\ast(\ast_{K'\in \H_{L,Y'}^r\setminus(\H_{L,Y'}^r)_S}K')$
and thus we have 
$\H_{L,Y}^r\setminus(\H_{L,Y}^r)_S\supset\H_{L,Y'}^r\setminus(\H_{L,Y'}^r)_S$.
Since $(\H_{L,Y}^r)_S\subset (\H_{L,Y'}^r)_S$, 
we have $\H_{L,Y}^r\setminus(\H_{L,Y'}^r)_S\supset\H_{L,Y'}^r\setminus(\H_{L,Y'}^r)_S$.
In fact $\H_{L,Y}^r\setminus(\H_{L,Y'}^r)_S=\H_{L,Y'}^r\setminus(\H_{L,Y'}^r)_S$
by $\H_{L,Y}^r\subset \H_{L,Y'}^r$.
The assertion follows from the fact that 
$(L,\H_{L,Y'}^r)$ satisfies Condition $(a)$ by Lemma \ref{red'}. 
\end{proof}

\subsection{Relatively undistorted subgroups}
In this section we study relatively undistorted subgroups. 
We give several lemmas which are used in Section 4.3.

We define relatively undistorted subgroups.
\begin{Def}\label{rund}
Suppose that $(G,\H)$ has a finite relative generating system $X$.  
Then $L$ is {\it undistorted relative to} $\H$ in $G$ if 
$L$ is finitely generated relative to $\H$ in $G$ and 
for some finite subset $Y\subset G$ and some finite generating system $S$ of $(L,\H_{L,Y})$, 
the natural embedding from 
$(L,d_{S\sqcup {\mathcal H}_{L,Y}})$ to $(G,d_{X\sqcup {\mathcal H}})$ is quasi-isometric.

Also $L$ is {\it strongly undistorted relative to} $\H$ in $G$ if 
$L$ is finitely generated and 
for some finite generating system $S$ of $L$, 
the natural embedding from 
$(L,d_S)$ to $(G,d_{X\sqcup {\mathcal H}})$ is quasi-isometric.
\end{Def}

\begin{Rem}
The above is independent of choice of $X$ and $S$ by Lemma \ref{choiceX}. 
Also Lemmas \ref{Y'}, \ref{choice} and \ref{choiceY} claim that 
the above is in some sense independent of choice of $Y$.

If $L$ is strongly undistorted relative to $\H$ in $G$, 
then  $L$ is undistorted relative to $\H$ in $G$ by taking $Y$ as $\emptyset$. 
\end{Rem}

\begin{Ex}\label{Q}
Suppose that 
$G$ is finitely presented relative to $\H$.
We take a finite relative presentation $(X,{\mathcal R})$ 
and denote $Q$ the subgroup generated by $X$ and $\Omega$.
Since $\Omega$ is finite, $Q$ is finitely generated. 
Then $Q$ is undistorted relative to $\H$ in $G$ by \cite[Proposition 2.49]{Osi06a}.
\end{Ex}

\begin{Lem}\label{conjundist}
Suppose that 
$(G,\H)$ has a finite relative generating system $X$.
The subgroup $L$ is (strongly) undistorted relative to $\H$ in $G$ if and only if 
$gLg^{-1}$ is (strongly) undistorted relative to $\H$ in $G$ for every $g\in G$. 
\end{Lem}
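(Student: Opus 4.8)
The plan is to prove both directions of the equivalence by exhibiting, for each $g \in G$, an explicit correspondence between relative generating data for $L$ and for $gLg^{-1}$, much as in the proof of Lemma~\ref{conjfg}. Since the statement is an ``if and only if'' that is symmetric under replacing $g$ by $g^{-1}$ and $L$ by $gLg^{-1}$, it suffices to prove only one implication: assuming $L$ is (strongly) undistorted relative to $\H$ in $G$, I would show $gLg^{-1}$ is too. The two cases (undistorted and strongly undistorted) are handled separately but in parallel, and the key technical tool in both is that conjugation by $g$ is an isometry of $(G, d_{X \sqcup \mathcal H})$ up to a bounded additive or multiplicative error controlled by $d_{X \sqcup \mathcal H}(1, g)$.

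For the strongly undistorted case, the argument is short. If $L$ is finitely generated with finite generating system $S$ such that the embedding $(L, d_S) \to (G, d_{X \sqcup \mathcal H})$ is quasi-isometric, then $gLg^{-1}$ is finitely generated by $gSg^{-1}$, and I would verify that the embedding $(gLg^{-1}, d_{gSg^{-1}}) \to (G, d_{X \sqcup \mathcal H})$ is quasi-isometric. Here I use that left multiplication by $g$ is an isometry of $(G, d_{X \sqcup \mathcal H})$ (since $d_{X \sqcup \mathcal H}$ is a left-invariant word metric) and that right multiplication by $g^{-1}$ changes distances by at most the bounded quantity $2\,d_{X \sqcup \mathcal H}(1, g)$. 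Composing these observations with the hypothesis gives the desired quasi-isometry constants for $gLg^{-1}$.

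For the undistorted case, I would reuse the structure already present in the proof of Lemma~\ref{conjfg}: take a finite subset $Y \subset G$ and a finite relative generating system $S$ of $(L, \H_{L,Y})$ for which the embedding $(L, d_{S \sqcup \mathcal H_{L,Y}}) \to (G, d_{X \sqcup \mathcal H})$ is quasi-isometric. Then, as recorded in that earlier proof, $\H_{gLg^{-1}, gY} = \{ gKg^{-1} \mid K \in \H_{L,Y}\}$ and $gSg^{-1}$ is a finite relative generating system of $(gLg^{-1}, \H_{gLg^{-1}, gY})$. The main point to check is that the identification $L \ni l \mapsto glg^{-1} \in gLg^{-1}$ is an isometry from $(L, d_{S \sqcup \mathcal H_{L,Y}})$ to $(gLg^{-1}, d_{gSg^{-1} \sqcup \mathcal H_{gLg^{-1}, gY}})$; this is immediate because conjugation by $g$ carries the generators $S \sqcup \mathcal H_{L,Y}$ bijectively onto $gSg^{-1} \sqcup \mathcal H_{gLg^{-1}, gY}$, preserving labels and hence word length. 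Combining this isometry with the already-established fact that conjugation by $g$ distorts $d_{X \sqcup \mathcal H}$ by a bounded amount, the quasi-isometric embedding for $L$ transports to one for $gLg^{-1}$.

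The step I expect to require the most care is the control of $d_{X \sqcup \mathcal H}$ under conjugation, i.e.\ verifying that $l \mapsto glg^{-1}$ composed with the inclusion into $G$ is quasi-isometric with constants depending only on $d_{X \sqcup \mathcal H}(1, g)$. This is where one must be slightly careful to separate the left-invariant (isometric) part of conjugation from the right-translation part, which is only a quasi-isometry, and to combine the two error estimates correctly. Everything else is a routine transport of structure along the conjugation bijection, and the finite-relative-generation half of each statement is already supplied by Lemma~\ref{conjfg}.
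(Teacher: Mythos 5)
Your proposal is correct and follows essentially the same route as the paper's proof: it reuses the conjugation correspondence $\H_{gLg^{-1},gY}=\{gKg^{-1}\mid K\in\H_{L,Y}\}$ and $gSg^{-1}$ from Lemma~\ref{conjfg}, observes that conjugation is an isometry for the conjugated intrinsic generating data, and bounds the distortion of $d_{X\sqcup\mathcal H}$ under $l\mapsto glg^{-1}$ by an additive constant (the paper absorbs this constant by enlarging the generating system to $X\sqcup\{g\}\sqcup\{g^{-1}\}$ and invoking Lemma~\ref{choiceX}, which is only a cosmetic difference from your direct bound $2\,d_{X\sqcup\mathcal H}(1,g)$).
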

\begin{proof}
We consider the setting in Proof of Lemma \ref{conjfg}.

Take $g\in G$. 
Then for any $l\in L$, we have 
\begin{align*}
&d_{\mathcal H\sqcup X\sqcup\{g\}\sqcup\{g^{-1}\}}(1,glg^{-1})\\
\le&d_{\mathcal H\sqcup X\sqcup\{g\}\sqcup\{g^{-1}\}}(1,g)+
d_{\mathcal H\sqcup X\sqcup\{g\}\sqcup\{g^{-1}\}}(g,gl)+
d_{\mathcal H\sqcup X\sqcup\{g\}\sqcup\{g^{-1}\}}(gl,glg^{-1})\\
\le&d_{\mathcal H\sqcup X\sqcup\{g\}\sqcup\{g^{-1}\}}(1,g)+
d_{\mathcal H\sqcup X\sqcup\{g\}\sqcup\{g^{-1}\}}(1,l)+
d_{\mathcal H\sqcup X\sqcup\{g\}\sqcup\{g^{-1}\}}(1,g^{-1})\\
\le&d_{\mathcal H\sqcup X\sqcup\{g\}\sqcup\{g^{-1}\}}(1,l)+2
\end{align*}
and we have by a similar way
\[
d_{\mathcal H\sqcup X\sqcup\{g\}\sqcup\{g^{-1}\}}(1,l)-2\le d_{\mathcal H\sqcup X\sqcup\{g\}\sqcup\{g^{-1}\}}(1,glg^{-1}).
\]
Also we have 
\[
d_{\mathcal H_{gLg^{-1},gY}\sqcup gSg^{-1}}(1,glg^{-1})
= d_{\mathcal H_{L,Y}\sqcup S}(1,l). 
\]
Since the identity from $(G, d_{\mathcal H\sqcup X})$
to $(G, d_{\mathcal H\sqcup X\sqcup\{g\}\sqcup\{g^{-1}\}})$ is quasi-isometric
by Lemmas \ref{choiceX} (i) and \ref{dict} (i), 
the assertion follows. 
\end{proof}

\begin{Lem}\label{Y'}
Suppose that 
$(G,\H)$ has a finite relative generating system $X$
and that $L$ is undistorted relative to $\H$ in $G$. 
We take a finite subset $Y'\subset G$ 
and a finite relative generating system $S'$ of $(L,\H_{L,Y'})$
such that the natural embedding 
$(L,d_{S'\sqcup {\mathcal H}_{L,Y'}})\to (G,d_{X\sqcup {\mathcal H}})$
is quasi-isometric. 
Then we have a subset $Y$ of $Y'$ and 
a finite relative generating system $S$ of $(L,\H_{L,Y})$
such that all elements of $Y$ belong to mutually different right cosets of $L$ in $G$ 
and the natural embedding 
$(L,d_{S\sqcup {\mathcal H}_{L,Y}})\to (G,d_{X\sqcup {\mathcal H}})$
is quasi-isometric.
\end{Lem}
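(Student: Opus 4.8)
The plan is to obtain $Y$ from $Y'$ by the same coset-reduction used in the proof of Lemma \ref{Y} --- keeping one representative per right coset of $L$ that $Y'$ meets --- and then to check that this reduction alters the word metric on $L$ only up to a bi-Lipschitz amount, so that the quasi-isometric embedding of Definition \ref{rund} is preserved under composition.

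Concretely, first I would let $Y\subseteq Y'$ be a set of representatives of the right cosets $\{Ly' \mid y'\in Y'\}$, so that the elements of $Y$ lie in mutually distinct right cosets of $L$ and every $y'\in Y'$ is uniquely written as $y'=l_{y'}y$ with $y\in Y$ and $l_{y'}=y'y^{-1}\in L$. Put $T:=\{l_{y'}^{\pm 1}\mid y'\in Y'\}$, a finite subset of $L$, and set $S:=S'\sqcup T$. Since $Y\subseteq Y'$ gives $\H_{L,Y}\subseteq \H_{L,Y'}$, and hence $\mathcal H_{L,Y}\subseteq \mathcal H_{L,Y'}$, it remains to compare the two generating sets $A:=S\sqcup \mathcal H_{L,Y}$ and $B:=S'\sqcup \mathcal H_{L,Y'}$ of $L$.

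The key computation is the conjugation identity: if $y'=l_{y'}y$ then $y'Hy'^{-1}=l_{y'}(yHy^{-1})l_{y'}^{-1}$, whence $L\cap y'Hy'^{-1}=l_{y'}(L\cap yHy^{-1})l_{y'}^{-1}$. In particular $L\cap yHy^{-1}\neq\{1\}$ whenever $L\cap y'Hy'^{-1}\neq\{1\}$, so $L\cap yHy^{-1}\in\H_{L,Y}$, and every generator $h'\in (L\cap y'Hy'^{-1})\setminus\{1\}$ has the form $h'=l_{y'}\,m\,l_{y'}^{-1}$ with $m\in (L\cap yHy^{-1})\setminus\{1\}\subset \mathcal H_{L,Y}$; thus $h'$ is a word of length at most $3$ in $A$. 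Together with $S'\subset S$, this shows on the one hand that $A$ generates $L$, so that $S$ is a finite relative generating system of $(L,\H_{L,Y})$, and on the other hand that every element of $B$ has $d_{S\sqcup \mathcal H_{L,Y}}$-length at most $3$. Conversely every element of $A$ has uniformly bounded $d_{S'\sqcup \mathcal H_{L,Y'}}$-length: the elements of $\mathcal H_{L,Y}\subseteq \mathcal H_{L,Y'}$ and of $S'$ have length $1$, while the finitely many elements of $T$ have bounded length. Hence $d_{S\sqcup \mathcal H_{L,Y}}$ and $d_{S'\sqcup \mathcal H_{L,Y'}}$ are bi-Lipschitz equivalent, i.e. the identity map $(L,d_{S\sqcup \mathcal H_{L,Y}})\to (L,d_{S'\sqcup \mathcal H_{L,Y'}})$ is a quasi-isometry.

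Finally I would compose this quasi-isometry with the given quasi-isometric embedding $(L,d_{S'\sqcup \mathcal H_{L,Y'}})\to (G,d_{X\sqcup \mathcal H})$ to conclude that $(L,d_{S\sqcup \mathcal H_{L,Y}})\to (G,d_{X\sqcup \mathcal H})$ is quasi-isometric, which is exactly what is required. The only genuinely delicate point is the conjugation identity and its bookkeeping, namely verifying that each parabolic generator lost in passing from $\H_{L,Y'}$ to $\H_{L,Y}$ is recovered as a length-$3$ word through the finitely many newly added free generators $T$ (and that nontriviality of the intersections survives conjugation); everything else is the routine two-sided Lipschitz comparison of word metrics.
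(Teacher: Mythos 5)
Your proof is correct and follows essentially the same approach as the paper: the paper performs the identical reduction one redundant element of $Y'$ at a time, replacing each edge labeled by $y'hy'^{-1}$ with the three-edge path labeled $l$, $yhy^{-1}$, $l^{-1}$ after adjoining $l^{\pm 1}$ to the generating set, whereas you carry out all the replacements in a single step via the set $T$. The underlying conjugation identity and the bi-Lipschitz comparison of the two word metrics are the same in both arguments.
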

\begin{proof}
Take $y'\in Y'$. 
If there exist $l\in L$ and $y\in Y'\setminus \{y'\}$ such that $y'=ly$, 
then we consider $Y'':=Y'\setminus \{y'\}$ and $S'':=S'\sqcup\{l\}\sqcup \{l^{-1}\}$. 
We replace each edge in $\overline{\Gamma}(L, \H_{L,Y'}, S')$ labeled by
$y'hy'^{-1}\in L\cap y'Hy'^{-1}$ for some $L\cap y'Hy'^{-1}\in \H_{L,Y'}$ 
with a path in 
$\overline{\Gamma}(L, \H_{L,Y''}, S'')$ 
of three edges labeled by $l$, $yhy^{-1}$ and $l^{-1}$.
Then the natural embedding 
$(L,d_{S''\sqcup {\mathcal H}_{L,Y''}})\to (G,d_{X\sqcup {\mathcal H}})$
is quasi-isometric.
By continuing the procedure, we have $Y$ and $S$ satisfying the assertion. 
\end{proof}

\begin{Lem}\label{choice}
Let $(G,\H)$ have a finite relative generating system $X$. 
Let $L$ be undistorted relative to $\H$ in $G$. 
If there exist a finite subset $Y\subset G$ and 
a finite generating system $S$ of $L$ relative to $\H_{L,Y}$ such that 
the natural embedding 
$(L,d_{S\sqcup {\mathcal H}_{L,Y}})\to (G,d_{X\sqcup {\mathcal H}})$
is quasi-isometric, then for any finite subset $Y'\subset G$ such that $Y\subset Y'$, 
the natural embedding 
$(L,d_{S\sqcup {\mathcal H}_{L,Y'}})\to (G,d_{X\sqcup {\mathcal H}})$
is quasi-isometric.
\end{Lem}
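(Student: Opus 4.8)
The plan is to verify the two defining inequalities of a quasi-isometric embedding directly, the whole point being that enlarging $Y$ to $Y'$ only enlarges the relative generating set of $L$ and hence can only shrink the source metric. First I would record the elementary inclusions: since $Y\subset Y'$ we get $\H_{L,Y}\subset \H_{L,Y'}$ and so $\mathcal H_{L,Y}\subset \mathcal H_{L,Y'}$; in particular $S\sqcup\mathcal H_{L,Y'}\supset S\sqcup\mathcal H_{L,Y}$ still generates $L$, so $S$ is a finite relative generating system of $(L,\H_{L,Y'})$ and the statement is meaningful. Because every edge of $\overline\Gamma(L,\H_{L,Y},S)$ is an edge of $\overline\Gamma(L,\H_{L,Y'},S)$, word length cannot increase under this enlargement, so
\[
d_{S\sqcup \mathcal H_{L,Y'}}(l_1,l_2)\le d_{S\sqcup \mathcal H_{L,Y}}(l_1,l_2)
\]
for all $l_1,l_2\in L$. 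Let $\lambda\ge 1$ and $c\ge 0$ be the constants provided by the hypothesis, so that $\frac1\lambda d_{S\sqcup\mathcal H_{L,Y}}(l_1,l_2)-c\le d_{X\sqcup\mathcal H}(l_1,l_2)\le \lambda d_{S\sqcup\mathcal H_{L,Y}}(l_1,l_2)+c$ holds on $L$.

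The lower estimate for $Y'$ then comes for free: combining the displayed monotonicity with the left-hand inequality of the hypothesis gives
\[
\frac1\lambda d_{S\sqcup\mathcal H_{L,Y'}}(l_1,l_2)-c\le \frac1\lambda d_{S\sqcup\mathcal H_{L,Y}}(l_1,l_2)-c\le d_{X\sqcup\mathcal H}(l_1,l_2),
\]
so there is nothing to prove on this side.

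For the upper estimate I would bound the $d_{X\sqcup\mathcal H}$-length of each individual generator of $S\sqcup\mathcal H_{L,Y'}$. A letter of $S$ has length at most $D_S:=\max_{s\in S}d_{X\sqcup\mathcal H}(1,s)<\infty$, as $S$ is finite. A letter $\sigma\in\mathcal H_{L,Y'}$ lies in $(L\cap y'Hy'^{-1})\setminus\{1\}$ for some $y'\in Y'$ and $H\in\H$, so $\sigma=y'hy'^{-1}$ with $h\in H\setminus\{1\}\subset\mathcal H$; using left-invariance and the symmetry of $X\sqcup\mathcal H$ I get $d_{X\sqcup\mathcal H}(1,\sigma)\le 2\,d_{X\sqcup\mathcal H}(1,y')+1\le 2D+1$, where $D:=\max_{y'\in Y'}d_{X\sqcup\mathcal H}(1,y')<\infty$ since $Y'$ is finite. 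Reading off a geodesic word for $l_1^{-1}l_2$ over $S\sqcup\mathcal H_{L,Y'}$ and applying the triangle inequality one letter at a time, with $M:=\max(D_S,2D+1)$ this yields $d_{X\sqcup\mathcal H}(l_1,l_2)\le M\,d_{S\sqcup\mathcal H_{L,Y'}}(l_1,l_2)$. Setting $\lambda'=\max(\lambda,M)$ and $c'=c$ then gives the desired two-sided estimate, proving the embedding for $Y'$ is quasi-isometric. The only genuine computation is the per-generator length bound feeding the upper estimate; the lower estimate is automatic from monotonicity, so I do not expect any serious obstacle here.
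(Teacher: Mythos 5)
Your proof is correct and follows essentially the same route as the paper: the lower estimate comes from the monotonicity $d_{S\sqcup\mathcal H_{L,Y'}}\le d_{S\sqcup\mathcal H_{L,Y}}$, and the upper estimate from a uniform bound on the $d_{X\sqcup\mathcal H}$-length of each generator in $S\sqcup\mathcal H_{L,Y'}$. The only cosmetic difference is that the paper obtains the per-generator bound by replacing each letter $y'hy'^{-1}$ with a three-edge path in $\overline\Gamma(G,\H,X\sqcup Y'\sqcup Y'^{-1}\sqcup S)$ and then invoking Lemma \ref{choiceX} to return to $d_{X\sqcup\mathcal H}$, whereas you compute the bound $2D+1$ directly in $d_{X\sqcup\mathcal H}$.
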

\begin{proof}
By Lemma \ref{choiceX}, the identity map from 
$(G,d_{X\sqcup {\mathcal H}})$ to $(G,d_{X\sqcup Y'\sqcup Y'^{-1}\sqcup S\sqcup \mathcal H})$
is quasi-isometric. 
We claim that the embedding from $(L, d_{S\sqcup {\mathcal H}_{L,Y'}})$
to $(G, d_{X\sqcup Y'\sqcup Y'^{-1}\sqcup S\sqcup {\mathcal H}})$ 
is large-scale Lipschitz. 
Indeed for any path $p$ of $\overline{\Gamma}(L, \H_{L,Y'},S)$, 
when we replace any edge labeled by some element 
$y'hy'^{-1}\in (L\cap y'Hy'^{-1})\setminus \{1\}$
with a path of three edges labeled by $y'$, $h$ and $y'^{-1}$, 
we have a path $p'$ of $(G, d_{X\sqcup Y'\sqcup Y'^{-1}\sqcup S\sqcup {\mathcal H}})$. 
Then length of $p'$ is at most three times of length of $p$.  
Also the identity map from 
$(L, d_{S\sqcup {\mathcal H}_{L,Y}})$ to $(L, d_{S\sqcup {\mathcal H}_{L,Y'}})$ 
is large-scale Lipschitz by $\H_{L,Y}\subset \H_{L,Y'}$. 
Since $(L,d_{S\sqcup {\mathcal H}_{L,Y}})\to (G,d_{X\sqcup {\mathcal H}})$ 
is quasi-isometric, 
$(L,d_{S\sqcup {\mathcal H}_{L,Y'}})\to (G,d_{X\sqcup {\mathcal H}})$
is quasi-isometric. 
\end{proof}

\begin{Lem}\label{choiceY}
Let $(G,\H)$ have a finite relative generating system $X$. 
Let $L$ be undistorted relative to $\H$ in $G$. 
Take a finite subset $Y\subset G$ and 
a finite relative generating system $S$ of $(L,\H_{L,Y})$ such that 
the natural embedding 
$(L,d_{S\sqcup {\mathcal H}_{L,Y}})\to (G,d_{X\sqcup {\mathcal H}})$
is quasi-isometric. 
When we take $l_y\in L$ for each $y\in Y$, we put $Y':=\{l_yy\}_{y\in Y}$.  
Then the natural embedding 
$(L,d_{S\sqcup {\mathcal H}_{L,Y'}})\to (G,d_{X\sqcup {\mathcal H}})$
is quasi-isometric.
\end{Lem}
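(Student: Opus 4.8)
The plan is to reduce the statement to a comparison of the two relative word metrics $d_{S\sqcup\mathcal H_{L,Y}}$ and $d_{S\sqcup\mathcal H_{L,Y'}}$ on $L$, exploiting the fact that $\H_{L,Y'}$ is obtained from $\H_{L,Y}$ by conjugating inside $L$. Writing $y'=l_yy$ and using $l_y\in L$, we have $L\cap y'Hy'^{-1}=L\cap l_y(yHy^{-1})l_y^{-1}=l_y(L\cap yHy^{-1})l_y^{-1}$, so each member of $\H_{L,Y'}$ is the $l_y$-conjugate of the corresponding member of $\H_{L,Y}$ (and one is trivial precisely when the other is, so the index sets correspond); compare Lemma \ref{teq'}. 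First I would enlarge $S$ to the finite symmetric set $S_0:=S\sqcup\{l_y^{\pm1}\mid y\in Y\}$ and carry out the comparison with respect to $S_0$, where the conjugators are available as single letters.

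Next I would check that $S_0$ is a relative generating system of both $(L,\H_{L,Y})$ and $(L,\H_{L,Y'})$. For $\H_{L,Y}$ this is clear since $S_0\supset S$. For $\H_{L,Y'}$ it suffices to note that every generator $k\in(L\cap yHy^{-1})\setminus\{1\}$ of $\mathcal H_{L,Y}$ can be written as $k=l_y^{-1}(l_ykl_y^{-1})l_y$ with $l_ykl_y^{-1}\in\mathcal H_{L,Y'}$ and $l_y^{\pm1}\in S_0$, whence $\langle S_0\sqcup\mathcal H_{L,Y'}\rangle\supset\langle S_0\sqcup\mathcal H_{L,Y}\rangle=L$. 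With this in hand, the identity map $(L,d_{S_0\sqcup\mathcal H_{L,Y}})\to(L,d_{S_0\sqcup\mathcal H_{L,Y'}})$ is bi-Lipschitz: a path of $\overline\Gamma(L,\H_{L,Y},S_0)$ becomes a path of $\overline\Gamma(L,\H_{L,Y'},S_0)$ with the same endpoints upon replacing each edge labeled $k\in\mathcal H_{L,Y}$ by the three edges labeled $l_y$, $l_ykl_y^{-1}$, $l_y^{-1}$ (and symmetrically in the other direction), so lengths change by at most a factor of three. Combining the hypothesis that $(L,d_{S\sqcup\mathcal H_{L,Y}})\to(G,d_{X\sqcup\mathcal H})$ is quasi-isometric with Lemmas \ref{dict} (i) and \ref{choiceX} (i) (to pass between $S$ and $S_0$ over the fixed family $\H_{L,Y}$), I would deduce that $(L,d_{S_0\sqcup\mathcal H_{L,Y}})\to(G,d_{X\sqcup\mathcal H})$ is quasi-isometric, and hence, via the bi-Lipschitz identity above, so is $(L,d_{S_0\sqcup\mathcal H_{L,Y'}})\to(G,d_{X\sqcup\mathcal H})$.

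The main obstacle is the final passage from the enlarged system $S_0$ back to the prescribed $S$, i.e.\ showing $(L,d_{S\sqcup\mathcal H_{L,Y'}})\to(G,d_{X\sqcup\mathcal H})$ is quasi-isometric. The upper bound $d_{X\sqcup\mathcal H}(1,l)\le A\,d_{S\sqcup\mathcal H_{L,Y'}}(1,l)+B$ is immediate by pushing an $\overline\Gamma(L,\H_{L,Y'},S)$-geodesic into $\overline\Gamma(G,\H,X)$ and expanding each $\mathcal H_{L,Y'}$-edge over $y'=l_yy$ (a bounded $d_{X\sqcup\mathcal H}$-detour, since $Y'$ is finite). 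The delicate point is the reverse distortion bound, which by Lemma \ref{choiceX} (i) is exactly the assertion that $S$ itself (not merely $S_0$) is a relative generating system of $(L,\H_{L,Y'})$ inducing a comparable metric. I would attack this by taking an $S\sqcup\mathcal H_{L,Y}$-word for a given $l\in L$ of $d_{S\sqcup\mathcal H_{L,Y}}$-length controlled through the $Y$-hypothesis, and converting each $\mathcal H_{L,Y}$-letter $k$ into $l_y^{-1}(l_ykl_y^{-1})l_y$; I expect the absorption of the bounded conjugators $l_y$ back into $S\sqcup\mathcal H_{L,Y'}$ to be the crux, after which Lemma \ref{choiceX} (i) together with the $Y$-quasi-isometry and the bi-Lipschitz comparison of the second paragraph yields the claim.
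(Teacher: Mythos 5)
Your first two paragraphs are precisely the paper's proof: the paper's entire argument is the one\--line observation that $\overline{\Gamma}(L,\H_{L,Y'},S_0)$ and $\overline{\Gamma}(L,\H_{L,Y},S_0)$ are quasi-isometric for $S_0=S\sqcup\{l_y^{\pm1}\mid y\in Y\}$, which is exactly the conjugation/three-edge-substitution argument you spell out, combined implicitly with Lemma \ref{choiceX} (i) applied to the two finite relative generating systems $S$ and $S_0$ of $(L,\H_{L,Y})$. The ``delicate point'' you isolate in your final paragraph --- returning from $S_0$ to the prescribed $S$ --- is real, is not addressed in the paper either, and in fact cannot be resolved in general, because $S$ need not be a relative generating system of $(L,\H_{L,Y'})$ at all. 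For instance, take $G=L=\langle a\rangle\ast H$ with $H=\langle h\rangle$ infinite cyclic, $\H=\{H\}$, $X=S=\{a^{\pm1}\}$, $Y=\{1\}$ and $l_1=ha$; then $\mathcal H_{L,Y'}$ generates $\langle haha^{-1}h^{-1}\rangle$, and a Stallings-folding computation shows $h\notin\langle a, haha^{-1}h^{-1}\rangle$, so $d_{S\sqcup\mathcal H_{L,Y'}}(1,h)=\infty$ while $d_{X\sqcup\mathcal H}(1,h)=1$. So you should stop after your second paragraph: the correct reading of the conclusion --- and the only one used later, since the lemma serves to show that the witness $Y$ in Definition \ref{rund} can be modified, with $S$ there being existentially quantified --- is that the embedding is quasi-isometric for the finite relative generating system $S_0$ of $(L,\H_{L,Y'})$. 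Your proposed absorption of the conjugators $l_y$ into $S\sqcup\mathcal H_{L,Y'}$ would fail for the reason above, and attempting it is not needed.
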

\begin{proof}
We can easily confirm that 
$\overline{\Gamma}(L,\H_{L,Y'},S\sqcup\{l_y\}_{y\in Y}\sqcup\{l_y^{-1}\}_{y\in Y})$ and 
$\overline{\Gamma}(L,\H_{L,Y},S\sqcup\{l_y\}_{y\in Y}\sqcup\{l_y^{-1}\}_{y\in Y})$ are 
quasi-isometric.
\end{proof}

The following gives a characterization of relatively undistorted subgroups.   
\begin{Prop}\label{rundqc}
Let $(G,\H)$ have a finite relative generating system $X$. 
Suppose that $(G,\H,X)$ satisfies Condition $(a)$.
The subgroup $L$ is undistorted relative to $\H$ in $G$
if and only if $L$ satisfies Condition $(b)$
and there exist constants 
$\mu\ge 1$, $C\ge 0$ 
and a finite subset $Y\subset G$ satisfying the following:
for any $l\in L\setminus \{1\}$, there exists 
a locally minimal $(\mu, C)$-quasigeodesic $p$ 
in $\overline{\Gamma}(G,\H,X)$ without backtracking from $1$ to $l$ 
such that all vertices on $p$ are contained in $L\cup LY$.  
\end{Prop}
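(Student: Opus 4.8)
The plan is to prove both implications directly from Definition \ref{rund}, exploiting the edge-by-edge structure of paths in $\overline{\Gamma}(G,\H,X)$ whose vertices are confined to $L\cup LY$; note that relative hyperbolicity of $(G,\H)$ is never used, only that $(G,\H,X)$ satisfies Condition $(a)$.

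\emph{Necessity.} Suppose $L$ is undistorted relative to $\H$ in $G$. Then $L$ is finitely generated relative to $\H$ in $G$, so Condition $(b)$ is immediate from Lemma \ref{weakrelfg}. For the quasigeodesics I first invoke Lemma \ref{Y'} to obtain a finite $Y_0\subset G$ whose elements lie in distinct right cosets of $L$ (we may also assume $1\in Y_0$) together with a finite relative generating system $S$ of $(L,\H_{L,Y_0})$ for which $(L,d_{S\sqcup\mathcal H_{L,Y_0}})\to(G,d_{X\sqcup\mathcal H})$ is a $(\mu_0,C_0)$-quasi-isometric embedding. Given $l\in L\setminus\{1\}$, take a geodesic $\bar p$ from $1$ to $l$ in $\overline{\Gamma}(L,\H_{L,Y_0},S)$; such a geodesic is automatically locally minimal and without backtracking. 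I transport $\bar p$ into $\overline{\Gamma}(G,\H,X)$ one edge at a time: an $S$-edge $l'\to l's$ is replaced by the left translate $l'\gamma_s$ of a fixed geodesic $\gamma_s$ from $1$ to $s$, and an $\mathcal H_{L,Y_0}$-edge $l'\to l''$ with $l''=l'(yhy^{-1})$ and $yhy^{-1}\in L\cap yHy^{-1}$, $y\in Y_0$, is replaced by the concatenation of $l'\gamma_y$, the single $H$-edge $l'y\to l'yh=l''y$, and $(l''y)\gamma_{y^{-1}}$. Every interior vertex of these blocks lies in $L\cdot V$, where $V$ is the finite set obtained from $Y_0$ and from the interior vertices of the finitely many fixed geodesics $\gamma_s,\gamma_y,\gamma_{y^{-1}}$ (translated by the corresponding $y$). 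Setting $Y:=V$, the resulting path $p$ from $1$ to $l$ has all of its vertices in $L\cup LY$.

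Each block has length bounded by a constant depending only on $S$ and $Y_0$, and every subpath of $\bar p$ is again a geodesic; combining this with the quasi-isometric embedding gives $\mathrm{length}(q)\le\mu\, d_{X\sqcup\mathcal H}(q_-,q_+)+C$ for every subpath $q$ of $p$, with $\mu,C$ independent of $l$, so $p$ is a $(\mu,C)$-quasigeodesic. Finally I apply the standard reductions to make $p$ locally minimal and without backtracking: replacing each $\H$-component by a single edge and deleting each backtracking segment only removes vertices and shortens $p$, so the reduced path still joins $1$ to $l$, still has all vertices in $L\cup LY$, and remains a quasigeodesic with controlled constants. This is the fiddly but routine part of necessity.

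\emph{Sufficiency.} Conversely, suppose $L$ satisfies Condition $(b)$ and the stated quasigeodesic condition holds with data $\mu,C,Y$. Since $Ly'=Ly$ leaves $LY$ unchanged, we may thin $Y$ so that its elements lie in distinct right cosets of $L$, and we may assume $1\in Y$; then each vertex $w\in L\cup LY$ has a unique expression $w=\lambda y$ with $\lambda\in L$ and $y\in Y$ (with $y=1$ exactly when $w\in L$). Fix $l\in L\setminus\{1\}$ and a quasigeodesic $p=w_0w_1\cdots w_N$ from $1$ to $l$ with all $w_j\in L\cup LY$, and write $w_j=\lambda_j y_{(j)}$. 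For each edge $w_{j-1}\to w_j$, labelled by some $z_j\in X\sqcup\mathcal H$, one has $\lambda_{j-1}^{-1}\lambda_j=y_{(j-1)}z_j y_{(j)}^{-1}\in L$. If $z_j\in X$, this is one of finitely many elements of $L$ (finitely many triples in $Y\times X\times Y$). If $z_j=h\in H\setminus\{1\}$ and $Ly_{(j-1)}=Ly_{(j)}$, then $y_{(j-1)}=y_{(j)}=:y$ and $\lambda_{j-1}^{-1}\lambda_j\in L\cap yHy^{-1}$, a single generator in $\mathcal H_{L,Y}$. If $z_j=h\in H\setminus\{1\}$ but $Ly_{(j-1)}\ne Ly_{(j)}$, then $H\in\H_{L,y_{(j-1)},y_{(j)}}$, which is finite by Condition $(b)$; by Lemma \ref{teq'} we may write $\lambda_{j-1}^{-1}\lambda_j=l_\ast\cdot g$, where $l_\ast$ ranges over a fixed finite set (one per admissible triple $(y_{(j-1)},y_{(j)},H)$) and $g\in(L\cap y_{(j)}Hy_{(j)}^{-1})\cup\{1\}\subset\mathcal H_{L,Y}\cup\{1\}$.

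Collecting the finitely many exceptional elements into a finite set $S_1\subset L$, every edge of $p$ contributes a factor of $S_1\sqcup\mathcal H_{L,Y}$-length at most $2$, so the telescoping product $l=\prod_{j}\lambda_{j-1}^{-1}\lambda_j$ yields $d_{S_1\sqcup\mathcal H_{L,Y}}(1,l)\le 2N=2\,\mathrm{length}(p)\le 2\mu\, d_{X\sqcup\mathcal H}(1,l)+2C$. In particular $S_1$ generates $L$ relative to $\H_{L,Y}$, so $L$ is finitely generated relative to $\H$ in $G$. The reverse Lipschitz estimate $d_{X\sqcup\mathcal H}(1,l)\le A\, d_{S_1\sqcup\mathcal H_{L,Y}}(1,l)$ is routine, since each element of the finite set $S_1$ and each generator $yhy^{-1}$ of $\mathcal H_{L,Y}$ has bounded $d_{X\sqcup\mathcal H}$-length. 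Together these show that $(L,d_{S_1\sqcup\mathcal H_{L,Y}})\to(G,d_{X\sqcup\mathcal H})$ is a quasi-isometric embedding, i.e. $L$ is undistorted relative to $\H$ in $G$. The main obstacle is exactly the third case of the edge analysis: a priori infinitely many subgroups $H$ could supply $\mathcal H$-labelled edges joining two distinct cosets $Ly_{(j-1)}$ and $Ly_{(j)}$, and it is Condition $(b)$, through the finiteness of $\H_{L,y_{(j-1)},y_{(j)}}$, that forbids this and keeps $S_1$ finite.
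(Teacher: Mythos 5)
Your proof is correct and follows essentially the same route as the paper's: for necessity you push a geodesic of $\overline{\Gamma}(L,\H_{L,Y},S)$ into the ambient graph and then reduce to a locally minimal quasigeodesic without backtracking, and for sufficiency you perform the same edge-by-edge analysis, with your finite set $S_1$ playing the role of the paper's $W_1\sqcup W_2$ and with Condition $(b)$ together with Lemma \ref{teq'} controlling exactly the $\mathcal H$-labelled edges that change right cosets. The only (harmless) divergence is that in the necessity direction you expand the $S$- and $Y$-labelled edges into fixed geodesics so as to land in $\overline{\Gamma}(G,\H,X)$ itself, at the cost of enlarging $Y$, whereas the paper builds its path in $\overline{\Gamma}(G,\H,X\sqcup Y\sqcup Y^{-1}\sqcup S)$.
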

\begin{proof}
Suppose that $L$ is undistorted relative to $\H$ in $G$.
Since $L$ is finitely generated relative to $\H$ in $G$, 
$L$ satisfies Condition $(b)$ with respect to $\H$ in $G$ by Lemma \ref{weakrelfg}.
We take a finite set $Y\subset G$ 
and a relative generating system $S$ of $(L,\H_{L,Y})$
such that all elements of $Y$ belong to 
mutually different right cosets of $L$ in $G$. 
We take $l\in L\setminus \{1\}$ and 
a geodesic $p$ from $1$ to $l$ in $\overline{\Gamma}(L,\H_{L,Y},S)$. 
When we replace each edge in $p$ labeled by $yhy^{-1}\in (L\cap yHy^{-1})\setminus \{1\}$ 
to a path of three edges labeled by $y\in Y$, $h\in H\in \H$ and $y^{-1}\in Y^{-1}$
for any $L\cap yHy^{-1}\in \H_{L,Y}$, 
we have a path $p'$ in $\overline{\Gamma}(G, \H, X\sqcup Y\sqcup Y^{-1}\sqcup S)$.
Since $L$ is undistorted relative to $\H$ in $G$, there exists a pair of constants 
$(\mu, C)$ (which is independent of $l$ and $p$)
such that $p'$ is a $(\mu,C)$-quasigeodesic from $1$ to $l$. 
We can take a locally minimal $(\mu,C)$-quasigeodesic $p''$ without backtracking
from $1$ to $l$ such that all vertices on $p''$ belongs to $p'$. 
Since all vertices on $p'$ belong to $L\cup LY$, 
all vertices on $p''$ also belong to $L\cup LY$.

Suppose that
$L$ satisfies Condition $(b)$ 
and there exist constants $\mu\ge 1$, $C\ge 0$ 
and a finite subset $Y\subset G$ satisfying the following:
for any $l\in L\setminus \{1\}$, there exists 
a locally minimal $(\mu, C)$-quasigeodesic $p$ in $\overline{\Gamma}(G,\H,X)$ 
without backtracking from $1$ to $l$ 
such that all vertices on $p$ are contained in $L\cup LY$. 
We can assume that $Y$ contains $1$ and 
all elements of $Y$ belong to 
mutually different right cosets of $L$ in $G$. 
First we prove that $L$ is finitely generated relative to $\H_{L,Y}$.
We put 
\[
W_1:=\{yxy'^{-1} \ |\ y\in Y, x\in X, y'\in Y, yxy'^{-1}\in L\}.
\] 
For $y,y'\in Y$ and $H\in \H$ such that 
$y\neq y'$ and $L\cap yHy'^{-1}\neq \emptyset$, 
we fix an element $l_{y,y',H}\in L\cap yHy'^{-1}$. 
Note that $L\cap yHy'^{-1}=(L\cap yHy^{-1})l_{y,y',H}$ by Lemma \ref{teq'}.
We put 
\[
W_2:=\{l_{y,y',H} \ | \ y,y'\in Y, H\in \H, y\neq y' , L\cap yHy'^{-1}\neq \emptyset\}, 
\]
which is finite by Condition $(b)$.
We take any $l\in L\setminus \{1\}$ and  
any locally minimal $(\mu, C)$-quasigeodesic $p$ without backtracking 
of $\overline{\Gamma}(G,\H,X)$ from $1$ to $l$ 
such that all vertices on $p$ are contained in $L\cup LY$.   
When we present $p=e_1e_2\cdots e_n$, 
for $i=1,\ldots,n$, there exists $y_i\in Y$ 
such that $(e_i)_-\in Ly_{i-1}$ and $(e_i)_+\in Ly_i$.  
Then we consider a sequence 
$y_0^{-1}\phi (e_1)y_1y_1^{-1}\phi(e_2)y_2y_2^{-1}\cdots y_{n-1}y_{n-1}^{-1}\phi(e_n)y_n$
of elements of $X\sqcup Y\sqcup Y^{-1}\sqcup \mathcal H$. 
If $\phi(e_i)$ is an element of $X$, then we regard $y_{i-1}\phi(e_i)y_i^{-1}$ as an element of $W_1$.  
If $\phi(e_i)$ is an element of $\mathcal H$ and $y_{i-1}=y_{i}$, then 
we regard $y_{i-1}\phi(e_i)y_i^{-1}$ as an element of $\mathcal H_{L,Y}$. 
If $\phi(e_i)$ is an element of $\mathcal H$ and $y_{i-1}\neq y_{i}$, then 
we replace $y_{i-1}\phi(e_i)y_i^{-1}$ with a sequence of an element of $\mathcal H_{L,Y}$ and 
$l_{y_{i-1},y_i,H_i}$
where $\phi(e_i)\in H_i\setminus \{1\}$. 
Then we have a path $p'$ of $\overline{\Gamma}(L,\H_{L,Y},W_1\sqcup W_2)$ from $1$ to $l$
labeled by such a sequence. 
This shows that $W_1\sqcup W_2$ is a finite relative generating system
of $(L, \H_{L,Y})$.
Also we have $l(p')\le 3l(p)$.
On the other hand
we can easily confirm that the natural embedding 
$(L,d_{S\sqcup {\mathcal H}_{L,Y}})\to (G,d_{X\sqcup Y\sqcup Y^{-1}\sqcup S\sqcup {\mathcal H}})$ 
is large-scale Lipschitz.
Hence $L$ is undistorted relative to $\H$ in $G$ in view of Lemma \ref{choiceX}.  
\end{proof}

\subsection{Relatively quasiconvex subgroups of relatively hyperbolic subgroups}
In this section we prove Theorems \ref{cap}, \ref{qc-undist} and \ref{4}.
See Definition \ref{rqc} for definition of relatively quasiconvex subgroups.

Lemma \ref{choiceX} and Proposition \ref{k-similar} imply the following:
\begin{Cor}\label{choice'''}
Let $G$ be hyperbolic relative to $\H$.  
Let $L$ be pre-quasiconvex (resp. quasiconvex) relative to $\H$ in $G$. 
Then for any finite generating system $X$ of $(G,\H)$, 
$L$ is pre-quasiconvex (resp. quasiconvex) relative to $\H$ in $G$ with respect to $X$. 
\end{Cor}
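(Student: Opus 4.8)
The plan is to prove the statement first for pre-quasiconvexity, since the quasiconvex case then follows for free: Condition $(b)$ (Definition \ref{**}) makes no reference to any relative generating system, so once pre-quasiconvexity is shown to be independent of the system, quasiconvexity with respect to one finite relative generating system gives quasiconvexity with respect to every one. Thus fix a finite relative generating system $X_0$ for which $L$ is pre-quasiconvex, with finite witness $Y_0$ as in Definition \ref{rqc}, and let $X$ be an arbitrary finite relative generating system. I would work throughout in the enlarged system $W:=X\sqcup X_0$, which is again a finite relative generating system of $(G,\H)$.

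The core of the argument compares, for mutually different $l_1,l_2\in L$, a geodesic $c$ of $\overline{\Gamma}(G,\H,X)$ from $l_1$ to $l_2$ with a geodesic $c_0$ of $\overline{\Gamma}(G,\H,X_0)$ from $l_1$ to $l_2$; both are regarded as paths of $\overline{\Gamma}(G,\H,W)$ with the same vertices. By Lemma \ref{choiceX} together with Lemma \ref{dict} (i), the metrics $d_{X\sqcup\mathcal H}$, $d_{X_0\sqcup\mathcal H}$ and $d_{W\sqcup\mathcal H}$ are mutually quasi-isometric, so $c$ and $c_0$ are $(\mu,C)$-quasigeodesics of $\overline{\Gamma}(G,\H,W)$ with constants $\mu,C$ depending only on $X$ and $X_0$. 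Being geodesics, $c$ and $c_0$ are locally minimal and without backtracking, and enlarging the generating system from $X$ (resp. $X_0$) to $W$ changes neither the $\mathcal H$-components nor their coset structure, so both properties persist in $\overline{\Gamma}(G,\H,W)$. The crucial observation is that, by local minimality, \emph{every} vertex of $c$ (resp. $c_0$) is a phase vertex in $\overline{\Gamma}(G,\H,W)$; moreover $c$ and $c_0$ share endpoints, hence are $0$-similar by $d_W$.

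I would then apply Proposition \ref{k-similar} in $\overline{\Gamma}(G,\H,W)$, which is legitimate because $W$ underlies a finite relative presentation with linear relative Dehn function when $G$ is hyperbolic relative to $\H$. This yields a finite relative generating system $Y'\supset W$ and a constant $K=K(\mu,C,0)$, uniform in $l_1,l_2$, such that the phase-vertex sets of $c$ and $c_0$ lie in the closed $K$-neighborhoods of each other with respect to $d_{Y'}$. Since the vertices of $c_0$ lie in $L\cup LY_0$ by pre-quasiconvexity with respect to $X_0$, each vertex $v$ of $c$ satisfies $v=wu$ with $w\in L\cup LY_0$ and $u$ of $Y'$-word-length at most $K$. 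Writing $B$ for the finite set of elements of $Y'$-word-length at most $K$ and setting $Y:=(\{1\}\cup Y_0)\cdot B$, every vertex of $c$ lies in $LY$, with $Y$ finite and independent of $l_1,l_2$. This is precisely pre-quasiconvexity with respect to $X$, and combining with the remarks above finishes both cases.

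The main obstacle is transporting $c$ from $\overline{\Gamma}(G,\H,X)$ to $\overline{\Gamma}(G,\H,W)$ while retaining the exact hypotheses of Proposition \ref{k-similar}: one must verify that a geodesic remains a locally minimal quasigeodesic without backtracking after enlarging the generating system, and---most importantly---use local minimality to upgrade the proposition's control of \emph{phase} vertices into control of \emph{all} vertices, which is what Definition \ref{rqc} demands. A second delicate point is that $d_{Y'}$ must be read as the word metric using only $Y'$ (not $Y'\sqcup\mathcal H$), so that the $K$-ball $B$ is genuinely finite; this is exactly what forces the witness $Y$ to be finite.
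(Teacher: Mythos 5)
Your argument is correct and is exactly the route the paper intends: the paper derives Corollary \ref{choice'''} directly from Lemma \ref{choiceX} and Proposition \ref{k-similar}, and your proposal simply fills in the details of that derivation (passing to the union system, noting that geodesics are locally minimal quasigeodesics without backtracking so that all their vertices are phase vertices, and invoking the $d_{Y'}$-neighborhood control for $0$-similar paths). The observation that Condition $(b)$ is independent of the generating system, so the quasiconvex case follows from the pre-quasiconvex one, is likewise what the paper relies on.
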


The following is well-known for the case of countable groups 
(see for example \cite[Lemma 9.4]{Hru10}).
\begin{Lem}\label{teq}
Let $H$ and $K$ be subgroups of $G$. 
Suppose that $aH$ and $bK$ are arbitrary left cosets of subgroups of $G$.
For any finite subset $X$ of $G$, there exists a finite subset $Y$ of $G$ such that 
\[
a(H\cup HX)\cap b(K\cup KX)\subset (aHa^{-1}\cap bKb^{-1})\cup(aHa^{-1}\cap bKb^{-1})Y.
\]
\end{Lem}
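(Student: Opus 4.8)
The plan is to pass from the subgroups $H,K$ to their conjugates $H'=aHa^{-1}$ and $K'=bKb^{-1}$, whose intersection $P=H'\cap K'=aHa^{-1}\cap bKb^{-1}$ is exactly the subgroup appearing on the right-hand side. The crucial observation is that each fattened coset is a \emph{finite} union of \emph{right} cosets of these conjugates: since $aHx=aHa^{-1}(ax)=H'(ax)$, and in particular $aH=H'a$, one gets $a(H\cup HX)=\bigcup_{z\in\{a\}\cup aX}H'z$, and likewise $b(K\cup KX)=\bigcup_{w\in\{b\}\cup bX}K'w$, where both index sets are finite, of size at most $1+\#X$. I would stress that working with right cosets of $H'$ and $K'$ (rather than with left cosets of $H$ and $K$ directly) is what makes $P$ appear naturally; this reformulation is the main conceptual point, since a naive intersection of the original cosets would produce a translate of $H\cap K$ instead of the required $P$.

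Next I would distribute the intersection over these two finite unions, writing $a(H\cup HX)\cap b(K\cup KX)=\bigcup_{z,w}\bigl(H'z\cap K'w\bigr)$, the union being over the finitely many pairs $(z,w)$ with $z\in\{a\}\cup aX$ and $w\in\{b\}\cup bX$. The key step is then to analyse a single term $H'z\cap K'w$. If it is empty there is nothing to do; otherwise I choose any point $g_{z,w}$ in it. Since $g_{z,w}\in H'z$ forces $H'z=H'g_{z,w}$, and similarly $K'w=K'g_{z,w}$, the term becomes $H'g_{z,w}\cap K'g_{z,w}$, which equals $(H'\cap K')g_{z,w}=Pg_{z,w}$: indeed $x\in H'g_{z,w}\cap K'g_{z,w}$ if and only if $xg_{z,w}^{-1}\in H'\cap K'=P$.

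Finally I would take $Y$ to be the set of all chosen representatives $g_{z,w}$, over the finitely many nonempty pairs; this $Y$ is finite, and the computation above yields $a(H\cup HX)\cap b(K\cup KX)=\bigcup_{z,w}Pg_{z,w}=PY\subset P\cup PY$, as required. I do not expect a genuine obstacle here: once the fattened cosets are rewritten as finite unions of right cosets of $aHa^{-1}$ and $bKb^{-1}$, the argument is pure coset bookkeeping. The only point demanding care is to base each term at a point $g_{z,w}$ lying in the conjugate-subgroup cosets, so that the distinguished subgroup $P$ — and not some translate of $H\cap K$ — controls every piece of the intersection.
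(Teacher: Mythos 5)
Your proposal is correct and takes essentially the same approach as the paper: the paper's set $M_{x_1,x_2}$ is precisely your piece $H'(ax_1)\cap K'(bx_2)$, and its computation $zz_{x_1,x_2}^{-1}=ahh'^{-1}a^{-1}=bkk'^{-1}b^{-1}\in aHa^{-1}\cap bKb^{-1}$ is exactly your observation that two intersecting right cosets of $H'$ (resp.\ $K'$) coincide. The only cosmetic difference is that the paper absorbs the unfattened cosets by assuming $1\in X$, while you list $a$ and $b$ separately in the index sets.
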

\begin{proof}
We assume that $X$ has $1$ without loss of generality. 
For any element $z\in aHX\cap bKX$, 
there exists a pair $(x_1,x_2)\in X\times X$
such that $z=ahx_1=bkx_2$ for some pair $(h,k)\in H\times K$. 
We define a subset $M_{x_1,x_2}\subset aHX\cap bKX$
for any $(x_1,x_2)\in X\times X$ as the set of such elements. 
Then we have $aHX\cap bKX=\bigcup_{(x_1,x_2)\in X\times X} M_{x_1,x_2}$. 
For every $x_1,x_2\in X$ such that $M_{x_1,x_2}\neq \emptyset$, 
we fix an element $z_{x_1,x_2}=ah'x_1=bk'x_2\in M_{x_1,x_2}$. 
For any $z=ahx_1=bkx_2\in M_{x_1,x_2}$, we have 
$zz_{x_1,x_2}^{-1}=ahh'^{-1}a^{-1}=bkk'^{-1}b^{-1}\in aHa^{-1}\cap bKb^{-1}$. 
When we define $Y:=\{z_{x_1,x_2}\ | \ (x_1,x_2)\in X\times X, \ M_{x_1,x_2}\neq \emptyset\}$, 
we have $aHX\cap bKX\subset (aHa^{-1}\cap bKb^{-1})Y$.
\end{proof}

\begin{proof}[Proof of Theorem \ref{cap}]
Let $X$ be a finite relative generating system of $(G,\H)$. 
We take an arbitrary element $l \in  (K\cap L) \setminus \{1\}$ 
and denote by $p$ a geodesic from $1$ to $l$ in $\overline{\Gamma}(G, \H, X)$. 
Since both $K$ and $L$ are pre-quasiconvex relative to $\H$ in $G$, 
there exists a finite subset $Y$ of $G$ 
such that any vertex $v$ of $p$ is included in $(K\cup KY) \cap (L\cup LY)$. 
By Lemma \ref{teq}, 
there exists a finite subset $Z$ of $G$ such that 
$(K\cup KY) \cap (L\cup LY) \subset (K \cap L)\cup (K \cap L)Z$. 
We thus obtain $v \in (K\cap L)\cup(K\cap L)Z$.
Hence $K\cap L$ is pre-quasiconvex relative to $\H$ in $G$.

By Lemma \ref{cap'}, the group $K\cap L$ satisfies Condition $(b)$
with respect to $\H$ in $G$.
\end{proof}

\begin{proof}[Proof of Theorem \ref{qc-undist}]
First we prove the assertion (i).

It follows from Proposition \ref{rundqc}
that $L$ is undistorted relative to $\H$
if $L$ is quasiconvex relative to $\H$.

It follows from Proposition \ref{rundqc} and Proposition \ref{k-similar}
that $L$ is quasiconvex relative to $\H$
if $L$ is undistorted relative to $\H$.

Next we prove the assertion (ii).

We take a finite relative generating system $X$ of $(G,\H)$. 

Suppose that $L$ is strongly quasiconvex relative to $\H$ in $G$. 
By the assertion (i), $L$ is undistorted relative to $\H$ in $G$. 
We take a finite subset $Y$ of $G$ and a finite generating system $S'$ of $(L,\H_{L,Y})$
such that the natural embedding from $(L,d_{S'\sqcup \mathcal H_{L,Y}})$
to $(G,d_{X\sqcup \mathcal H})$ is quasi-isometric. 
We assume that all of the elements of $Y$ belong to mutually different 
right cosets of $L$ in $G$.  
By definition, $\H_{L,Y}$ is a finite set and every element is a finite subgroup of $L$. 
Hence $L$ has a finite generating system $S=S'\sqcup \mathcal H_{L,Y}$ and 
thus the natural embedding from $(L, d_{S})$ to $(G,d_{X\sqcup \mathcal H})$ is quasi-isometric.

Suppose that $L$ is strongly undistorted relative to $\H$ in $G$. 
We take a finite generating system $S$ of $L$. 
By taking $Y=\emptyset\subset G$, we recognize that 
$L$ is undistorted relative to $\H$ in $G$. 
By the assertion (i), $L$ is quasiconvex relative to $\H$ in $G$. 
Assume that 
the subgroup $L\cap gHg^{-1}$ is infinite for some $g\in G$ and $H\in\H$. 
Then $(L,d_S)$ and $(L,d_{S\sqcup \mathcal H_{L,\{g\}}})$ are not quasi-isometric. 
This contradicts Lemma \ref{choice}. 
Assume that $\H_{L,\{g\}}$ is infinite for some $g\in G$. 
Since $(L, \H_{L,\{g\}})$ satisfies Condition $(a)$ by Lemma \ref{2''}, 
$(L,d_S)$ and $(L,d_{S\sqcup \mathcal H_{L,\{g\}}})$ are not quasi-isometric. 
This contradicts Lemma \ref{choice}.
Hence $\H_{L,\{g\}}$ is a finite family consisting of finite subgroups of $L$
for any $g\in G$ 
and thus $L$ is strongly quasiconvex relative to $\mathbb{H}$ in $G$. 
\end{proof}

\begin{Ex}\label{Q'}
Let $G$ be hyperbolic relative to $\H$. 
We take a finite relative presentation $(X,{\mathcal R})$. 
Then the subgroup $Q$ in Example \ref{Q} 
is quasiconvex relative to $\H$ in $G$ 
by Theorem \ref{qc-undist}.
\end{Ex}

We have the following by Theorem \ref{qc-undist}
and Lemma \ref{conjundist}.
\begin{Cor}\label{conjqc}
Let $G$ be hyperbolic relative to $\H$.  
Then $L$ is (strongly) quasiconvex relative to $\H$ in $G$
if and only if $gLg^{-1}$ is (strongly) quasiconvex relative to $\H$ in $G$. 
\end{Cor}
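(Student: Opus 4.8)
The plan is to deduce the statement directly from the two results cited in the remark preceding it, by assembling a chain of equivalences that passes through relative undistortedness. First I would observe that the hypothesis that $G$ is hyperbolic relative to $\H$ guarantees, via Definition \ref{O} and Remark \ref{aut}, that $(G,\H)$ admits a finite relative generating system and that Condition $(a)$ holds; these are precisely the standing assumptions under which Theorem \ref{qc-undist} and Lemma \ref{conjundist} are valid, so both may be invoked freely (and likewise for the conjugate $gLg^{-1}$, which is a subgroup of the same $G$).

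For the non-strong statement I would fix $g\in G$ and run the chain
\begin{align*}
L \text{ is quasiconvex rel. } \H
&\iff L \text{ is undistorted rel. } \H\\
&\iff gLg^{-1} \text{ is undistorted rel. } \H\\
&\iff gLg^{-1} \text{ is quasiconvex rel. } \H,
\end{align*}
in which the first and last equivalences are Theorem \ref{qc-undist} (i) applied to $L$ and to $gLg^{-1}$ respectively, while the middle equivalence is exactly Lemma \ref{conjundist}. The strongly quasiconvex case is identical in structure: I would replace ``quasiconvex'' by ``strongly quasiconvex'' and ``undistorted'' by ``strongly undistorted'' throughout, appealing to Theorem \ref{qc-undist} (ii) for the outer two equivalences and to the strong half of Lemma \ref{conjundist} for the middle one.

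There is essentially no genuine obstacle here; the proof is purely formal once the two cited results are in hand. The only point requiring attention is bookkeeping: one must apply Theorem \ref{qc-undist} once to $L$ and once to its conjugate $gLg^{-1}$, so that the two occurrences of relative undistortedness in the chain line up with the single equivalence supplied by Lemma \ref{conjundist}. Since Lemma \ref{conjundist} is already phrased as an equivalence holding for every $g\in G$, no separate treatment of the two directions of the corollary is needed.
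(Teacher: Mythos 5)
Your proposal is correct and matches the paper's own argument exactly: the corollary is stated as following from Theorem \ref{qc-undist} and Lemma \ref{conjundist}, which is precisely the chain of equivalences you describe (quasiconvexity $\Leftrightarrow$ undistortedness for $L$, conjugation-invariance of undistortedness, then back again for $gLg^{-1}$, with the strong versions handled in parallel). Nothing further is needed.
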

\noindent
The case where $G$ is countable and $\H$ is finite was known (see \cite[Corollary 3.5]{MP-W11a}). 

\begin{Thm}\label{4'}
Let $G$ be hyperbolic relative to $\H$
and $X$ be a finite relative generating system of $(G,\H)$. 
Let $L$ be quasiconvex relative to $\H$ in $G$. 
Then for any finite subset $Y=\{y_1,\ldots,y_n\}\subset G$ 
such that all elements of $Y$ belong to mutually different right cosets of $L$ in $G$
and $L$ is finitely generated relative to $\H_{L,Y}$, 
$L$ is hyperbolic relative to $\H_{L,Y}^r$. 

Moreover for any finite subset 
$Y'=\{y_1,\ldots,y_n,y_{n+1}\ldots,y_{n'}\}\subset G$ and  
all elements of $Y'$ belong to mutually different right cosets of $L$ in $G$, 
$\H_{L,Y'}^r\setminus \H_{L,Y}^r$ is a finite family of finite subgroups of $L$.
\end{Thm}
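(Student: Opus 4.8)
The plan is to verify condition~(iii) of Proposition~\ref{relhypeq} for the triple $(L,\H_{L,Y}^r,S)$, where $S$ is a finite relative generating system of $(L,\H_{L,Y}^r)$, and then to treat the moreover part. Since $L$ is quasiconvex relative to $\H$ in $G$, Theorem~\ref{qc-undist}~(i) shows that $L$ is undistorted relative to $\H$ in $G$; using Lemmas~\ref{Y'}, \ref{choice} and~\ref{choiceY}, which make undistortion independent of the choice of coset representatives, I may assume for the given $Y$ that the natural embedding $(L,d_{S'\sqcup\mathcal H_{L,Y}})\to(G,d_{X\sqcup\mathcal H})$ is quasi-isometric for a suitable finite relative generating system $S'$ of $(L,\H_{L,Y})$. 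By Lemma~\ref{red'} the triple $(L,\H_{L,Y}^r,S)$ satisfies Condition~$(a)$, the metrics $d_{S\sqcup\mathcal H_{L,Y}^r}$ and $d_{S\sqcup\mathcal H_{L,Y}}$ are quasi-isometric on $L$, and therefore $(L,d_{S\sqcup\mathcal H_{L,Y}^r})\to(G,d_{X\sqcup\mathcal H})$ is a quasi-isometric embedding into $\overline{\Gamma}(G,\H,X)$, which is hyperbolic because $G$ is hyperbolic relative to $\H$.

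For hyperbolicity of $\overline{\Gamma}(L,\H_{L,Y}^r,S)$ I would use that, by Definition~\ref{rqc}, every geodesic of $\overline{\Gamma}(G,\H,X)$ joining two elements of $L$ stays in $L\cup LY$, a set contained in a bounded neighbourhood of $L$ since $Y$ is finite; hence $L$ is a quasiconvex subset of the hyperbolic space $\overline{\Gamma}(G,\H,X)$. A quasiconvex subset of a hyperbolic geodesic space is hyperbolic for its induced length metric, and that length metric is quasi-isometric to $d_{S\sqcup\mathcal H_{L,Y}^r}$ by the reductions, so $\overline{\Gamma}(L,\H_{L,Y}^r,S)$ is hyperbolic. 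With Condition~$(a)$ already in hand, it remains only to verify BCP property.

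The BCP transfer is the main obstacle. Given $(\mu,C)$-quasigeodesics $p,q$ without backtracking in $\overline{\Gamma}(L,\H_{L,Y}^r,S)$ with $p_-=q_-$ and $p_+=q_+$, I would push them into $\overline{\Gamma}(G,\H,X)$ by replacing every edge labelled $y_ihy_i^{-1}\in(L\cap y_iHy_i^{-1})\setminus\{1\}$ with the three-edge path labelled $y_i,h,y_i^{-1}$, obtaining paths $\widetilde p,\widetilde q$ with the same endpoints that are quasigeodesics by undistortion, with constants depending only on $\mu,C$. The point of the reduced family $\H_{L,Y}^r$ is that it preserves the connectedness pattern of peripheral components: an $(L\cap y_iHy_i^{-1})$-component corresponds to the middle $H$-edge of its image, two such components lie in a common left coset of $L\cap y_iHy_i^{-1}$ exactly when their images lie in a common left coset $l\,y_iH$ of $G$, and images coming from different reduced peripherals can never be connected in $G$, precisely because $L\cap y_iHy_j^{-1}=\emptyset$ for the indices surviving in $\H_{L,Y}^r$ (Lemma~\ref{teq'}). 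Thus $\widetilde p,\widetilde q$ are again without backtracking, and if a component $s$ of $p$ is connected to no component of $q$, then its image is an $H$-component of $\widetilde p$ connected to no $H$-component of $\widetilde q$. Applying BCP property for $(G,\H,X)$, which holds by Proposition~\ref{relhypeq}~(iii) (equivalently Proposition~\ref{k-similar}, since $\widetilde p,\widetilde q$ are $0$-similar), bounds the length of that image in the auxiliary finite relative generating system of $G$ from Definition~\ref{BCP}, and translating the bound back bounds $d(s_-,s_+)$ in a finite relative generating system of $L$. This yields BCP property for $(L,\H_{L,Y}^r,S)$, and together with the previous paragraph Proposition~\ref{relhypeq}~(iii) gives the first assertion.

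For the moreover part, finiteness of $\H_{L,Y'}^r\setminus\H_{L,Y}^r$ is Lemma~\ref{red''}, which applies since $Y\subset Y'$ and $L$ is finitely generated relative to $\H_{L,Y}$; its members are exactly the subgroups $P':=L\cap y_jHy_j^{-1}$ with $y_j\in Y'\setminus Y$. To see that each $P'$ is finite, I would note that $p'y_j\in y_jH$ for every $p'\in P'$, whence $d_{X\sqcup\mathcal H}(1,p')\le 2\,d_{X\sqcup\mathcal H}(1,y_j)+1$; so $P'$ has bounded diameter in $\overline{\Gamma}(G,\H,X)$, and by undistortion and quasiconvexity any two of its elements are joined within uniformly bounded distance through $L$. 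If $P'$ were infinite, the infinitely many cone-edges joining $v(y_jH)$ to the points $y_jh$ with $p'=y_jhy_j^{-1}\in P'$, combined with these uniformly bounded connections through $L$, would produce infinitely many distinct circuits of uniformly bounded length through a fixed cone-edge at $v(y_jH)$, contradicting fineness of $\widehat{\Gamma}(G,\H,X)$ (which holds by Proposition~\ref{relhypeq}~(i)); this is modelled on the argument of Lemma~\ref{locfinite}. Hence every such $P'$ is a finite subgroup of $L$.
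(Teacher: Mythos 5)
Your overall route is genuinely different from the paper's: you aim at condition (iii) of Proposition \ref{relhypeq} (hyperbolicity of the relative Cayley graph, BCP property, Condition $(a)$), whereas the paper verifies condition (i) by building an explicit injective correspondence $\iota$ sending circuits of $\widehat{\Gamma}(L,\H_{L,Y}^r,S)$ to circuits of at most twice the length in $\widehat{\Gamma}(G,\H,X\sqcup Y\sqcup Y^{-1}\sqcup S)$, so that fineness is inherited directly. Your treatment of Condition $(a)$ (via Lemma \ref{red'}) and of hyperbolicity (quasi-isometric embedding with quasiconvex image in a hyperbolic graph) is fine, and your argument for the ``moreover'' part --- bounded diameter of $L\cap y_jHy_j^{-1}$ in $\overline{\Gamma}(G,\H,X)$ combined with fineness, in the style of Lemma \ref{locfinite} --- is a workable and arguably more self-contained alternative to the paper's appeal to \cite[Theorem 1.5]{Osi06b}, provided you make explicit that the connecting path through $L$ from $y_jh$ back to $y_j$ avoids the coset $y_jH$ \emph{because} $L\cap y_iHy_j^{-1}=\emptyset$ for all $i<j$, which is exactly the reducedness condition defining membership in $\H_{L,Y'}^r$; without that the cycles you build need not yield circuits through the fixed cone-edge.

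The genuine gap is the last step of your BCP transfer, ``translating the bound back bounds $d(s_-,s_+)$ in a finite relative generating system of $L$.'' Definition \ref{BCP} demands a single finite relative generating system $Y_L$ of $(L,\H_{L,Y}^r)$, chosen once, such that for every $(\mu,C)$ each unmatched component $s$ satisfies $d_{Y_L}(s_-,s_+)\le a'(\mu,C)$; in particular $s_-^{-1}s_+$ must lie in $\langle Y_L\rangle$ with controlled word length. What your argument actually produces is a bound on $s_-^{-1}s_+$ in the word metric of $G$ with respect to the finite set $Y_G\cup Y\cup Y^{-1}$ coming from $G$'s BCP. That shows the set of possible values of $s_-^{-1}s_+$ is finite for each fixed $(\mu,C)$, but these values are elements of the possibly infinite groups $K\in\H_{L,Y}^r$ inside the possibly non-finitely-generated group $L$, and as $(\mu,C)$ varies there is no reason for them all to lie in the subgroup of $L$ generated by any fixed finite $Y_L$. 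Undistortion does not help: it bounds $d_{S\sqcup\mathcal H_{L,Y}}(s_-,s_+)$, which is vacuous since $s_-^{-1}s_+$ is a single letter of $\mathcal H_{L,Y}^r$. This is precisely the quantitative difficulty the paper sidesteps by proving fineness of $\widehat{\Gamma}(L,\H_{L,Y}^r,S)$ combinatorially; to rescue your route you would need to reformulate BCP so that its conclusion is ``finitely many values of $s_-^{-1}s_+$ per $(\mu,C)$'' and then redo the counting argument of Lemma \ref{circuit} with that weaker input, which amounts to reproving fineness anyway.
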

\begin{proof}
Let $S$ be a finite relative generating system of $(L, \H_{L,Y})$.
By Theorem \ref{qc-undist} and Lemma \ref{red'}, $(L,\H_{L,Y}^r)$ satisfies Condition $(a)$
and $\widehat{\Gamma}(L,\H_{L,Y}^r,S)$ is hyperbolic. 
We prove that $\widehat{\Gamma}(L,\H_{L,Y}^r,S)$ is fine. 

First we define a correspondence $\iota$ 
from every vertex of $\widehat{\Gamma}(L,\H_{L,Y}^r,S)$ 
to a vertex of $\widehat{\Gamma}(G,\H,X\sqcup Y\sqcup Y^{-1}\sqcup S)$ as follows: 
\begin{align*}
\iota:L\ni l&\mapsto l\in G, \\
\iota:v(l(L\cap y_jHy_j^{-1}))&\mapsto v(ly_jH), 
\end{align*}
where $v(l(L\cap y_jHy_j^{-1}))$ is a cone-vertex of $\widehat{\Gamma}(L,\H_{L,Y}^r,S)$
and $v(ly_jH)$ is a cone-vertex of $\widehat{\Gamma}(G,\H,X\sqcup Y\sqcup Y^{-1}\sqcup S)$. 
The latter correspondence is well-defined. Indeed 
for any $l_1,l_2\in L$, $y_j\in Y$ and $H\in \H$ 
such that $L\cap y_jHy_j^{-1}\in \H_{L,Y}^r$, 
$l_1(L\cap y_jHy_j^{-1})=l_2(L\cap y_jHy_j^{-1})$ 
if and only if $l_1y_jH=l_2y_jH$.
This correspondence $\iota$ is injective. 
Indeed 
for any $l_1,l_2\in L$, $y_i,y_j\in Y$ and $H\in \H$ 
such that $L\cap y_iHy_i^{-1}, L\cap y_jHy_j^{-1}\in \H_{L,Y}^r$
and $i \le j$, 
if $l_1y_iH=l_2y_jH$, then we have $L\cap y_iHy_j^{-1}\neq \emptyset$. 
If $i<j$, then $L\cap y_jHy_j^{-1}\not\in \H_{L,Y}^r$ and thus $i=j$. 
This implies that $l_1(L\cap y_iHy_i^{-1})=l_2(L\cap y_jHy_j^{-1})$.

Next we extend $\iota$ to a correspondence from 
edges of $\widehat{\Gamma}(L,\H_{L,Y}^r,S)$ 
to paths of $\widehat{\Gamma}(G,\H,X\sqcup Y\sqcup Y^{-1}\sqcup S)$ as follows: 
\[
\iota:L\times S\ni (l,s)\mapsto (l,s)\in G\times (X\sqcup Y\sqcup Y^{-1}\sqcup S), 
\]
\begin{align*}
\iota:[l,v(l(L\cap y_jHy_j^{-1}))]&\mapsto (l,y_j)[ly_j, v(ly_jH)]\text{ for }y_j\in Y\setminus \{1\}, \\
\iota:[v(l(L\cap y_jHy_j^{-1})),l]&\mapsto [v(ly_jH),ly_j](ly_j,y_j^{-1})\text{ for }y_j\in Y\setminus \{1\}, 
\end{align*}
\begin{align*}
\iota:[l,v(l(L\cap 1H1^{-1}))]&\mapsto [l, v(lH)]\text{ for }y_j\in Y\cap \{1\}, \\
\iota:[v(l(L\cap 1H1^{-1})),l]&\mapsto [v(lH),l]\text{ for }y_j\in Y\cap \{1\}. 
\end{align*}

The above correspondence $\iota$ gives a faithful correspondence 
from each circuit of $\widehat{\Gamma}(L,\H_{L,Y}^r,S)$
to a cycle of $\widehat{\Gamma}(G,\H,X\sqcup Y\sqcup Y^{-1}\sqcup S)$.
Removing cycles of the form $(ly_j,y_j^{-1})(l,y_j)$ 
from the resulting cycle, we have a circuit by choice of $Y$. 
Such a circuit of $\widehat{\Gamma}(G,\H,X\sqcup Y\sqcup Y^{-1}\sqcup S)$ 
has at most twice length of the original circuit
of $\widehat{\Gamma}(L,\H_{L,Y}^r,S)$. 
Also this correspondence is faithful by choice of $Y$.
Each cycle containing an edge 
\[
(l,s),[l,v(l(L\cap y_jHy_j^{-1}))],
[v(l(L\cap y_jHy_j^{-1})),l],[l,v(l(L\cap 1H1^{-1}))],[v(l(L\cap 1H1^{-1})),l]
\]
gives a circuit containing an edge 
\[
(l,s),[ly_j, v(ly_jH)],
[v(ly_jH),ly],[l, v(lH)],[v(lH),l], 
\]  
respectively.
Since $\widehat{\Gamma}(G,\H,X\sqcup Y\sqcup Y^{-1}\sqcup S)$ is fine, 
$\widehat{\Gamma}(L,\H_{L,Y}^r,S)$ is also fine.

It follows from Lemma \ref{red''} that $\H_{L,Y'}^r\setminus \H_{L,Y}^r$ is a finite family.
Assume that we have an infinite subgroup $K\in \H_{L,Y'}^r\setminus \H_{L,Y}^r$. 
Then \cite[Theorem 1.5]{Osi06b} implies that 
$K$ is strongly undistorted relative to $\H_{L,Y}^r$ in $L$. 
In particular $K$ has infinite diameter in $(L, d_{S\sqcup \mathcal H_{L,Y}^r})$. 
On the other hand $K$ has finite diameter in $(L, d_{S\sqcup \mathcal H_{L,Y'}^r})$.
These contradict the fact that the identity map on $L$ is
quasi-isometric from $(L,d_{S\sqcup \mathcal H^r_{L,Y}})$ to
$(L,d_{S\sqcup \mathcal H_{L,Y'}})$ by Lemma \ref{red'} and Lemma \ref{choice}.
\end{proof}

\begin{Ex}\label{Q''}
Let $G$ be hyperbolic relative to $\H$. 
We take a finite relative presentation $(X,{\mathcal R})$. 
Then the subgroup $Q$ in Example \ref{Q} 
is hyperbolic relative to $\H_{Q,\emptyset}:=\{Q\cap H\neq \{1\} \ | \ H\in \H\}$
(\cite[Corollary 2.47]{Osi06a}).
Theorem \ref{4'} and Example \ref{Q'} give another proof for the fact. 
\end{Ex}

The following gives simple examples of relatively quasiconvex subgroups.  
\begin{Prop}\label{rel0hyp}
Let $G$ be $\ast_{H\in \H}H$. 
Suppose that $L$ is finitely generated relative to $\H$ in $G$, 
then $L$ is quasiconvex relative to $\H$ in $G$.
\end{Prop}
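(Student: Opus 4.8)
The plan is to work throughout with the empty relative generating system $X=\emptyset$, which is legitimate because $G=\ast_{H\in\H}H$ is generated by the members of $\H$; with this choice $\overline{\Gamma}(G,\H,\emptyset)=\Gamma(G,\mathcal H)$ records the free-product normal form and the coned-off graph $\widehat{\Gamma}(G,\H,\emptyset)$ is the Bass--Serre tree of the free-product decomposition. First I would collect the formal prerequisites. The pair $(G,\H)$ is relatively hyperbolic via the trivial relative presentation $(\emptyset,\emptyset)$, whose relative Dehn function is identically $0$, so Definition \ref{rqc} applies. Moreover $(G,\H,\emptyset)$ satisfies Condition $(a)$: the empty subfamily $\H'=\emptyset$ verifies condition (iv) of Lemma \ref{cycle}, since $\langle\emptyset,\emptyset\rangle\ast(\ast_{H\in\H}H)=G$. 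As $L$ is finitely generated relative to $\H$ by hypothesis, Lemma \ref{weakrelfg} then yields at once that $L$ satisfies Condition $(b)$. By Definition \ref{rqc} it remains only to show that $L$ is pre-quasiconvex relative to $\H$, and after left-translating a geodesic from $l_1$ to $l_2$ by $l_1^{-1}\in L$ it is enough to find a finite $Y\subset G$ so that for every $l\in L\setminus\{1\}$ every vertex of the geodesic from $1$ to $l$ in $\overline{\Gamma}(G,\H,\emptyset)$ lies in $L\cup LY$.

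Next I would set up the combinatorics. Every $\overline{\Gamma}(G,\H,\emptyset)$-geodesic from $1$ to $l$ has length equal to the syllable length of $l$, hence reads the (unique) free-product normal form $l=b_1\cdots b_k$, so its vertices are precisely the prefixes $b_1\cdots b_i$. Using the hypothesis together with Lemma \ref{Y}, fix a finite $Y_0\subset G$ whose elements lie in distinct right cosets of $L$ and a finite relative generating system $S$ of $(L,\H_{L,Y_0})$. Writing $l=u_1\cdots u_m$ with each $u_j\in S\sqcup\mathcal H_{L,Y_0}$, the partial products $p_j:=u_1\cdots u_j$ lie in $L$, and the concatenation of the geodesics $[p_{j-1},p_j]=p_{j-1}[1,u_j]$ is a walk from $1$ to $l$ in the tree $\widehat{\Gamma}(G,\H,\emptyset)$. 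The decisive structural step is that in a tree every vertex of the geodesic $[1,l]$ occurs on any walk from $1$ to $l$; translating back through Lemma \ref{dict}, every prefix of $l$ is therefore of the form $p_{j-1}\,q$ where $q$ is a prefix of the normal form of a single generator $u_j$.

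It then suffices to control the cosets $L\,p_{j-1}q=L\,q$ as $q$ ranges over prefixes of the generators. For $u_j\in S$ there are only finitely many $q$, giving the finitely many cosets $L\,z$ with $z$ a prefix of some $s\in S$. For $u_j=yhy^{-1}\in(L\cap yHy^{-1})\setminus\{1\}$, $y\in Y_0$, the point at which infinitely many $h\in H$ enter is handled by the algebraic identity: writing $y=a_1\cdots a_r$ in normal form, every prefix $q$ of $u_j$ reaching past the conjugator satisfies $q=u_j\,(a_1\cdots a_s)$ for some $s$, whence $L\,q=L\,(a_1\cdots a_s)$ is again a coset $L\,z$ with $z$ a prefix of $y$; the prefixes not reaching past the conjugator are themselves prefixes of $y$. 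Consequently every prefix of every $l\in L$ lies in $LZ$, where $Z$ is the finite set of normal-form prefixes of the elements of $Y_0$ and of $S$; taking $Y:=Z\setminus\{1\}$ establishes pre-quasiconvexity, and together with Condition $(b)$ this shows $L$ is quasiconvex relative to $\H$.

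The main obstacle is exactly this last coset computation. A single generator $yhy^{-1}$ has a normal form built from the arbitrary element $h\in H$, so its prefixes naively form an infinite set and could a priori spread over infinitely many cosets of $L$; the argument closes only because $yhy^{-1}\in L$ forces every prefix lying beyond the conjugator back into a coset $L\,z$ with $z$ a prefix of $y$, collapsing the dependence on $h$. Carrying this out cleanly --- in particular treating the cancellation case where the last syllable of $y$ lies in $H$, so that the normal form of $u_j$ is shorter than $yhy^{-1}$ --- is the technical heart, and is what the identity $q=u_j\,(a_1\cdots a_s)$ is designed to absorb.
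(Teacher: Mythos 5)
Your proof is correct, but it takes a genuinely different route from the paper's. The paper does not verify Definition \ref{rqc} directly: it builds the correspondence $\iota$ from $\widehat{\Gamma}(L,\H_{L,Y}^r,S)$ into the tree $\widehat{\Gamma}(G,\H,\emptyset)$ (the same device as in the proof of Theorem \ref{4'}), observes that the image of $\iota$ is a subtree so that arcs between elements of $L$ map, after removing backtracks, to geodesics of at most twice the length, concludes that the identity on $L$ is a quasi-isometry from $(L,d_{S\sqcup\mathcal H_{L,Y}^r})$ to $L$ with the metric induced from $\widehat{\Gamma}(G,\H,\emptyset)$ --- i.e.\ that $L$ is undistorted relative to $\H$ --- and then invokes Theorem \ref{qc-undist}~(i). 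You instead check pre-quasiconvexity straight from Definition \ref{rqc}: the geodesic in $\overline{\Gamma}(G,\H,\emptyset)$ from $1$ to $l$ is unique and reads the free-product normal form, any walk in a tree covers the geodesic between its endpoints, and the prefix/coset computation (including the identity $q=u_j(a_1\cdots a_s)$ absorbing the dependence on $h$, and the cancellation case) confines all prefixes to $LZ$ for the finite set $Z$ of normal-form prefixes of $S\cup Y_0$; Condition $(b)$ comes separately from Lemma \ref{weakrelfg}. What each approach buys: the paper's argument is shorter on the page because it delegates the work to Theorem \ref{qc-undist}, which in turn rests on Proposition \ref{rundqc} and the comparatively heavy Proposition \ref{k-similar}; your argument is self-contained and elementary for this special case, at the price of the explicit normal-form bookkeeping, and it yields the slightly stronger conclusion that geodesics (not merely some quasigeodesics) between elements of $L$ stay in $L\cup LY$. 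Both are valid proofs of the proposition.
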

\begin{proof}
We consider $\widehat{\Gamma}(G,\H,\emptyset)$, which is a tree. 
Also we suppose that 
$(L,\H_{L,Y}^r)$ has a finite relative generating system $S$ 
where $Y$ is a finite subset of $G$ such that 
all elements of $Y$ belong to mutually different right cosets of $L$ in $G$. 
Then we consider a correspondence $\iota$ 
from every vertex of $\widehat{\Gamma}(L,\H_{L,Y}^r,S)$ 
to a vertex of $\widehat{\Gamma}(G,\H,\emptyset)$ as follows: 
\begin{align*}
\iota:L\ni l&\mapsto l\in G, \\
\iota:v(l(L\cap y_jHy_j^{-1}))&\mapsto v(ly_jH), 
\end{align*}
where $v(l(L\cap y_jHy_j^{-1}))$ is a cone-vertex of 
$\widehat{\Gamma}(L,\H_{L,Y}^r,S)$
and $v(ly_jH)$ is a cone-vertex of $\widehat{\Gamma}(G,\H,\emptyset)$. 
The correspondence is well-defined and injective (see Proof of Theorem \ref{4'}).

Since we have the geodesic $p_s$ of $\widehat{\Gamma}(G,\H,\emptyset)$ from $1$ to $s$ for each $s\in S$ 
and the geodesic $p_y$ of $\widehat{\Gamma}(G,\H,\emptyset)$ from $1$ to $y$ for each $y\in Y$, 
we extend $\iota$ to a correspondence from 
edges of $\widehat{\Gamma}(L,\H_{L,Y}^r,S)$ 
to paths of $\widehat{\Gamma}(G,\H,\emptyset)$ as follows: 
\[
\iota:L\times S\ni (l,s)\mapsto lp_s, 
\]
\begin{align*}
\iota:[l,v(l(L\cap y_jHy_j^{-1}))]&\mapsto lp_{y_j}[ly_j, v(ly_jH)], \\
\iota:[v(l(L\cap y_jHy_j^{-1})),l]&\mapsto [v(ly_jH),ly_j]ly_jp_{y_j}^{-1}.
\end{align*}
The above correspondence $\iota$ gives a faithful correspondence
from each arc of $\widehat{\Gamma}(L,\H_{L,Y}^r,S)$ between two elements of $L$
to a path of $\widehat{\Gamma}(G,\H,\emptyset)$.
Removing subpaths of the form $[ly_j, v(ly_jH)][v(ly_jH),ly_j]$ 
from the resulting path, we have an arc by choice of $Y$. 
Such an arc has at most twice length of the original arc. 
Note that the image of $\iota$, that is, 
\[
\bigcup_{l\in L} l\{(\bigcup_{s\in S}p_s )
\cup (\bigcup_{L\cap y_jHy_j^{-1}\in \H_{L,Y}^r} p_{y_j}[y_j,v(y_jH)])
\cup (\bigcup_{L\cap y_jHy_j^{-1}\in \H_{L,Y}^r} [v(y_jH),y_j]y_jp_{y_j}^{-1})\}
\]
is a tree. 
Hence the resulting arc is a geodesic. 
Thus the identity on $L$ is a quasi-isometry from $(L, d_{S\sqcup\mathcal H_{L,Y}^r})$ to 
$L$ with the induced metric by the graph $\widehat{\Gamma}(G,\H,\emptyset)$.
By Theorem \ref{qc-undist}, $L$ is quasiconvex relative to $\H$ in $G$.
\end{proof}

\subsection{The case where the family of subgroups is finite}\label{finite}
We deal with the case where $\H$ is finite.

First we give a corollary of Theorem \ref{4'}.
\begin{Cor}
In the setting in Theorem \ref{4'}, suppose that $\H$ is finite. 
Then $L$ is hyperbolic relative to a finite family of infinite subgroups of $L$
\[
\H_{L,Y}^{r,\infty}:=\{K\in \H_{L,Y}^r \ |\ K\text{ is infinite} \}.
\] 
Moreover 
$\H_{L,Y}^{r,\infty}$ represents conjugacy classes of 
\[
\{
K\subset L \ |\ K=L\cap gH'g^{-1}\text{ for some }H'\in \H, g\in G, 
K\text{ is infinite} 
\}.
\]
\end{Cor}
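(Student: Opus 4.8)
The plan is to prove the two assertions in turn: relative hyperbolicity with respect to $\H_{L,Y}^{r,\infty}$, and then that this family is a complete set of conjugacy representatives.

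Since $\H$ and $Y$ are both finite, $\H_{L,Y}^r$ is a finite family, and hence so is $\H_{L,Y}^{r,\infty}$. By Theorem~\ref{4'} the group $L$ is hyperbolic relative to $\H_{L,Y}^r$, so the first task is to discard the finitely many finite members of $\H_{L,Y}^r\setminus\H_{L,Y}^{r,\infty}$, for which I would verify condition (i) of Proposition~\ref{relhypeq}. Fix a finite relative generating system $S$ of $(L,\H_{L,Y}^r)$ and put $T:=S\sqcup\bigsqcup_K(K\setminus\{1\})$, the union taken over the finitely many finite $K\in\H_{L,Y}^r\setminus\H_{L,Y}^{r,\infty}$; then $T$ is a finite relative generating system of both $(L,\H_{L,Y}^r)$ and $(L,\H_{L,Y}^{r,\infty})$. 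By Lemma~\ref{choiceX} the graph $\widehat{\Gamma}(L,\H_{L,Y}^r,T)$ is hyperbolic and fine, and Condition $(a)$ holds automatically since $\H_{L,Y}^{r,\infty}$ is finite (Remark~\ref{aut}). Passing to $\widehat{\Gamma}(L,\H_{L,Y}^{r,\infty},T)$ only un-cones the finite cosets $lK$, each of which already has diameter at most $1$ in the latter graph because its nontrivial elements lie in $T$; thus the two graphs are quasi-isometric, so one is hyperbolic if and only if the other is (as in Lemma~\ref{choiceX}(ii)), and replacing each edge labeled by a finite peripheral with the corresponding cone-biedge gives a length-at-most-doubling, finite-to-one map on circuits, which transfers fineness by a circuit count as in Lemma~\ref{BCP&(*)}. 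Proposition~\ref{relhypeq} then yields that $L$ is hyperbolic relative to $\H_{L,Y}^{r,\infty}$.

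For the \emph{Moreover} clause, each $K\in\H_{L,Y}^{r,\infty}$ is by definition an infinite $L\cap y_jHy_j^{-1}$, hence lies in the displayed set. Conversely, given an infinite $K=L\cap gH'g^{-1}$, the plan is to $L$-conjugate it into $\H_{L,Y}^{r,\infty}$. If $g\in Ly_{j_0}$, write $g=ly_{j_0}$, so $K=l(L\cap y_{j_0}H'y_{j_0}^{-1})l^{-1}$ with $L\cap y_{j_0}H'y_{j_0}^{-1}$ infinite; taking the least index $i^{\ast}$ with $L\cap y_{i^{\ast}}H'y_{j_0}^{-1}\neq\emptyset$ and applying Lemma~\ref{teq'} exhibits $L\cap y_{i^{\ast}}H'y_{i^{\ast}}^{-1}$ as an $L$-conjugate of $K$, and composing two intersection representatives shows (from minimality of $i^{\ast}$) that no earlier index is $H'$-related to $i^{\ast}$, so this subgroup lies in $\H_{L,Y}^r$, hence in $\H_{L,Y}^{r,\infty}$. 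If instead $g\notin\bigcup_jLy_j$, I would enlarge $Y$ to $Y':=Y\cup\{g\}$ (still with distinct right cosets); the same reduction produces an infinite $K'\in\H_{L,Y'}^r$ that is $L$-conjugate to $K$, and the \emph{Moreover} clause of Theorem~\ref{4'}, which says $\H_{L,Y'}^r\setminus\H_{L,Y}^r$ consists of finite subgroups, forces the infinite $K'$ to lie in $\H_{L,Y}^r$, hence in $\H_{L,Y}^{r,\infty}$.

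Finally, to see that $\H_{L,Y}^{r,\infty}$ is irredundant, I would argue that distinct members are non-conjugate in $L$: if $l(L\cap y_jHy_j^{-1})l^{-1}=L\cap y_kH'y_k^{-1}$ is infinite, this subgroup sits inside the two maximal parabolics $(ly_j)H(ly_j)^{-1}$ and $y_kH'y_k^{-1}$ of $G$, and since distinct maximal parabolics of a relatively hyperbolic group meet in a finite subgroup (\cite{Osi06a}) these two parabolics coincide; the minimality encoded in the definition of $\H_{L,Y}^r$ then forces $j=k$ and $H=H'$. The step I expect to be the main obstacle is exactly the surjectivity in the case $g\notin\bigcup_jLy_j$ together with this uniqueness-of-maximal-parabolic input, since both require translating between the coset combinatorics packaged in $Y$ and the intrinsic parabolic structure of $G$; by contrast, the first assertion and the containment are routine.
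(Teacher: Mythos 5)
Your proposal is correct, and its core coincides with the paper's own proof: both assertions are reduced to Theorem \ref{4'} --- the first by absorbing the finitely many finite members of $\H_{L,Y}^r$ into the generating system (the paper simply asserts this, while you verify condition (i) of Proposition \ref{relhypeq}; note you could shortcut the fineness transfer, since $\widehat{\Gamma}(L,\H_{L,Y}^{r,\infty},T)$ is a subgraph of the fine graph $\widehat{\Gamma}(L,\H_{L,Y}^r,T)$) --- and the surjectivity of the conjugacy correspondence by adjoining the new conjugating element to $Y$ and invoking the \emph{Moreover} clause of Theorem \ref{4'} to force any infinite new reduced peripheral back into $\H_{L,Y}^r$. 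The paper runs this second step as a one-line contradiction and leaves the index-minimization via Lemma \ref{teq'} implicit, whereas you carry it out directly; the argument is the same. The one genuinely additional piece in your write-up is the closing irredundancy claim (distinct members of $\H_{L,Y}^{r,\infty}$ are pairwise non-conjugate in $L$), which the paper does not prove at all and which rests on almost-malnormality of maximal parabolics from Osin --- a fact not established in this paper's (possibly uncountable) setting, so you should either cite it with care or read ``represents conjugacy classes'' as asserting only that every class is hit, which is all the paper's proof delivers.
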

\begin{proof}
Since $\H_{L,Y}^r\setminus\H_{L,Y}^{r,\infty}$ is a finite family of finite subgroups, 
$L$ is hyperbolic relative to $\H_{L,Y}^{r,\infty}$ by Theorem \ref{4'}. 
Assume that there exist $y'\in G$ and $H'\in \H$
such that $L\cap y'H'y'^{-1}$ is infinite and is not conjugate to 
any element of $\H_{L,Y}^{r,\infty}$ in $L$.
Then all elements of $Y':=Y\sqcup \{y'\}$ belong to mutually different 
right cosets of $L$ in $G$ and thus 
$\H_{L,Y'}^r\setminus \H_{L,Y}^r$ is a finite family of finite subgroups
by Theorem \ref{4'}.
It contradicts choice of $y'$.
\end{proof}

The following is \cite[Definition 1.3]{MP-W11a}:
\begin{Def}\label{MPW}
Let a group $G$ be hyperbolic relative to a finite family $\H$ of infinite subgroups.  
A subgroup $L$ is {\it quasiconvex relative to} $\H$ in $G$ 
if for some $(G,\H)$-graph $\Gamma$, there exists a nonempty connected and 
quasi-isometrically embedded subgraph $\Lambda$ of $\Gamma$
that is $L$-invariant and has finitely many $L$-orbits of edges.
\end{Def}

\begin{Prop}
Let $G$ be a group with a finite family $\H$ of infinite subgroups.
Then relative quasiconvexity in the sense of Definition \ref{rqc} is 
equivalent to relative quasiconvexity in the sense of Definition \ref{MPW}.
\end{Prop}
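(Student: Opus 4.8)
The plan is to route both definitions through relative undistortedness. By Theorem \ref{qc-undist}, relative quasiconvexity in the sense of Definition \ref{rqc} is equivalent to $L$ being undistorted relative to $\H$, so it suffices to show that Definition \ref{MPW} characterises the same class of subgroups when $\H$ is finite. Throughout I would fix a finite relative generating system $X$ of $(G,\H)$; since $\H$ is finite and $G$ is hyperbolic relative to $\H$, the coned-off Cayley graph $\widehat{\Gamma}(G,\H,X)$ is a $(G,\H)$-graph (by \cite[Proposition 4.3]{MP-W11a}, as in the proposition following Definition \ref{bow}), and by \cite{MP-W11a} relative quasiconvexity in the sense of Definition \ref{MPW} is independent of the chosen $(G,\H)$-graph, so I may work with $\Gamma=\widehat{\Gamma}(G,\H,X)$. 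Note also that Condition $(b)$ holds automatically, $\H$ being finite.

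First I would prove that Definition \ref{rqc} implies Definition \ref{MPW}. If $L$ is quasiconvex relative to $\H$ in the sense of Definition \ref{rqc}, then Theorem \ref{qc-undist} makes $L$ undistorted, and I fix a finite $Y$ (in distinct right $L$-cosets) and a finite relative generating system $S$ of $(L,\H_{L,Y}^r)$ as furnished by Lemma \ref{red'}. The $L$-equivariant map $\iota$ constructed in the proof of Theorem \ref{4'}, from $\widehat{\Gamma}(L,\H_{L,Y}^r,S)$ into $\widehat{\Gamma}(G,\H,X\sqcup Y\sqcup Y^{-1}\sqcup S)$, has image a connected, $L$-invariant subgraph $\Lambda$ with finitely many $L$-orbits of edges, because $S$, $Y$ and $\H$ are finite. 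That $\Lambda$ is quasi-isometrically embedded would follow by combining the undistortedness of $L$ with Lemma \ref{red'}, Lemma \ref{dict}(i) and Lemma \ref{choiceX}(i): these identify, up to quasi-isometry, the intrinsic metric of $\widehat{\Gamma}(L,\H_{L,Y}^r,S)$ on $L$ with the metric induced from $\widehat{\Gamma}(G,\H,X)$, and the cone-vertices of $\Lambda$ lie within distance one of $L$. Transporting $\Lambda$ along the quasi-isometry $\widehat{\Gamma}(G,\H,X\sqcup Y\sqcup Y^{-1}\sqcup S)\to\widehat{\Gamma}(G,\H,X)$ of Lemma \ref{choiceX}(i) then yields the subgraph required by Definition \ref{MPW}.

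For the converse I would use an orbit-map argument. Given a connected, $L$-invariant, quasi-isometrically embedded $\Lambda\subset\Gamma=\widehat{\Gamma}(G,\H,X)$ with finitely many $L$-orbits of edges, I first adjoin to $\Lambda$ the $L$-orbit of a fixed finite path from $1$ to a vertex of $\Lambda$; this changes neither connectedness, $L$-invariance, nor finiteness of edge-orbits, and preserves the quasi-isometric embedding up to constants, so I may assume $1\in\Lambda$. Then the non-cone vertices of $\Lambda$ form $LY$ for a finite $Y\ni 1$ of $L$-orbit representatives, and the orbit map $L\to\Lambda$ is a quasi-isometry onto the $L$-vertices for a relative word metric $d_{S\sqcup\mathcal H_{L,Y}}$ (the relative Milnor--\v Svarc phenomenon, with the infinite vertex stabilisers $L\cap gHg^{-1}$ as peripheral subgroups). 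Since $d_{\widehat{\Gamma}(G,\H,X)}$ is $(2,0)$-quasi-isometric to $d_{X\sqcup\mathcal H}$ by Lemma \ref{dict}(i), the inclusion $(L,d_{S\sqcup\mathcal H_{L,Y}})\hookrightarrow(G,d_{X\sqcup\mathcal H})$ is quasi-isometric, so $L$ is undistorted relative to $\H$ and Theorem \ref{qc-undist} concludes. Alternatively, one verifies the criterion of Proposition \ref{rundqc} directly, taking for each $l\in L$ the image in $\overline{\Gamma}(G,\H,X)$, via Lemma \ref{dict}, of a $\Lambda$-geodesic from $1$ to $l$, whose phase vertices lie in $L\cup LY$.

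I expect the main obstacle to be the careful identification of the intrinsic metric of $\Lambda$ (equivalently, the $L$-orbit map) with the relative word metric $d_{S\sqcup\mathcal H_{L,Y}}$, keeping track of the cone-vertices and their stabilisers $L\cap gHg^{-1}$, together with the transitions among the three graphs $\widehat{\Gamma}(L,\H_{L,Y}^r,S)$, $\widehat{\Gamma}(G,\H,X)$ and $\overline{\Gamma}(G,\H,X)$ so that the vertex-containment in $L\cup LY$ and the quasigeodesic and local-minimality properties are all preserved; the appeal to independence of Definition \ref{MPW} from the chosen $(G,\H)$-graph also needs to be justified from \cite{MP-W11a}.
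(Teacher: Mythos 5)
Your forward direction is essentially the paper's: both push $\widehat{\Gamma}(L,\H_{L,Y}^r,S)$ into $\widehat{\Gamma}(G,\H,X\sqcup Y\sqcup Y^{-1}\sqcup S)$ via the map $\iota$ from the proof of Theorem \ref{4'} and observe that the image is connected, $L$-invariant, has finitely many $L$-orbits of edges, and is quasi-isometrically embedded by relative undistortedness; your extra step of ``transporting'' $\Lambda$ back to $\widehat{\Gamma}(G,\H,X)$ is unnecessary (Definition \ref{MPW} only asks for \emph{some} $(G,\H)$-graph, and the enlarged coned-off Cayley graph is one) and, taken literally, would need edges replaced equivariantly by paths since the quasi-isometry of Lemma \ref{choiceX}(i) goes the wrong way to induce a subgraph. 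The converse is where you genuinely diverge: the paper reduces to a coned-off Cayley graph via \cite[Proposition 4.3 and Theorem 2.14]{MP-W11a} and then imports the proof of \cite[Proposition 4.15]{MP-W11a}, substituting Proposition \ref{k-similar} for their Proposition 4.4, to get pre-quasiconvexity directly; you instead run an orbit-map argument to show $L$ is undistorted relative to $\H$ and invoke Theorem \ref{qc-undist}. Your route has the advantage of staying inside this paper's toolkit (Proposition \ref{rundqc} and Theorem \ref{qc-undist} are proved here), and your second variant --- pushing a $\Lambda$-geodesic from $1$ to $l$ through Lemma \ref{dict} to get a locally minimal quasigeodesic without backtracking in $\overline{\Gamma}(G,\H,X)$ with all vertices in $L\cup LY$, then applying Proposition \ref{rundqc} --- is actually the cleaner of your two options, since it bypasses the ``relative Milnor--\v{S}varc'' identification of the intrinsic metric of $\Lambda$ with $d_{S\sqcup\mathcal H_{L,Y}}$, which you correctly flag as the costly step and which you only assert. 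The price of your approach is that the initial reduction from an arbitrary $(G,\H)$-graph to a coned-off Cayley graph still has to be borrowed from \cite{MP-W11a} in either treatment, so neither proof is fully self-contained there.
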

\noindent
Since $\H$ is finite, 
Condition $(b)$ is automatically satisfied. 
This was known for the case where $G$ is countable
(see \cite[Theorem 1.7]{MP-W11a}).

\begin{proof}
Suppose that $L$ is quasiconvex relative to $\H$ in $G$ in the sense of Definition \ref{rqc}.
As in Proof of Theorem \ref{4'}, we consider 
the image of $\widehat{\Gamma}(L,\H_{L,Y}^r,S)$ 
in $\widehat{\Gamma}(G,\H,X\sqcup Y\sqcup Y^{-1}\sqcup S)$ by the correspondence $\iota$. 
It is a nonempty connected and 
quasi-isometrically embedded subgraph of $\widehat{\Gamma}(G,\H,X\sqcup Y\sqcup Y^{-1}\sqcup S)$
that is $L$-invariant and has finitely many $L$-orbits of edges.

Suppose that $L$ is quasiconvex relative to $\H$ in $G$ in the sense of Definition \ref{MPW}.
Then we have a nonempty connected and 
quasi-isometrically embedded subgraph $\Lambda$ of 
a coned-off Cayley graph 
that is $L$-invariant and has finitely many $L$-orbits of edges
by \cite[Proposition 4.3 and Theorem 2.14]{MP-W11a}.
We can apply the proof of \cite[Proposition 4.15]{MP-W11a} to our case 
by using Proposition \ref{k-similar}
instead of \cite[Proposition 4.4]{MP-W11a}.  
\end{proof}


\end{document}